\definecolor{darkgreen}{rgb}{0,0.5,0}
\definecolor{darkblue}{rgb}{0,0,0.8}
\definecolor{darkred}{rgb}{0.8,0,0}
\definecolor{lightblue}{rgb}{0,0.6,0.8}
\DeclareFontFamily{U}{wncy}{}
\DeclareFontShape{U}{wncy}{m}{n}{<->wncyr10}{}
\DeclareSymbolFont{mcy}{U}{wncy}{m}{n}
\DeclareMathSymbol{\Sha}{\mathord}{mcy}{"58}
\theoremstyle{plain}
\newtheorem{theorem}{Theorem}[subsection]
\newtheorem{lemma}[theorem]{Lemma}
\newtheorem{proposition}[theorem]{Proposition}
\newtheorem{conjecture}[theorem]{Conjecture}
\newtheorem{remark}[theorem]{Remark}
\theoremstyle{definition}
\newtheorem{definition}[theorem]{Definition}
\newtheorem{assumption}[theorem]{Assumption}
\theoremstyle{remark}
\crefname{theorem}{Theorem}{Theorems}
\crefname{lemma}{Lemma}{Lemmata}
\crefname{corollary}{Corollary}{Corollaries}
\crefname{proposition}{Proposition}{Propositions}
\crefname{definition}{Definition}{Definitions}
\crefname{conjecture}{Conjecture}{Conjectures}
\crefname{question}{Question}{Questions}
\crefname{example}{Example}{Examples}
\crefname{algorithm}{Algorithm}{Algorithms}
\crefname{remark}{Remark}{Remarks}
\crefname{assumption}{Assumption}{Assumptions}
\def\ol#1{\overline{#1}}
\def\Alphabet{A,B,C,D,E,F,G,H,I,J,K,L,M,N,O,P,Q,R,S,T,U,V,W,X,Y,Z}
\def\alphabet{a,b,c,d,e,f,g,h,i,j,k,l,m,n,o,p,q,r,s,t,u,v,w,x,y,z}
\def\endpiece{xxx}
\def\makeAlphabet[#1]{\expandafter\makeA#1,xxx,}
\def\makealphabet[#1]{\expandafter\makea#1,xxx,}
\def\makeA#1,{\def\temp{#1}\ifx\temp\endpiece\else%
	\mkbb{#1}\mkfrak{#1}\mkbf{#1}\mkcal{#1}\mkscr{#1}\mkbs{#1}\expandafter\makeA\fi}%
\def\makea#1,{\def\temp{#1}\ifx\temp\endpiece\else\mkfrak{#1}\mkbf{#1}\mkbs{#1}\expandafter\makea\fi}%
\def\mkbb#1{\expandafter\def\csname bb#1\endcsname{\mathbb{#1}}}
\def\mkfrak#1{\expandafter\def\csname fr#1\endcsname{\mathfrak{#1}}}
\def\mkbf#1{\expandafter\def\csname b#1\endcsname{\mathbf{#1}}}
\def\mkcal#1{\expandafter\def\csname c#1\endcsname{\mathcal{#1}}}
\def\mkscr#1{\expandafter\def\csname s#1\endcsname{\mathscr{#1}}}
\def\mkbs#1{\expandafter\def\csname bs#1\endcsname{{\boldsymbol{#1}}}}
\def\makeop[#1]{\xmakeop#1,xxx,}
\def\mkop#1{\expandafter\def\csname #1\endcsname{{\mathrm{#1}}}} %
\def\xmakeop#1,{\def\temp{#1}\ifx\temp\endpiece\else\mkop{#1}\expandafter\xmakeop\fi}%
\def\makeup[#1]{\xmakeup#1,xxx,}
\def\mkup#1{\expandafter\def\csname #1\endcsname{{\mathrm{#1}\,}}} %
\def\xmakeup#1,{\def\temp{#1}\ifx\temp\endpiece\else\mkup{#1}\expandafter\xmakeup\fi}%
\newcommand{\F}{\mathbf{F}}
\newcommand{\Q}{\mathbf{Q}}
\newcommand{\Qbar}{\ol\Q}
\newcommand{\Z}{\mathbf{Z}}
\newcommand{\fX}{\mathfrak{X}}
\newcommand{\D}{\mathbf{D}}
\newcommand{\fp}{\mathfrak{p}}
\newcommand{\eps}{\varepsilon}
\renewcommand{\epsilon}{\varepsilon}
\renewcommand{\theta}{\vartheta}
\renewcommand{\phi}{\varphi}
\newcommand{\mathup}[1]{\text{\textup{#1}}}
\renewcommand{\H} {\ensuremath{\mathup{H}}}
\newcommand{\GalQ}{G_\Q}
\newcommand{\Char}{\operatorname{Char}}
\newcommand{\defn}{\colonequals}
\newcommand{\defeq}{\colonequals}
\newcommand{\iso}{\simeq}
\newcommand{\isom}{\cong}
\newcommand{\inj}{\hookrightarrow}
\numberwithin{equation}{section}
\begin{document}
\title[TNC for newforms at Eisenstein primes]{On the Tamagawa number conjecture for newforms at Eisenstein primes}
\author{Mulun Yin}
\address{(M. Yin) Morningside Center of Mathematics, No.55, Zhongguancun East Road, Beijing, 100190, China}
\email{mulunyin@amss.ac.cn}
\date{\today}

\begin{abstract}
    We extend the results of~\cite{CGLS} to higher weight modular forms and prove a rank $0$ Tamagawa Number Conjecture formula (also known as the Bloch--Kato conjecture) for modular forms at good Eisenstein primes, under some technical assumption on periods. Under standard hypotheses (i.e. the injectivity of the $p$-adic Abel-Jabobi map and the non-degeneracy of the Gillet--Soul\'e height pairing), we also discuss some partial results towards a rank $1$ result. A conditional higher weight $p$-converse theorem to Gross--Zagier--Zhang--Kolyvagin--Nekovář is also obtained as a consequence of the anticyclotomic Iwasawa Main Conjectures.
\end{abstract}

\maketitle

\tableofcontents

\section*{Introduction}
\renewcommand{\thetheorem}{\Alph{theorem}}

\subsection{Background}In~\cite{CGLS}, Castella, Grossi, Lee and Skinner proved two anticyclotomic Iwasawa main conjectures for elliptic curves at good ordinary Eisenstein primes over an imaginary quadratic field under certain hypotheses. Together with a rank $0$ BSD formula obtained by Greenberg and Vatsal in~\cite{GV00}, they proved a rank $1$ BSD formula for elliptic curves over $\bQ$ in the residually reducible setting. Most of their results have since been generalized to higher weight modular forms in~\cite{KY24}, and several hypotheses have been removed in \textit{op. cit.} (e.g. the condition that $\phi|_{G_p},\psi|_{G_p}\ne \mathbf{1},\omega$ where $\phi,\psi$ are the characters appearing in the semisimplification of $E[p]$, $\omega$ is the mod-$p$ cyclotomic character and $G_p$ denotes the decomposition group at $p$) as well as in~\cite{CGS} (e.g. assumptions on the characters coming from~\cite{GV00}). 

In~\cite{KY24}, the Iwasawa Main Conjectures in~\cite{CGLS} have been extended to higher weight modular forms, but a BSD formula is only obtained for elliptic curves. In this paper, we will further extend this application to modular forms of certain weights greater than $2$ and prove the Tamagawa Number Conjecture in rank $0$ as well as a higher weight $p$-converse theorem, under technical assumptions. One immediately notes that in the higher weight setting, there are multiple $L$-values of interest, namely those at $1,2,...,r$ where $k=2r$ is the weight. The rank $0$ results concerning all the values $L(f,s)$ with a strict weight range will be obtained conditionally on certain cyclotomic Iwasawa Main Conjectures for $f$, while rank $1$ results about the central value $L'(f,r)$ can be further obtained from anticyclotomic Iwasawa theory, provided a Gross--Zagier type formula for generalized Heegner cycles over $\Gamma_1(N)$ becomes available. Unfortunately, we won't be able to say anything about other values of the derivative since Zhang's formula only concerns $L'(f,r)$.

\subsection{The main results}\label{results}
Let $f\in S_{2r}^{new}(\Gamma_0(N))$ be a newform of weight $2r\geq 2$ and level $N$ with trivial Nebentypus. Let $\bQ(f)$ be the coefficient field of $f$, i.e., the finite extension of $\bQ$ generated by the Fourier coefficients $\{a_n(f)\}_{n\geq 1}$ of $f$, with ring of integers $\bZ(f)$. Fix an odd prime $p\nmid N$ such that $a_p$ is a $p$-adic unit. Equivalently, this means $p$ is a prime of good ordinary reduction for $f$. Let $F$ be a finite extension of the completion of $\bQ(f)$ at a chosen place $\fp$ above $p$ with ring of integers $\cO$, and denote by
\[
\rho_f(1-r): \Gal(\Qbar/\Q) \to  \GL_2(F)
\]
the self-dual Tate twist of the $p$-adic Galois representation $\rho_f$ attached to $f$ (dual to Deligne's construction). We denote from now by $V_f$ the self-dual representation attached to $f$ as above. When necessary, we will also study $\rho_f$ for the representation before self-dual twist. Let $\F$ be the residual field of $F$. We assume $p$ is an Eisenstein prime for $f$, meaning that the residual representation $\ol\rho_f$ is reducible. Then the self-dual twist gives rise to a decomposition
\[\ol\rho_f^\ss(1-r) \isom \F((\epsilon\omega^{r-1})\omega) \oplus \F((\epsilon\omega^{r-1})^{-1}).\] We denote characters occurring in $\ol\rho_f^\ss(1-r)$ by $\phi$ and $\psi$, so $\phi\psi=\omega$.

Let us briefly explain the ideas that go into the proof of our first two main results~\cref{pConvintro} and~\cref{rank0}. Recall that we only consider Eisenstein primes for $f$, meaning that the semisimplification of the residual representation \[\ol\rho_f^{ss}=\F(\phi)\oplus\F(\psi)\] is reducible. Equivalently, there is an extension \begin{equation}\label{chars}
	0\to\F(\phi)\to\ol\rho_f\to\F(\psi)\to 0.\end{equation}
By an abuse of notation, we assume $\ol\rho_f$ is self-dual so that $\phi\psi=\omega$.
However, to make sense of the residual representation, one needs to make a choice of a Galois stable lattice $T_f$ in $V_f$. Unlike in the residually irreducible case, such choice is not unique up to homothety. Fortunately, both the Iwasawa main conjecture and the BSD conjecture (see~\cite{KY24}) are invariant under isogeny, so we are free to choose any lattice (by Ribet's lemma, this amounts to a choice of the ordering of the characters appearing in~\eqref{chars}). Actually, a choice of the lattice plays a crucial role in the proof of their main results. For our application, it is sufficient to choose the `canonical' lattice (see~\cref{pAJ/N}).

A key input in this paper is an anticyclotomic Iwasawa main conjecture for modular forms proved in~\cite{KY24}. Let $K$ be an imaginary quadratic field and $\Gamma_K=\Gal(K_\infty/K)$ be the Galois group of the anticyclotomic $\bZ_p$-extension of $K$. Let $\Lambda_K=\cO\llbracket\Gamma_K\rrbracket$ be the Iwasawa algebra and let $\Lambda_K^{\nr}\coloneq\Lambda_K\hat{\otimes}_{\bZ_p}\bZ_p^{\nr}$, for $\bZ_p^{\nr}$ the completion of the ring of integers of the maximal unramified extension of $\bQ_p$. The \textit{(Greenberg's) Iwasawa Main Conjecture} we need takes on the following form:
\begin{conjecture}\label[conjecture]{imcintro}
	Let $f\in S^{new}_{2r}(\Gamma_0(N))$ be a newform of weight $2r\geq 2$ and $p\nmid 2N$ be an Eisenstein prime of good ordinary reduction for $f$. If $K$ is an imaginary quadratic field satisfying the Heegner hypothesis where $p$ splits, then $\fX_f$ is $\Lambda_K$-cotorsion, and
	\[\Char(\fX_f)\Lambda_K^{\nr}=(\cL_f^\BDP)\] as ideals in $\Lambda_K^{\nr}$
\end{conjecture}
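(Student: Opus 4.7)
The plan is to invoke the main theorem of~\cite{KY24}, which extends the Eisenstein-prime anticyclotomic main conjecture of~\cite{CGLS} from the weight-$2$ (elliptic curve) case to higher weight modular forms. Since the statement is the key input for everything that follows, I sketch its strategy here. The core idea is to exploit the reducibility of $\ol\rho_f$ to reduce the main conjecture for $f$ to Iwasawa main conjectures for Hecke characters of $K$, which are already known.

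The first step is to fix the `canonical' lattice $T_f$ (see~\cref{pAJ/N}), yielding the short exact sequence~\eqref{chars}. Taking the long exact sequence in Galois cohomology over the anticyclotomic tower, together with a careful matching of Greenberg local conditions at the split prime $p$, exhibits $\fX_f$ as an extension of Iwasawa modules built from the characters $\phi$ and $\psi$. Up to twist, these two pieces are anticyclotomic Selmer groups of finite-order characters of $G_K$; by Kummer theory they are controlled by anticyclotomic class groups of $K$ and of its ring class fields, whose Iwasawa-theoretic structure is governed by Rubin's theorem and its refinements (e.g.\ Hida--Tilouine).

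On the analytic side, one uses the interpolation properties of $\cL_f^\BDP$ via the $p$-adic Abel--Jacobi images of generalized Heegner cycles on Kuga--Sato varieties. A direct computation modulo the Eisenstein ideal factors $\cL_f^\BDP$ as (a unit times) a product of Katz two-variable $p$-adic $L$-functions attached to $\phi$ and $\psi$. The $\Lambda_K$-cotorsion of $\fX_f$ then follows from the non-vanishing of $\cL_f^\BDP$ (a Washington-type statement) combined with one divisibility produced by the Euler system of generalized Heegner cycles. Comparing characteristic ideals through the Eisenstein-congruence factorization yields the equality after extending scalars to $\Lambda_K^\nr$; the base change is forced by the CM periods appearing in the interpolation formula.

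The main obstacle, and the principal new work in~\cite{KY24}, is integrality in higher weight. Ribet's lemma fixes the lattice only up to finitely many isomorphism classes, and a wrong choice breaks the matching of local conditions at $p$; moreover, the Euler system of generalized Heegner cycles and the Kolyvagin--Nekov\'a\v{r} bounds on $\fX_f$ must be refined so that all congruences remain integral after reduction modulo the Eisenstein ideal. Once these integral refinements are in place, the two resulting divisibilities combine to yield the sharp equality.
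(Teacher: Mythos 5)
Your sketch matches the route the paper actually takes (via \cite{KY24}): one divisibility from the Heegner-class Kolyvagin-system argument in the Perrin-Riou formulation, transferred to the BDP formulation by the equivalence of (IMC1) and (IMC2), and then upgraded to an equality by matching the $\mu$- and $\lambda$-invariants of $\fX_f$ and $\cL_f^\BDP$ against those of the characters $\phi,\psi$ via the Eisenstein congruences and Rubin's main conjecture. The only caveat worth flagging is that the statement is known only for $r$ odd and under \cref{assum}, a restriction your write-up omits.
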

Here the left hand side is the characteristic ideal of a certain Selmer group $\fX_f$ for $f$ and the right hand side is an associated $p$-adic $L$-function which lives in $\Lambda_K^{\nr}$. The Heegner hypothesis states that every prime dividing $N$ is split in $K$. 

Under~\cref{assum}, ~\cref{imcintro} is now a theorem in~\cite{KY24} if $r$ is odd. In fact, it is further shown that the above conjecture is equivalent to a \textit{Heegner Point Main Conjecture}.

\begin{conjecture}\label{HPMC}
	Let $f\in S^{new}_{2r}(\Gamma_0(N))$ be a newform of weight $2r\geq 2$ and $p\nmid 2N$ be an Eisenstein prime of good ordinary reduction for $f$, and let $K$ be an imaginary quadratic field satisfying the Heegner hypothesis where $p$ splits. Then both $S$ and $X$ have $\Lambda_K$-rank one, and\[
	\Char_{\Lambda_K}(X_\tors)=\Char_{\Lambda_K}(S/\Lambda_K\cdot\kappa^\Heeg)^2,
	\]
	where $X_\tors$ denote the $\Lambda$-torsion submodule of $X$.
\end{conjecture}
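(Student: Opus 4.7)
The plan is to derive Conjecture~\ref{HPMC} from Conjecture~\ref{imcintro} (which is a theorem in~\cite{KY24} for odd $r$) by establishing the equivalence of the two main conjectures. The overall strategy mirrors the equivalence between the Greenberg and Heegner Point Iwasawa main conjectures in the residually irreducible setting, as developed by Howard, Wan, and Castella--Wan, but must be adapted to the residually reducible situation by making use of the canonical lattice described in Section~\ref{pAJ/N}.

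First, I would exploit the self-duality of $V_f$ to put $S$ and the discrete Selmer group whose Pontryagin dual is $X$ into perfect duality, both cut out by compatible Greenberg-type local conditions at $p$ (using that $p$ splits in $K$). I would then verify that $\kappa^\Heeg$ is non-$\Lambda_K$-torsion in $S$, so that $S$ has $\Lambda_K$-rank one, and deduce that $X$ has $\Lambda_K$-rank one by a global Euler characteristic / Poitou--Tate calculation. Here the Heegner hypothesis and the $p$-split assumption on $K$ are essential to make the local terms at primes dividing $Np$ behave as expected and to ensure that the anticyclotomic sign of the functional equation forces generic corank one.

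Next, I would invoke global Poitou--Tate duality on the anticyclotomic tower $K_\infty/K$ to relate $X_\tors$ to $S/\Lambda_K\cdot\kappa^\Heeg$, with the discrepancy measured by the image of $\loc_{\bar\fp}(\kappa^\Heeg)$ in local cohomology at the split prime $\bar\fp$ above $p$. Applying the Perrin-Riou big logarithm after extending scalars to $\Lambda_K^{\nr}$, the explicit reciprocity law of Bertolini--Darmon--Prasanna and Castella--Hsieh for generalized Heegner cycles identifies this image with $\cL_f^\BDP$ up to a unit in $(\Lambda_K^{\nr})^\times$. Combining this identification with the Poitou--Tate exact sequence converts the IMC equality $\Char(\fX_f)\Lambda_K^{\nr} = (\cL_f^\BDP)$ into the squared equality $\Char_{\Lambda_K}(X_\tors) = \Char_{\Lambda_K}(S/\Lambda_K\cdot\kappa^\Heeg)^2$; the square appears because $\kappa^\Heeg$ enters symmetrically on the compact and the discrete sides of the duality.

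The main obstacle will be the explicit reciprocity law in the residually reducible higher-weight setting. The statement itself is due to Bertolini--Darmon--Prasanna and was sharpened by Castella--Hsieh (with integral refinements by Kobayashi), but one must verify that it respects the canonical-lattice normalization of $T_f$ and that $\kappa^\Heeg$ has nontrivial reduction modulo $\fp$, so that neither side of the comparison is annihilated by a power of $\fp$. Controlling the behaviour of $\cL_f^\BDP$ modulo $\fp$ via the Eisenstein structure of $\ol\rho_f$ and the characters $\phi,\psi$ of~\eqref{chars} — in particular ruling out spurious $\fp$-divisibility coming from the reducibility of the residual representation — is the technically delicate part of this program.
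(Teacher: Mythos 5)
Your proposal follows essentially the same route as the paper: the statement is obtained as Theorem~\ref{IMC}(IMC1) by combining the equivalence between the Heegner-point and Greenberg formulations (Proposition~\ref{imcequiv}, i.e.\ \cite[Proposition 4.2.1]{CGLS}, which rests on exactly the Poitou--Tate duality and BDP/Castella--Hsieh reciprocity-law mechanism you describe) with the Greenberg main conjecture established in~\cite{KY24}, the lattice issues being handled by Ribet's lemma as you anticipate. The only caveat is that the result is known only for $r$ odd and under Assumption~\ref{assum}, so the general statement remains a conjecture; your sketch correctly identifies the delicate points (the reciprocity law for the chosen lattice and the non-torsionness of $\kappa^\Heeg$, which in the paper follows from $\mu(\cL_f^\BDP)=0$).
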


Here $S=\H^1_{\cF_{\Lambda_K}}(K,\bT)$ and $X=\H^1_{\cF_{\Lambda_K}}(K,M_f)^\vee$ are (Pontryagin duals of) certain $\Lambda_K$-adic Selmer groups introduced in~\cite[section 3]{CGLS}, and $\kappa^\Heeg$ is a $\Lambda_K$-adic \textit{Heegner cycle} conjectured to be non-torsion. Again under~\cref{assum},~\cref{HPMC} is now a theorem when $r$ is odd.

A standard consequence of a Heegner Point Main Conjecture is a $p$-converse theorem of Gross--Zagier--Kolyvagin (see for example~\cite{KY24b}). For higher weight modular forms, the Gross--Zagier formula is extended by Zhang and the theorem of Kolyvagin is extended by Nekovář. It should be noted that the \textit{classical} Heegner cycles in Zhang's formula as well as in Nekovář's work are different from the \textit{generalized} Heegner cycles appearing in the above Heegner Point Main Conjecture. Their difference is understood well enough to yield the $p$-converse theorem, yet a complete proof of the rank $1$ Tamagawa Number formula remains mysterious. 

Let $L(f,s)$ be the $L$-function of the Galois representation $\rho_f$. The \textit{analytic rank} of $f$ is it's order of vanishing at $s=r$ (which agrees with the order of vanishing of the $L$-function $L(V_f,s)$ at $s=1$ after self-dual twist). Let $A_f$ be defined by the following short exact sequence\[
0\to T_f\to V_f\to A_f\to 0,\]
and let $\H^1_\BK(K,A_f)$ be the Bloch--Kato Selmer group for $A_f$ over $K$. Let \[\AJ^f_K:\CH^{r/2}(\tilde{\cE}^{2r-2}(N)/K)_0\otimes \cO\to\H^1_{\cts}(K,A_f)\]
be the $p$-adic Abel--Jacobi map attached to the Kuga--Sato variety $\tilde{\cE}^{2r-2}(N)$ of level $N$ and weight $2r$ (see~\cref{2} for more discussion about this map). Then by work of Nekovář (see for example~\cite[Proposition 11.2]{Nek92}), there is an exact sequence\begin{equation}\label{AJdescent}
0\to \im(\AJ^f_K)\otimes \bQ_p/\bZ_p \to \H^1_\BK(K,A_f)\to \Sha_\Nek(f/K)\to 0
\end{equation}
which defines $\Sha_\Nek(f/K)$, necessarily a $p$-primary group. The \textit{algebraic rank} of $f$ is the $\bZ_p$-corank of the first group. We mention that $\Sha_\Nek(f/K)$ is generally different from the Tate--Shafarevich group defined a l\`a Bloch--Kato as the quotient $\Sha_\BK(f/K)\defeq\H^1_\BK(K,A_f)/\H^1_\BK(K,A_f)_{\div}$ (see~\cite[Remark 8.2]{Mas}). However note that $\im(\AJ^f_K)\otimes \bQ_p/\bZ_p$ maps injectively into $\H^1_\BK(K,A_f)_\div$, and hence there is a surjection $\Sha_\Nek(f/K)\twoheadrightarrow \Sha_\BK(f/K)$, the latter always being finite whenever $\H^1_\BK(K,A_f)$ has finite $\cO$-corank. When the $\frp^\infty$-part of the classical Tate--Shafarevich group $\Sha(A_f/K)[\frp^\infty]=\Sha(A_f/K)[p^\infty]\otimes_{\bZ(f)}\cO$ is finite, there is an equality $\Sha_\BK(f/K)=\Sha(A_f/K)[\frp^\infty]$.

Zhang's Gross--Zagier formula~\cite{Zhang1997} and Nekovář's theorem~\cite{Nek92} generalize Gross--Zagier--Kolyvagin theorem to modular forms. Namely, whenever the analytic rank is less than $1$, it's equal to the algebraic rank. The higher weight $p$-converse theorem is the first main result of this paper. See~\cref{pconv}.

\begin{theorem}\label{pConvintro}
		Let $f\in S^{new}_{2r}(\Gamma_0(N))$ be a newform of weight $2r\geq 2$ where $r$ is odd and $p\nmid 2N$ be an Eisenstein prime of good ordinary reduction for $f$. Assume the Gillet--Soul\'e pairing is non-degenerate and all Abel--Jacobi maps are injective. Let $t\in\{0,1\}$. Then\[
		\corank_{\bZ_p}(\H^1_{\BK}(\bQ,A_f))=t\Rightarrow \ord_{s=r}L(f/\bQ,s)=t,
		\]
		and so $\dim_F(\im(\AJ^f_\bQ)\otimes\bQ)=t$ and $\#\Sha_\Nek(f/\bQ)[p^\infty]<\infty$.
\end{theorem}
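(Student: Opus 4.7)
The plan is to deduce the $p$-converse from the Heegner Point Main Conjecture \cref{HPMC}, which is a theorem of \cite{KY24} under the running hypotheses since $r$ is odd, following the Skinner--Zhang paradigm adapted to this higher weight residually reducible setting. The essential ingredients will be: (i) a Friedberg--Hoffstein-type non-vanishing of quadratic twists of $L(f,s)$ (for the $t=1$ case) or of its first derivative (for the $t=0$ case); (ii) Kato's Euler system divisibility to bound the Bloch--Kato Selmer group of the twist; (iii) Zhang's Gross--Zagier formula for higher weight modular forms; and (iv) Nekov\'a\v r's descent sequence~\eqref{AJdescent}, into which the hypotheses on the Abel--Jacobi map and the Gillet--Soul\'e pairing feed.

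First I would choose an imaginary quadratic field $K$ satisfying the Heegner hypothesis for $N$, in which $p$ splits, and such that $\ord_{s=r}L(f^K,s) = 1-t$; a Friedberg--Hoffstein-type non-vanishing result supplies such a $K$. Combined with Kato's theorem applied to $f^K$ when $t=1$, or with Zhang--Nekov\'a\v r applied to $f^K$ over an auxiliary Heegner field when $t=0$, this forces $\corank_{\bZ_p}\H^1_{\BK}(\bQ, A_{f^K}) = 1-t$. A standard restriction--corestriction decomposition then gives $\corank_{\bZ_p}\H^1_{\BK}(K, A_f) = 1$. Next, a Greenberg-style control theorem lifts this to the statement $\mathrm{rank}_{\Lambda_K} X = 1$, and invoking \cref{HPMC} forces the $\Lambda_K$-adic Heegner class $\kappa^\Heeg$ to be non-torsion, so its specialization at the trivial character is a non-zero class in $\H^1(K, V_f)$.

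The final step is to compare this specialized generalized Heegner class with the $p$-adic Abel--Jacobi image of a classical Heegner cycle in the Kuga--Sato variety $\tilde{\cE}^{2r-2}(N)$ to which Zhang's formula applies. Once that comparison is in hand, Zhang's Gross--Zagier formula yields $L'(f/K, r) \ne 0$, and factoring $L(f/K, s) = L(f, s)\, L(f^K, s)$ together with the arranged order of vanishing of $L(f^K, s)$ at $s = r$ pins down $\ord_{s=r} L(f, s) = t$. The concluding statements on $\dim_F(\im(\AJ^f_\bQ) \otimes \bQ)$ and the finiteness of $\Sha_\Nek(f/\bQ)[p^\infty]$ then follow from~\eqref{AJdescent} together with the Abel--Jacobi injectivity and Gillet--Soul\'e non-degeneracy hypotheses. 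I expect the main obstacle to be precisely this bridge between generalized and classical Heegner cycles in higher weight: it is the technical crux where the injectivity and non-degeneracy hypotheses genuinely intervene, allowing the $\Lambda_K$-adic non-triviality supplied by \cref{HPMC} to transfer to non-triviality of an Abel--Jacobi image that Zhang's formula can read.
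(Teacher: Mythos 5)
Your proposal is correct and follows essentially the same route as the paper: an auxiliary $K$ chosen via Friedberg--Hoffstein with $\ord_{s=r}L(f^K,s)=1-t$, the Zhang--Nekov\'a\v r input to get $\corank_{\bZ_p}\H^1_\BK(K,A_f)=1$, the Greenberg-type control theorem \cref{anticon} feeding into \cref{IMC}(IMC1) to force $\kappa_\infty$ non-torsion, the bridge from generalized to classical Heegner cycles, and finally \cref{GZZ} plus the factorization $L(f/K,s)=L(f,s)L(f^K,s)$. The crux you correctly flag --- transferring non-triviality from the $\Lambda_K$-adic class to a cycle Zhang's formula can read --- is resolved in the paper by the injection $\H^1_{\cF_{\Lambda_K}}(K,\bT)_{\Gamma_K}\hookrightarrow\H^1_\BK(K,T_f)$ together with Thackeray's index comparisons (\cref{comp}, \cref{compBPD}) and the identification of $\AJ^f_{K_1}(\Delta_N)$ with the image of Zhang's cycle.
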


Another consequence of the anticyclotomic Iwasawa Main Conjectures over $K$, by the descent arguments of~\cite{CGS}, is a cyclotomic Iwasawa Main Conjecture over $\bQ$ as below. Let $\fX(f)\defeq \H^1_\Gr(\bQ,M_f')^\vee$ be the Pontryagin dual of the $p$-primary Selmer group for $f$ over $\bQ$ and let $\cL_f^\MSD$ be the Mazur--Swinnerton-Dyer $p$-adic $L$-function for $f$ (see\cref{cycIwa} for definitions). Here $\Lambda_\bQ\defeq\bZ_p \llbracket\Gal(\bQ_\infty/\Q)\rrbracket$ is the cyclotomic Iwasawa algebra over $\bQ$.

\begin{conjecture}
	Let $f\in S_{2r}^{new}(\Gamma_0(N))$ be a newform. Let $p\nmid N$ be an Eisenstein prime for $f$, i.e., $\ol\rho_f$ is reducible. Then 
	\[\Char_{\Lambda_\bQ}(\H^1_\Gr(\bQ,M_f')^\vee)=(\cL_f^{\MSD})\]
	as ideals in $\Lambda_\bQ$.
\end{conjecture}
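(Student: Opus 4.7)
The plan is to deduce this cyclotomic main conjecture from the anticyclotomic IMC over $K$ (\cref{imcintro}, a theorem of~\cite{KY24} for odd $r$ under~\cref{assum}) by adapting the descent strategy of~\cite{CGS} to higher weight. Fix $K$ imaginary quadratic in which $p$ splits and satisfying the Heegner hypothesis for $N$. The reducibility of $\ol\rho_f$, with semisimplification $\F(\phi)\oplus\F(\psi)$, lets both the Greenberg Selmer groups and the $p$-adic $L$-functions attached to $f$ be analyzed through rank-one objects built from the characters $\phi$ and $\psi$, in the spirit of Greenberg--Vatsal.

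The first step is to upgrade the anticyclotomic IMC to a \emph{two-variable main conjecture} over $K$ by combining the equality $\Char(\fX_f)\Lambda_K^{\nr}=(\cL_f^\BDP)$ with the cyclotomic IMC of Mazur--Wiles applied to $\phi$ and $\psi$; the Eisenstein structure matches a decomposition of the two-variable Selmer group with a decomposition of the two-variable $p$-adic $L$-function, up to controlled error from Eisenstein congruences. Specializing this two-variable main conjecture to the cyclotomic $\bZ_p$-extension of $K$---via a standard control theorem and an interpolation identity linking the two-variable $p$-adic $L$-function with the Mazur $p$-adic $L$-function of $f$ over $K$---yields the cyclotomic main conjecture for $f$ over $K$.

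Finally, descend from $K$ to $\bQ$: the factorization $L(f/K,s) = L(f,s)\,L(f\otimes\eta_K,s)$ passes to $p$-adic $L$-functions as a product of $\cL_f^{\MSD}$ and $\cL_{f\otimes\eta_K}^{\MSD}$ up to units, while Shapiro's lemma furnishes the matching algebraic decomposition $\H^1_\Gr(K,M_f') \isom \H^1_\Gr(\bQ,M_f') \oplus \H^1_\Gr(\bQ,M_{f\otimes\eta_K}')$. Applying the $K$-level cyclotomic main conjecture with a second auxiliary imaginary quadratic field, chosen to separate $f$ from its twist, then isolates the desired equality over $\bQ$.

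The principal obstacle is the first step. For weight two,~\cite{CGS} carry out the necessary Eisenstein-congruence analysis and a careful accounting of local Tamagawa factors at primes dividing $N$; pushing this through in general weight $2r$ requires higher-weight analogues of those congruence computations, and matching the anticyclotomic specialization of the two-variable $p$-adic $L$-function with $\cL_f^\BDP$ requires a nontrivial compatibility that is already delicate in weight two in the Eisenstein setting. Everything downstream---the specialization in Step~2 and the descent in Step~3---is then essentially formal modulo standard control theorems and a careful bookkeeping of local error terms.
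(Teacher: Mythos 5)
The statement you are trying to prove is stated in the paper as a \emph{conjecture}, and the paper does not prove it: immediately after stating it, the author explains that a generalization of the descent arguments of~\cite{CGS} to higher weight \emph{would} yield it, but that ``currently only a direct generalization of~\cite{GV00} is available.'' What the paper actually establishes are restricted versions (\cref{cycMC} and \cref{cycMCselfdual}): under the extra hypotheses that $2r\leq p-1$, $r$ is odd, and the sub/quotient characters of $\ol\rho_f$ satisfy the Greenberg--Vatsal-type ramification/parity condition~\eqref{GV'}, and even then only as an equality in $\Lambda_\bQ\otimes\bQ_p$ (i.e.\ up to powers of $p$), because the comparison of the periods $\Omega_f^+$ and $\Omega_g^+$ is unresolved. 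The method there is not a descent from the anticyclotomic IMC at all; it is a direct Greenberg--Vatsal/Hirano argument comparing Iwasawa invariants on both sides via the Eisenstein congruence $f\equiv g\equiv G\pmod{\pi}$ together with Kato's divisibility (and \cref{intdiv} for integrality).

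Your proposal is therefore not a proof but a strategy outline, and you concede as much: the ``principal obstacle'' you name in your first step --- upgrading to a two-variable main conjecture over $K$ and carrying out the higher-weight Eisenstein-congruence analysis in the style of~\cite{CGS} --- is precisely the missing ingredient that the paper identifies and defers to future work. Beyond that acknowledged gap, two further issues would need to be confronted even if Step~1 were available. First, the branch problem: in weight $2r>2$ the cyclotomic main conjecture relevant to the central value is the $(p-r)$-th branch of Kato's equivariant conjecture, not the $i=0$ branch, and relating $\cL_f^{\MSD}$ (equivalently $\cL_f^{\MTT}(\langle\chi_\cyc\rangle^r\cdot-)$) to any anticyclotomic or two-variable object requires the twisting bookkeeping of \cref{Rubintwist} and the self-dual twist $\rho_f(1-r)$, none of which appears in your sketch. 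Second, the $\mu$-invariant/integrality problem: in the Eisenstein higher-weight setting one does not know that $\cL_f^{(p-r)}$ is even integral without an argument like \cref{intdiv}, and the unknown valuation of $\Omega_f^+/\Omega_g^+$ blocks the passage from an equality in $\Lambda\otimes\bQ_p$ to an equality of ideals in $\Lambda_\bQ$, which is what the conjecture asserts. So the equality you aim for cannot currently be reached by either route without resolving the period comparison.
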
	
 This conjecture was first proved in~\cite{GV00} for elliptic curves under technical assumptions that force the Iwasawa $\mu$-invariants of both sides to vanish. Later it was extended in~\cite{CGS} based on~\cite{CGLS} to allow positive $\mu$-invariants using new ideas, with weaker technical conditions, again for elliptic curves. It is now an unconditional theorem for elliptic curves by the modification of~\cite{CGLS} in~\cite{KY24}. A generalization of the results in ~\cite{CGS} to higher weight modular forms would yield an unconditional proof of this conjecture. However, currently only a direct generalization of~\cite{GV00} is available. If one in addition assumes that the modular form has weight $2r\leq p-1$, the above conjecture is partially proved in~\cite{Hir18} under the assumptions in~\cite{GV00}. See~\cref{cycMC}. However, the IMC proved in~\cite{Hir18} is not the right one we need due to `branch issues' (see~\cref{cycIwa}). One aim of this paper is to derive an appropriate generalization of~\cite{GV00} based on~\cite{Hir18} that is good for a rank $0$ Tamagawa Number Conjecture, under some technical assumptions.

If we assume that $\phi|_{G_p},\psi|_{G_p}\ne \mathbf{1},\omega$, then we can apply the control theorems for higher weight modular forms in~\cite{LV21}. Note that a consequence of this additional hypothesis is that $\H^0(\bQ,A_f)=0$. Similar to~\eqref{AJdescent}, there is a short exact sequence\begin{equation}\label{AJdescentQ}
	0\to \im(AJ^f_\bQ)\otimes \bQ_p/\bZ_p\to \H^1_\BK(\bQ,A_f)\to \Sha_\Nek(f/\bQ) \to 0
	\end{equation}
that defines $\Sha_\Nek(f/\bQ)$. As a corollary of the cyclotomic Iwasawa Main Conjecture and the cyclotomic control theorem, we obtain the $p$-part Tamagawa Number formula in the rank $0$ case. See~\cref{pTNC}.

\begin{theorem}[$p$-part Tamagawa Number Conjecture in rank $0$]\label{rank0}
	Let $f\in S_{2r}^{new}(\Gamma_0(N))$ be a newform of weight $2r\leq p-1$ where $p\nmid 2N$ is an Eisenstein prime for $f$, i.e., $\ol\rho_f$ is reducible, and $r$ is odd. Assume that the sub-representation $\F(\phi)$ of $\ol\rho_f$ is ramified at $p$ and even when restricted to the decomposition group. Further assume that $\ord_p(\frac{\Omega^+_f}{\Omega^+_g})\geq 0$, where $g$ is the eigenform obtained from $f$ by removing all Euler factors at primes dividing $N$. Assume $L(f,r)\ne 0$. We have
	\[\ord_p(\frac{L(f,r)}{\Omega_f})=\ord_p(\#\Sha_\Nek(f/\bQ)\#\Tam(A_f/\bQ))\]
	Here $\Omega_f=\Omega^+_f$ (and similarly $\Omega^+_g$) is the period attached to $f$ as in~\cref{cycpL}.
\end{theorem}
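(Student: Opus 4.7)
The plan is a standard Iwasawa-theoretic descent from a cyclotomic Iwasawa main conjecture down to a finite-level Bloch--Kato formula, combining three ingredients: the cyclotomic IMC over $\bQ$ for $f$, the cyclotomic control theorem of~\cite{LV21}, and the interpolation formula for $\cL_f^{\MSD}$. The first step is to upgrade the partial result of~\cite{Hir18} to the IMC $\Char_{\Lambda_\bQ}(\fX(f))=(\cL_f^{\MSD})$ on the branch corresponding to $f$, by redoing the Greenberg--Vatsal descent of~\cite{GV00} in the higher-weight Eisenstein setting. The weight hypothesis $2r\le p-1$ confines us to the Fontaine--Laffaille range where the needed $p$-adic Hodge-theoretic comparisons behave, and the parity assumption on $r$ is what allows one to feed in the anticyclotomic IMC of~\cite{KY24} to power the descent from $K$ down to $\bQ$.

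Next, specialize both sides of the IMC at the augmentation ideal $\mathfrak{a}\subset\Lambda_\bQ$. On the analytic side, the interpolation property of $\cL_f^{\MSD}$ delivers $L(f,r)/\Omega_f$ up to explicit local factors at $p$ which are $p$-adic units by the good-ordinary hypothesis combined with $L(f,r)\ne 0$. On the algebraic side, apply the cyclotomic control theorem of~\cite{LV21}; its hypotheses hold precisely because the assumption that $\phi$ is ramified and even on $G_p$ implies $\phi|_{G_p},\psi|_{G_p}\ne\mathbf{1},\omega$, which simultaneously forces $\H^0(\bQ,A_f)=0$ and kills the potentially obstructive kernel and cokernel terms in the descent sequence. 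This identifies the Pontryagin dual of $\fX(f)/\mathfrak{a}\fX(f)$, up to Tamagawa correction terms at primes $\ell\mid N$, with a Greenberg Selmer group for $A_f$ over $\bQ$.

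To convert the Greenberg side into Bloch--Kato, observe that $L(f,r)\ne 0$ together with the previous step forces $\H^1_\BK(\bQ,A_f)$ to be finite, so the sequence~\eqref{AJdescentQ} collapses to an isomorphism $\H^1_\BK(\bQ,A_f)\simeq\Sha_\Nek(f/\bQ)$; meanwhile, the discrepancy between the Greenberg and Bloch--Kato local conditions at primes $\ell\mid N$ accounts for exactly the factor $\#\Tam(A_f/\bQ)$. Reading off $p$-adic valuations on both sides of the specialized IMC then yields the claimed formula.

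The main obstacle I foresee is the period comparison. The $p$-adic $L$-function $\cL_f^{\MSD}$ is naturally normalized using a period $\Omega_g^+$ attached to the $N$-depleted form $g$, whereas the Tamagawa formula is stated using the canonical period $\Omega_f^+$ of $f$ itself; the ratio $\Omega_f^+/\Omega_g^+$ must have non-negative $p$-adic valuation in order for the argument to land on the correct side of the equality. This is exactly the content of the technical hypothesis $\ord_p(\Omega_f^+/\Omega_g^+)\ge 0$, and the fact that it has to be assumed rather than verified intrinsically is what restricts the conclusion to a one-sided $p$-adic valuation identity and tethers the theorem to newforms subject to this congruence-theoretic constraint.
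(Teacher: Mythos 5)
Your overall architecture matches the paper's proof of \cref{pTNC}: finiteness of $\H^1_\BK(\bQ,A_f)$, the cyclotomic control theorem of~\cite{LV21} (whose hypothesis $a_p(f)\not\equiv 1$ you correctly trace to the condition on $\phi|_{G_p}$), a Greenberg--Vatsal/Hirano-style cyclotomic main conjecture, the interpolation formula at the trivial character, the Tamagawa factors arising from the unramified-versus-Bloch--Kato discrepancy at $\ell\mid N$, and the period hypothesis supplying the missing integrality input.

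Two points, however, need correction. First, the cyclotomic IMC used here is \emph{not} obtained by feeding in the anticyclotomic IMC of~\cite{KY24} and descending from $K$ to $\bQ$: the paper explicitly notes that the \cite{CGS}-style descent is unavailable in higher weight, and instead proves the needed IMC by combining Kato's divisibility with an Iwasawa-invariant comparison \`a la~\cite{GV00} and~\cite{Hir18}. Moreover, the relevant main conjecture is not the one~\cite{Hir18} proves (the $i=0$ branch, whose specialization sees $L(f,1)$) but the $(p-r)$-th branch, which only after twisting by $\langle\chi_\cyc\rangle^{1-r}$ via \cref{Rubintwist} governs the central value $L(f,r)$; establishing this branch, together with the integral divisibility of \cref{intdiv}, is the main new technical content of the paper and cannot be glossed as ``the branch corresponding to $f$.'' Second, some smaller slips: the Euler factor $(1-p^{r-1}/\alpha_p)^2$ is a unit for $r>1$ trivially and for $r=1$ because the Eisenstein hypothesis forces $\alpha_p\equiv a_p(f)\not\equiv 1\pmod{p}$ --- not because of good ordinarity combined with $L(f,r)\ne 0$; finiteness of $\H^1_\BK(\bQ,A_f)$ is a \emph{hypothesis} of the control theorem, so it must be supplied up front (the paper quotes Kato's theorem via~\cite{LV23}) rather than deduced circularly from ``the previous step''; and the conclusion is a two-sided equality of valuations, not a one-sided inequality --- the role of \eqref{per} together with \cref{intdiv} is precisely to pin down the $\mu$-invariant and upgrade Hirano's equality in $\Lambda\otimes\bQ_p$ to an equality in $\Lambda$.
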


We remark that $\frac{\Omega^+_f}{\Omega^+_g}$ is conjectured to be a $p$-adic unit, and it is known in the case of elliptic curves (see~\cite[Lemma 3.6]{GV00}).

We also prove an anticyclotomic control theorem that is good for a rank $1$ Tamagawa Number formula. However, due to the lack of a Gross--Zagier--Zhang type formula and insufficient understanding of the relation between the generalized Heegner cycles and the $L$-function, we will not try to prove the rank $1$ formula here.

Under the hypotheses in~\cref{control}, one has the following.

\begin{theorem}[Anticyclotomic Control Theorem]\label{acon}
		Let $f^\Sigma_{ac}$ be a generator of the characteristic ideal $\Char_{\Lambda_K}(X^\Sigma_{ac}(M_f))$ of the torsion $\Lambda_K$-module $X^\Sigma_{ac}(M_f)$, then\begin{equation*}\label{Antcon}
		\#\cO/f^\Sigma_{ac}(0)=\frac{\#\Sha_\BK(f/K)\cdot C^\Sigma(A_f)}{(\#\H^0(K,A_f))^2}(\#\delta_v)^2,
	\end{equation*}
where $C^\Sigma(A_f)$ and $\delta_v$ are in~\cref{control} accounting for Tamagawa numbers, local and global index depending on a choice of a Heegner cycle.
\end{theorem}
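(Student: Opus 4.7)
The plan is to establish a Mazur-style control theorem relating the specialization of $X^\Sigma_{ac}(M_f)$ at $\gamma-1$ to the finite-level Bloch--Kato Selmer data over $K$, and then apply the standard Euler characteristic formula
\[
\#\cO/f_X(0)=\frac{\#X_{\Gamma_K}}{\#X^{\Gamma_K}}
\]
for a finitely generated torsion $\Lambda_K$-module $X$ with $f_X(0)\neq 0$. By Pontryagin duality, setting $Y\defeq X^\Sigma_{ac}(M_f)^\vee$ (the discrete $\Sigma$-imprimitive Iwasawa Selmer group for $A_f$), this is equivalent to computing the $\Gamma_K$-Euler characteristic $\chi(\Gamma_K, Y)=\#Y^{\Gamma_K}/\#Y_{\Gamma_K}$ and matching it with the claimed expression.

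The key technical step is to set up the fundamental control diagram
\[
\begin{tikzcd}[column sep=small]
0 \arrow[r] & \H^1_{\cF^\Sigma_\BK}(K, A_f) \arrow[r] \arrow[d,"s"] & \H^1_\Sigma(K, A_f) \arrow[r] \arrow[d,"g"] & \displaystyle\bigoplus_{v\in\Sigma}\frac{\H^1(K_v,A_f)}{L_v} \arrow[d,"\oplus h_v"] \\
0 \arrow[r] & Y^{\Gamma_K} \arrow[r] & \H^1_\Sigma(K_\infty, A_f)^{\Gamma_K} \arrow[r] & \displaystyle\bigoplus_{w}\left(\frac{\H^1(K_{\infty,w},A_f)}{L_w}\right)^{\Gamma_K}
\end{tikzcd}
\]
and apply the snake lemma. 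Since $\Gamma_K\cong\bZ_p$ has cohomological dimension one, Hochschild--Serre identifies $\ker(g)\cong\H^1(\Gamma_K, A_f^{G_{K_\infty}})$, whose order is $\#\H^0(K, A_f)$, and forces $\coker(g)=0$. The local kernels $\ker(h_v)$ split by place: at primes $v\in\Sigma$ of residue characteristic other than $p$, Tate local duality and the explicit local conditions convert them into the Tamagawa-type factors assembling into the $\Sigma$-imprimitive product $C^\Sigma(A_f)$; at the two primes $\fp,\bar\fp$ above $p$ (which split in $K$ by the Heegner hypothesis), comparison of the Greenberg ordinary condition with the Bloch--Kato condition, combined with the normalization of the $\Lambda_K$-adic Heegner cycle, contributes a factor of $\#\delta_v$ at each. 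The coinvariants $Y_{\Gamma_K}$ are controlled by the Poitou--Tate dual of the same diagram, or via a Jannsen-type Euler characteristic argument for Iwasawa cohomology, producing the extra factor of $\#\H^0(K,A_f)$ that yields the exponent $2$ in the denominator.

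The hardest step will be the local computation at $\fp$ and $\bar\fp$: matching the Greenberg local condition defining $\cF^\Sigma_{ac}$ with the Bloch--Kato condition used in the definition of $\Sha_\BK(f/K)$, while simultaneously tracking the local index $\delta_v$ attached to the fixed Heegner cycle generator, requires a careful application of Tate local duality along with the explicit form of the ordinary filtration $F^\pm A_f$ in the Eisenstein setting, where the graded pieces decompose via the characters $\phi,\psi$ appearing in the semisimplification $\ol\rho_f^\ss$. Combining the snake lemma identity with the Euler characteristic formula then yields
\[
\#\cO/f^\Sigma_{ac}(0)=\frac{\#\Sha_\BK(f/K)\cdot C^\Sigma(A_f)}{(\#\H^0(K,A_f))^2}(\#\delta_v)^2,
\]
where $\Sha_\BK(f/K)$ enters through the identification of $\H^1_{\cF^\Sigma_\BK}(K, A_f)$ modulo its maximal divisible subgroup with the Bloch--Kato Tate--Shafarevich group, the divisible part being forced to vanish by the hypothesis $f^\Sigma_{ac}(0)\neq 0$.
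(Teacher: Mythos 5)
There is a genuine gap: you have modeled this on a rank-zero, Greenberg/Mazur-style control theorem with Bloch--Kato local conditions on both rows, but the statement is a Jetchev--Skinner--Wan-type theorem for the \emph{anticyclotomic} Selmer structure, which imposes the relaxed condition at one prime $v\mid p$ and the strict (zero) condition at the conjugate prime $\ol v$. This asymmetry is what makes $X^\Sigma_{ac}(M_f)$ torsion and $f^\Sigma_{ac}(0)\neq 0$ even though, under the standing hypotheses (i)--(iv) of \cref{control} (which your argument never invokes), $\H^1_\BK(K,A_f)$ has $\cO$-corank one. Consequently your closing claim that the divisible part of $\H^1_{\cF^\Sigma_\BK}(K,A_f)$ "is forced to vanish by $f^\Sigma_{ac}(0)\neq 0$" is false in the intended setting: the divisible part is exactly $\im(\AJ_K)\otimes \bQ_p/\bZ_p$, of corank one, and the whole point of the theorem is to convert that corank-one piece into the quantity $(\#\delta_v)^2$.

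Relatedly, $\delta_v$ is not a local kernel $\ker(h_v)$ in a snake-lemma diagram and does not arise "once at each of $\fp,\ol\fp$." It is the index of the localization map $\loc_v\colon \H^1_f(K,T_f)_{/\tors}\to \H^1_f(K_v,T_f)_{/\tors}$ between two $\cO$-modules of rank one, normalized against a chosen Heegner cycle $C$ via the Bloch--Kato logarithm; the square comes from Poitou--Tate global duality pairing the relaxed condition at $v$ against the strict condition at $\ol v$ (equivalently, from the functional-equation symmetry of the anticyclotomic line), not from two independent local contributions. The missing step in your argument is precisely this global duality computation comparing $\H^1_{\cF_{ac}}(K,A_f)$ (which is finite) with $\H^1_\BK(K,A_f)$ (which is not), carried out in \cite[Section 3]{JSW2017} and adapted to the residually reducible case in \cite[Appendix B]{KY24} and \cite[Theorems 8.1--8.2]{Thackeray2022}; the paper's proof proceeds by citing exactly these sources rather than by the diagram you propose. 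Your Euler-characteristic identity $\#\cO/f_X(0)=\#X_{\Gamma_K}/\#X^{\Gamma_K}$ and the treatment of the primes away from $p$ (Tamagawa factors assembling into $C^\Sigma(A_f)$) are fine as far as they go, but without the anticyclotomic local conditions and the rank-one duality step the formula cannot be reached.
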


Finally, we also do some computations towards the verification of a rank $1$ Tamagawa Number formula in~\cref{rank1}.

\subsection{Methods of proof and outline of the paper}
As this work is a direct generalization of the results in~\cite{CGLS}, the main ideas of proofs are similar. However, as the situations are more mysterious for higher weight modular forms than for elliptic curves and less is known, we often need to first establish analogues of existing results, or do some extra verification and comparison that are unnecessary in weight $2$.

In~\cref{iwasawa}, we review some background knowledge from Iwasawa theory. In particular, we discuss the anticyclotomic Iwasawa Main Conjectures proved in~\cite{KY24} and a cyclotomic Iwasawa Main Conjecture in the style of~\cite{GV00} for higher weight modular forms, for a different `branch' than those studies in the existing literature (e.g.,~\cite{SU14} and~\cite{Hir18}). In particular, we also prove Kato's integral divisibility for modular forms at Eisenstein primes under some conditions. These will be the foundation for our proofs of the Tagamawa Number formulas. We also discuss how to use the Main Conjectures before self-dual twist to study the Tamagawa Number Conjectures after self-dual twist.

In~\cref{2}, we review the constructions of Heegner cycles and $p$-adic Abel--Jacobi maps in two different settings. The first setting is based on Kuga--Sato varieties over the modular curve $X_1(N)$, where we have the theory of Bertolini--Darmon--Prasana that relates the Abel--Jacobi image of Heegner cycles over $\Gamma_1(N)$ to their $p$-adic $L$-functions. This provides a $p$-adic Gross--Zagier formula we will need to study a rank $1$ Tamagawa Number formula. On the other hand, there is a second setting where everything is defined over the congruence subgroup $\Gamma(N)$. The combined work of Zhang~\cite{Zhang1997} and Nekovář~\cite{Nek92} provides a generalization of the classical Gross--Zagier--Kolyvagin's theorem to higher weight modular forms, where the Heegner points for elliptic curves are replaced by the Heegner cycles over $\Gamma(N)$, while no analogue is known for $\Gamma_1(N)$. As far as the Tamagawa Number conjecture is concerned, it should not matter which kind of Heegner cycles we choose, as they only show up in intermidiate steps. However, due to the above asymmetric situation, we do not have enough tools to unite them. Nevertheless, we mention a few comparison results of Thackery~\cite{Thackeray2022} that relate the indices of the Heegner cycles in the Abel--Jacobi images that is good enough for a $p$-converse theorem.

In~\cref{section:control}, we discuss three control theorems with exact formulae for modular forms, one cyclotomic and two anticyclotomic (one of Greenberg type and one of Jetchev--Skinner--Wan type, see \cref{acon}). The cyclotomic control theorem over $\bQ$ is good for the rank $0$ Tamagawa Number Conjecture and is the main result of~\cite{LV21}, while the anticyclotomic control theorem over $K$ of Jetchev--Skinner--Wan type is needed for the rank $1$ result and is discussed in~\cite{Thackeray2022} in the irreducible setting. We explain how to adapt these results to the residually reducible case. On the other hand, the control theorem of Greenberg type will be needed for the $p$-converse theorem. Since we do not have access to a general cyclotomic main conjecture, it is sufficient to consider the cyclotomic control theorem where there is no global torsion, i.e. when $\H^0(\bQ_p,A_f)=0$. However, the anticyclotomic control theorems do allow non-trivial global torsion.

In~\cref{main}, we first prove the rank $0$ Tagamawa Number formula (\cref{rank0}) under a technical assumption on periods of certain modular forms by combining the Main Conjectures from~\cref{iwasawa} and the control theorem from~\cref{section:control}. This result is also necessary for a rank $1$ formula. Then we prove a $p$-converse theorem (\cref{pConvintro}) to the theorem of Gross--Zagier--Zhang--Kolyvagin--Nekovář. In doing so, we need to choose some auxiliary imaginary quadratic fields $K$ where the Iwasawa Main Conjectures hold, and we also need to compare different Heegner cycles to related the global $L$-functions to the $p$-adic ones. Finally, we compute some specific $p$-indices appearing in the anticyclotomic control theorem using Fontaine--Laffaille theory, and study the compatibility of all computational results towards the rank $1$ Tamagawa Number formula.

\subsection{Relation to previous works}
Our main results are based on the Iwasawa Main Conjectures studie by several authors. The cyclotomic Iwasawa Main Conjecture for elliptic curves in the good Eisenstein case was first proved in~\cite{GV00} and generalized for some higher weight modular forms in~\cite{Hir18}. The anticyclotomic Main Conjectures for elliptic curves in the good Eisenstein case was first proved in~\cite{CGLS} and generalized for some higher weight modular forms in~\cite{KY24}.

Prior to this work, control theorems for modular forms have been studied in~\cite{LV21} (cyclotomic) and~\cite{JSW2017} (anticyclotomic). We explain how to adapt them in the Eisenstein case.

A $p$-converse theorems for modular forms in the irreducible setting was given in~\cite{LV23}. Their approach is based on Kolyvagin's Conjecture and is different from ours.

Tamagawa Number conjecture formula in rank $0$ and $1$ for motives of modular forms are discussed in~\cite{LV23}. A higher weight BSD formula in rank $1$ is also obtained in~\cite{Thackeray2022}. All these results are in the residually irreducible setting.

\subsection{Notations}
For any subextension $L/\Q$ of $\bar{\bQ}/\bQ$, we let $G_L\coloneq \Gal(\bar{\bQ}/L)$ denote its absolute Galois group. For a cohomology group $\H^i(\cdot,-)$, we write $\H^i(L,-)$ in place of $\H^i(G_L,-)$.

\subsection{Future Work}
A generalization of~\cite{CGS} to higher weight modular forms would hopefully help remove the technical assumptions in~\cite{GV00} and~\cite{Hir18}. In particular, elliptic curves with non-trivial torsion groups will be covered and therefore we hope to extend the cyclotomic control theorem from~\cite{LV21} to allow torsion as well. It would also be interesting to study the canonical periods for related modular forms so that one might obtain an unconditional proof of the rank $0$ $p$-part Tamagawa Number Conjectures by directly using the arguments in~\cite{Hir18}.

On the other hand, since a rank $1$ result on the $p$-part Tamagawa Number Conjecture is desired, we hope to examine a Gross--Zagier type formula for Heegner cycles defined over $\Gamma_1(N)$.

\subsection{Acknowledgment}
This is partially a part of the author's Ph. D. thesis. We thank his advisor Francesc Castella for his guidance.

\renewcommand{\thetheorem}{\arabic{section}.\arabic{subsection}.\arabic{theorem}}

\section{Iwasawa theory}\label{iwasawa}

To state the main results in~\cite{KY24} that yield a proof of~\cref{imcintro} in the introduction, we first recall the algebraic side and the analytic side of the anticyclotomic Iwasawa theory.

Let $K \subset \Qbar$ be an auxiliary imaginary quadratic field in which $p = v\ol{v}$ splits, with $v$ the prime of $K$ above $p$ induced by $\iota_p$. We also fix an embedding $\iota_\infty: \Qbar \inj \bC$.

Let $G_K = \Gal(\Qbar/K) \subset \GalQ \defn \Gal(\Qbar/\Q)$, and for each place $w$ of $K$ let $I_w \subset G_w \subset G_K$ be the corresponding inertia and decomposition groups. Let $\Frob_w \in G_w/I_w$ be the arithmetic Frobenius. For the prime $v \mid p$, we assume $G_v$ is chosen so that it is identified with $\Gal(\Qbar_p/\Q_p)$ via $\iota_p$.

Recall that $F/\bQ_p$ is a finite extension of $\bQ_p$ containing the Fourier coefficients of $f$. Let $\cO$ denote its ring of integers and $\F$ be its residue field. Denote by $\frp$ a prime ideal of $\cO$ lying above $p$. Let $\Gamma \defn \Gal(K_\infty/K)$ be the Galois group of the anticyclotomic $\Z_p$-extension $K_\infty$ of $K$, and let $\Lambda_K \defn \cO\llbracket\Gamma\rrbracket$ be the anticyclotomic Iwasawa algebra. We shall often identify $\Lambda_K$ with the power series ring $\cO\llbracket T\rrbracket$ by setting $T = \gamma - 1$ for a fixed topological generator $\gamma \in \Gamma$. 

 Throughout, we assume the Heegner hypothesis\begin{equation*}\tag{Heeg}\label{heeg}\text{every prime }l\text{ dividing }N\text{ splits in }K.\end{equation*}

\subsection{The algebraic side}
In this section, we define the Selmer groups and discuss some of its important properties. The goal is to describe the Iwasawa theoretic results on the algebraic side which will be compared to those on the analytic side in the next section.

Let $\Sigma\supset\{v,\bar{v},\infty\}$ be a finite set of places of $K$. We define the Selmer group with \textit{unramified local conditions} for $f$ as{\small\[
	\H^1_{\cF_{ur}}(K,M_f)=\ker\bigg(\H^1(K^\Sigma/K, M_f) \to \H^1(I_v,M_f)^{G_v/I_v}\times\prod_{w\mid l\ne p, l\in\Sigma}\H^1(I_w,M_f)^{G_w/Iw}\bigg)\]}
where $K^\Sigma$ is the maximal extension of $K$ unramified outside $\Sigma$ and $M_f=T_f\otimes \Lambda_K^\vee$
For a set $S\subset\Sigma\setminus\{v,\ol v,\infty\}$, we define the $S$-imprimitive Selmer group for $f$ as
{\small\[
	\H^1_{\cF^S_{ur}}(K,M_f)=\ker\bigg(\H^1(K^\Sigma/K, M_f) \to \H^1(I_v,M_f)^{G_v/I_v}\bigg)\]}
It is proved in~\cite{KY24} that these Selmer groups are $\Lambda_K$-cotorsion and the global-to-local maps defining the Selmer groups are surjective. Let $\fX_f$ and $\fX^S_f$ denote the Pontryagin dual of the primitive and imprimitive Selmer groups for $f$ respectively.

Replacing $M_f$ with $M_f[\frp]$ in the above definitions, we also get the (primitive and imprimitive) \textit{residual Selmer groups} for $f$.

For a character $\theta:G_K\to \F^\times$ whose conductor is only divisible by primes split in $K$, define $M_\theta\coloneq \cO(\theta)\otimes_\cO \Lambda_K^\vee$. Replacing $M_f$ with $M_\theta$ in the above definitions, we also contain the Selmer groups for $\theta$.

The imprimitive residual Selmer groups allow us to compare $f$ with the characters $\phi,\psi$ appearing in the semisimplification of $\ol\rho_f$. Using the known results about the characters, it is shown in~\cite{CGLS} that the Iwasawa $\mu$-invariants of the Selmer groups are vanishing and the $\lambda$-invariant of $f$ is related to those of the characters. For example, in the setting of \textit{loc. cit.} (i.e. when $f$ corresponds to an elliptic curve, with some technical conditions), one can show that for $?=f,\phi,\psi$, \[
\lambda(\fX_?)=\dim_{\F_p} \H^1_{\cF^S_\ur}(K,M_?[p])
\]and there is a short exact sequence\[
0\to \H^1_{\cF^S_\ur}(K,M_\phi[p]) \to \H^1_{\cF^S_\ur}(K,M_f[p])\to \H^1_{\cF^S_\ur}(K,M_\psi[p])\to 0,
\] so we have the following simple relation\[
\lambda(\fX^S_f)=\lambda(\fX^S_\phi)+\lambda(\fX^S_\psi).\]
In general cases, the above relation between the $\lambda$-invariants should be satisfied, even if the above sequences may not be exact, except in one case when one of the characters is the trivial character over $G_K$. This exceptional case is studied in~\cite{KY24}, and the difference is consistent with the Iwasawa main conjecture for the trivial character which differs from others.

We record the following theorem from \textit{op.\ cit.} which compares the algebraic Iwasawa invariants of $f$ to those of the characters.

\begin{theorem}\label{algmain}
	Let $\phi,\psi$ be the characters appearing in the semisimplification of $\ol\rho_f$. If neither of them is the trivial character on $G_K$, then the module $\frX_f$ is $\Lambda_K$-torsion with $\mu(\frX_f) = 0$ and
	\[
	\lambda(\frX_f) = \lambda(\frX_\phi) + \lambda(\frX_\psi) + \sum_{w \in S}\big\{\lambda(\cP_w(\phi)) + \lambda(\cP_w(\psi)) - \lambda(\cP_w(f))\big\}.
	\]
	When $\phi|_{G_K}$ or $\psi|_{G_K}=\mathbf{1}$, the same results hold except that the relation between $\Lambda_K$-invariants now becomes\[
	\lambda(\frX_f) +1 = \lambda(\frX_\phi) + \lambda(\frX_\psi) + \sum_{w \in S}\big\{\lambda(\cP_w(\phi)) + \lambda(\cP_w(\psi)) - \lambda(\cP_w(f))\big\}.
	\]
	
\end{theorem}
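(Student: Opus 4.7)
The strategy is the Greenberg--Vatsal style passage through mod-$\frp$ residual Selmer groups: one transfers information from the characters $\phi,\psi$ (for which Iwasawa theory is known) to the modular form $f$, using the short exact sequence $0\to\F(\phi)\to\ol\rho_f\to\F(\psi)\to 0$ provided by the canonical lattice. Throughout, I would enlarge $S$ to contain every prime of $K$ outside $\{v,\ol v,\infty\}$ at which any of $\ol\rho_f,\phi,\psi$ is ramified, so that in the $S$-imprimitive setting all local conditions away from $v$ are automatically unramified.

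The first step is to record, for $?\in\{\phi,\psi\}$, that $\frX^S_?$ is $\Lambda_K$-torsion with $\mu(\frX^S_?)=0$, and that
\[\lambda(\frX^S_?)=\dim_\F \H^1_{\cF^S_{\ur}}(K,M_?[\frp]).\]
For characters this follows from Rubin-style Iwasawa theory for CM abelian extensions together with the standard Fitting-ideal / residual-Selmer interpretation of $\lambda$. The primitive-to-imprimitive comparison is the Euler-characteristic identity $\lambda(\frX^S_?)=\lambda(\frX_?)+\sum_{w\in S}\lambda(\cP_w(?))$, valid once $\mu=0$.

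The second step is the Selmer-group comparison. Tensoring $0\to\F(\phi)\to\ol\rho_f\to\F(\psi)\to 0$ with $\Lambda_K^\vee$ gives a short exact sequence $0\to M_\phi[\frp]\to M_f[\frp]\to M_\psi[\frp]\to 0$. If neither $\phi|_{G_K}$ nor $\psi|_{G_K}$ is trivial, then $\H^0(K,M_\psi[\frp])=0$, which yields left-exactness of the induced sequence on residual Selmer groups. The key input, extended from \cite{CGLS} to the higher-weight setting in \cite{KY24}, is that the imprimitive local conditions absorb the obstruction on the right, giving the short exact sequence
\[0\to\H^1_{\cF^S_{\ur}}(K,M_\phi[\frp])\to\H^1_{\cF^S_{\ur}}(K,M_f[\frp])\to\H^1_{\cF^S_{\ur}}(K,M_\psi[\frp])\to 0.\]
Additivity of $\F$-dimensions in this sequence, combined with the benchmark identity above applied also to $?=f$, forces $\mu(\frX^S_f)=0$ and $\lambda(\frX^S_f)=\lambda(\frX^S_\phi)+\lambda(\frX^S_\psi)$. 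Applying the primitive-to-imprimitive comparison to all three of $f,\phi,\psi$ and rearranging then produces the stated formula in the non-exceptional case.

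The exceptional case, where one of $\phi|_{G_K}$ or $\psi|_{G_K}$ equals $\mathbf{1}$, is the principal technical obstacle and the reason for the asymmetric $+1$ in the statement. Here $\H^0(K,M_?[\frp])$ is nontrivial of $\F$-dimension exactly one, so the long exact sequence of Selmer groups picks up a one-dimensional kernel/cokernel that shifts the $\lambda$ identity by $1$. What must be checked is (i) that this shift is exactly $1$ and not absorbed by any other correction term at the local places in $S$, and (ii) that it is compatible with the analytic side, where the main conjecture for the trivial character carries a distinguished extra zero at the augmentation ideal of $\Lambda_K$ coming from the anticyclotomic $\Z_p$-extension itself. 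Tracking this single-dimensional discrepancy cleanly through the primitive-to-imprimitive comparison, and verifying that no further defect appears when one of the characters degenerates, is the delicate part of the argument; once this is done, the remaining manipulation mirrors the non-exceptional case.
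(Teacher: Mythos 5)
Your plan follows the same Greenberg--Vatsal-style route the paper takes (and delegates to the same sources \cite{CGLS}, \cite{KY24}): the benchmark identity $\lambda(\frX^S_?)=\dim_\F\H^1_{\cF^S_\ur}(K,M_?[\frp])$, comparison of residual imprimitive Selmer groups via the lattice filtration, the imprimitive-to-primitive correction by the local factors $\cP_w$, and the $+1$ shift from the trivial character in the exceptional case. The only caveat is that the paper explicitly warns that in the general (higher-weight) setting the displayed short exact sequence of residual Selmer groups ``may not be exact'' even outside the exceptional case---only the $\lambda$-additivity survives---so your assertion of literal exactness is slightly stronger than what the cited results guarantee, though this does not affect the conclusion.
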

\begin{proof}
	This is a combination of~\cite[Section 1.5]{KY24} and~\cite[Section 1.5]{CGLS}. We mention that in the second case, the result holds no matter whether $\F(\mathbf{1})$ is a subrepresentation or a quotient representation of $\ol\rho_f$.
\end{proof}

\subsection{the analytic side}
In this section, we describe the BDP $p$-adic $L$-functions for $f$ and the Katz $p$-adic $L$-function for the characters obtained from~\cite[section 2.1]{KY24}, and discuss their useful properties. The following hypotheses are in effect throughout this section. They all come from~\cite{CGLS}.
\begin{assumption}\label[assumption]{assum}
	\begin{enumerate}
		\item $p=v\bar{v}$ is split in $K$
		\item The Heegner hypothesis
		\item The discriminant $D_K$ of $K$ is odd and $D_K\ne -3$ 
	\end{enumerate}
\end{assumption}

\subsubsection{The Bertolini--Darmon--Prasanna $p$-adic $L$-functions.}

The Heegner hypothesis allows one to fix an integral ideal $\frN \subset \cO_K$ with
\begin{equation*} \label{eq:frNHeegner}
    \cO_K/\frN \iso \Z/N.
\end{equation*}

\begin{proposition}[$p$-adic interpolation property]
	There exists an element $\cL_f^\BDP\in\Lambda_K^{ur}$ characterized by the following interpolation property: If $\hat{\xi} \in \frX_{p^\infty}$ is the $p$-adic avatar of a Hecke character $\xi$ of infinity type $(n,-n)$ with $n \geq 0$ and $p$-power conductor, then
		{\small\[
		\cL_f^\BDP(\hat{\xi}) = \frac{\Omega_p^{4n}}{\Omega_K^{4n}}\cdot \frac{4\Gamma(n+\frac{k}{2})\Gamma(n-\frac{k}{2}+1)\xi^{-1}(\frN^{-1})}{(2\pi)^{2n+1}(\sqrt{D_K})^{2n-1}}\cdot (1-a_p(f)p^{-r}\xi_{\ol \frp}(p)+\xi_{\ol \frp}(p^2)p^{-1})^2\cdot L(f/K,\xi,1).  
		\]}
\end{proposition}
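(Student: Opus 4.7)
The plan is to construct $\cL_f^{\BDP}$ as a bounded $p$-adic measure on $\Gamma_K$ using the Bertolini--Darmon--Prasanna template (extended to higher weight by Brako\v{c}evi\'c, Liu--Zhang--Zhang, and Castella, as carried out in~\cite{KY24}), and then verify the interpolation identity one character at a time.

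First, I would set up the Heegner datum. Using~\cref{heeg} and~\cref{assum}, fix an integral ideal $\frN\subset\cO_K$ with $\cO_K/\frN\iso \bZ/N\bZ$, and fix a CM elliptic curve $A/\Qbar$ with $\End A\iso\cO_K$ together with a $\Gamma_1(N)$-level structure induced by $\frN$. Because $p=v\bar v$ splits in $K$ and $A$ is ordinary at $p$, Serre--Tate theory identifies the formal completion of the ordinary locus of the Igusa tower at the chosen CM point with $\widehat{\bGm}$ over $\bZ_p^{\ur}$. This identification converts restriction-to-the-CM-point into an evaluation map sending $p$-adic modular forms to elements of $\bZ_p^{\ur}\llbracket\Gamma_K\rrbracket=\Lambda_K^{\ur}$.

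Second, I would define $\cL_f^\BDP$ by applying this evaluation procedure to the $p$-depleted Coleman primitive of $f$. Concretely, one writes the ordinary $p$-stabilisation $f_\alpha$, forms its $p$-depletion $f^{(p)}$, views it as a $p$-adic modular form on the Igusa tower, and defines
\[
\cL_f^\BDP \;\defeq\; \text{(evaluation at the CM point)}\bigl(f^{(p)}\bigr)\in\Lambda_K^{\ur}.
\]
The universal character on $\Gamma_K$ captures the action of Serre's differential operator $\theta=td/dt$, so that applying $\theta^n$ before evaluation corresponds to specialization at a character of infinity type $(n,-n)$.

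Third, for the interpolation formula I would compare, at each $\hat\xi$ of infinity type $(n,-n)$ and $p$-power conductor, the algebraic evaluation above with the classical Rankin--Selberg integral. The nearly holomorphic form $\theta^n f$ pairs under Petersson with the theta series $\theta_\xi$ attached to $\xi^{-1}$, and Waldspurger's formula expresses this pairing in terms of $L(f/K,\xi,1)$. The translation between the complex Petersson inner product and the $p$-adic evaluation introduces the period ratio $\Omega_p^{4n}/\Omega_K^{4n}$; the archimedean zeta integrals produce $\Gamma(n+k/2)\Gamma(n-k/2+1)/(2\pi)^{2n+1}(\sqrt{D_K})^{2n-1}$ after the standard unfolding; and the two local zeta integrals at $v$ and $\bar v$, combined with the $p$-depletion, account for the squared Euler-type factor $(1-a_p(f)p^{-r}\xi_{\bar\frp}(p)+\xi_{\bar\frp}(p^2)p^{-1})^2$ (the square appearing because $\xi$ is anticyclotomic, so $v$ and $\bar v$ contribute symmetric factors). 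Existence then follows from the fact that measures on $\Gamma_K$ are determined by their values on a Zariski-dense set of characters, and the set of $\hat\xi$ of infinity type $(n,-n)$ with $p$-power conductor, $n\geq 0$, is dense in the character space.

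The main obstacle is bookkeeping: keeping the normalizations of $\Omega_p,\Omega_K,\frN$ and of the level structure consistent so that the factor $\xi^{-1}(\frN^{-1})$ and the split between $\xi_{\bar\frp}(p)$ and $\xi_\frp(p)$ agree with the conventions fixed in~\cref{iwasawa}. As the construction has been carried out in~\cite[\S2.1]{KY24}, I would invoke that reference for the detailed computation and only verify that the normalisations used here coincide with those of \emph{loc.\ cit.}
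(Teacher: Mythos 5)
Your proposal is correct and follows essentially the same route as the paper: the paper establishes this proposition by invoking the Bertolini--Darmon--Prasanna construction as extended in Castella--Hsieh and carried out in \cite[Theorem 2.1.1]{KY24} and \cite[Theorem 2.1.1]{CGLS}, which is exactly the Serre--Tate/$p$-depletion/Waldspurger computation you outline before likewise deferring the normalisation bookkeeping to \emph{loc.\ cit.} The only point the paper adds that you omit is the remark that the auxiliary conductor $c=c_0$ of the CM point need only be prime to $p$ (rather than equal to $1$), with the interpolation formula then holding up to $p$-adic units by \cite[Theorem 3.8]{CastellaHsieh}.
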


This BDP $p$-adic $L$-function can also be explicitly constructed using results from~\cite{CastellaHsieh}. See~\cite[Theorem 2.1.1]{CGLS} for more detail in the elliptic curve case. It should be mentioned that in~\cite[Theorem 2.1.1]{KY24} they can pick $c=c_0=1$ because it is sufficient for their application to a BSD conjecture for elliptic curves, but we will assume that we are in a more general case where we can at most say $c=c_0$ is prime to $p$ (see~\cite[Assumption 5.12]{BDP13}). From~\cite[Theorem 3.8]{CastellaHsieh}, the above interpolation formula still holds (up to $p$-adic units) when $(c_0,p)=1$. 

Note that the above interpolation property characterizes the BDP $p$-adic $L$-function, but one cannot directly plug in the norm character $\bN_K^r$ of infinity type $(r,r)$ because it's outside the range of interpolation. The value at $\bN^r$ can be computed using the main theorem of~\cite{BDP13} (see~\cref{BDP}), and since $L_p(f,\bN^r)$ corresponds to the value $L(f/K,\bN^{-r},0)=L(f/K,\mathbf{1},r)=L(f/K,r)$, it is the constant term $\cL^\BDP_f(0)$.

\subsubsection{The Katz $p$-adic $L$-functions.}
\begin{proposition}
	There exists an element $\cL_\theta \in \Lambda_K^\nr$ characterized by the following interpolation property: For every character $\xi$ of $\Gamma$ crystalline at both $v$ and $\bar{v}$ and corresponding to a Hecke character of $K$ of infinity type $(n,-n)$ with $n\in\bZ_{>0}$ and $n\equiv 0$ (mod $p-1$), we have
	\begin{align*}
		\cL_{\theta}(\xi)=\frac{\Omega_p^{2n}}{\Omega_\infty^{2n}}\cdot 4\Gamma(n+\tfrac{k}{2})\cdot\frac{(2\pi i)^{n-\frac{k}{2}}}{\sqrt{D_K}^{n-\frac{k}{2}}}\cdot(1-\theta^{-1}(p)\xi^{-1}(v))\cdot(1-\theta(p)\xi(\bar{v})p^{-1})\\
		\times\prod_{\ell\mid C}(1-\theta(\ell)\xi(w)\ell^{-1})\cdot L(\theta_K\xi\mathbf{N}_K^\frac{k}{2},0).
	\end{align*}
\end{proposition}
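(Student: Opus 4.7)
The plan is to obtain $\cL_\theta$ by twisting Katz's classical two-variable $p$-adic $L$-function for the imaginary quadratic field $K$ by the finite order character $\theta$, restricting to the anticyclotomic line, and then verifying the interpolation formula by matching with Katz's theorem at the relevant algebraic Hecke characters.

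First, recall that Katz constructed a measure on $\Gal(K(\fp^\infty)/K)$ (where $K(\fp^\infty)$ is the compositum of ray class fields of $p$-power conductor) whose integrals against $p$-adic avatars $\hat{\chi}$ of algebraic Hecke characters $\chi$ of $K$ in the critical range $(a,b)$ with $a > 0$, $b \leq 0$ satisfy
\[
\cL^{\mathrm{Katz}}(\hat{\chi}) = (\mathrm{Gamma})\cdot (\mathrm{Euler\ at\ } p)\cdot \frac{\Omega_p^{\cdot}}{\Omega_\infty^{\cdot}} \cdot L(\chi, 0),
\]
where the transcendental normalization uses the chosen CM periods $(\Omega_p,\Omega_\infty)$ for an auxiliary elliptic curve with CM by $\cO_K$. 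Since $\theta$ has finite order and conductor $C$ supported on primes split in $K$, one can regard it as a character of $\Gal(K(\fp^\infty \cdot C)/K)$; the pushforward of the Katz measure twisted by $\theta$ along the quotient $\Gal(K(\fp^\infty C)/K) \twoheadrightarrow \Gamma$ defines an element of $\Lambda_K^{\nr}$, which we call $\cL_\theta$.

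Second, to verify the interpolation, let $\xi$ correspond to a Hecke character of infinity type $(n,-n)$ with $n > 0$, $n \equiv 0 \pmod{p-1}$, crystalline at $v,\bar v$. The product $\theta_K \xi \bN_K^{k/2}$ is an algebraic Hecke character of infinity type $(n + k/2, -n + k/2)$, landing inside the critical range because $n \geq 1$ and $n + k/2 > 0$. Evaluating $\cL_\theta$ at $\xi$ equals $\cL^{\mathrm{Katz}}$ evaluated at $\theta_K \xi \bN_K^{k/2}$ up to the modified Euler factors at the primes dividing $C$: these show up precisely as $\prod_{\ell \mid C}(1 - \theta(\ell)\xi(w)\ell^{-1})$, which are the Euler factors of the $\theta$-part removed by the twist. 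The congruence $n \equiv 0 \pmod{p-1}$ guarantees the character factors through the Galois group of the anticyclotomic tower without Teichm\"uller correction, so the formula takes the clean form asserted.

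Third, one checks the precise shape of the Gamma factor and the Euler factors at $v,\bar v$: the $\Gamma(n + k/2)$ comes from the archimedean normalization in Katz's formula for infinity type $(n+k/2,-n+k/2)$; the factor $(2\pi i)^{n-k/2}/\sqrt{D_K}^{n-k/2}$ encodes the mismatch between the complex period normalization and the $L$-value at $s=0$; and the two Euler factors $(1 - \theta^{-1}(p)\xi^{-1}(v))(1 - \theta(p)\xi(\bar v) p^{-1})$ are the modifications at $v$ and $\bar v$ dictated by Katz's interpolation at a crystalline character. The main technical obstacle is keeping consistent track of all normalizations — CM periods, Gamma factors, factors of $2\pi i$ and $\sqrt{D_K}$, and the distinction between the character $\xi$ and its contragredient — since Katz's original formulation and the BDP-style formulation above differ by well-documented but delicate conventions; beyond this bookkeeping the result is essentially a reformulation of Katz's construction with a finite-order twist.
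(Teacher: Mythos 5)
Your proposal is correct and follows essentially the same route as the paper, which simply cites this construction (Katz's two-variable measure twisted by $\theta$ and restricted to the anticyclotomic line, as recorded in \cite[Section 2.1]{KY24} and \cite[Section 2.1]{CGLS}) rather than proving it. One small imprecision: criticality of $\theta_K\xi\bN_K^{k/2}$ at $s=0$ requires the \emph{second} component $-n+\tfrac{k}{2}$ of the infinity type to be $\leq 0$, not merely $n+\tfrac{k}{2}>0$; this is guaranteed here because $n\equiv 0\pmod{p-1}$ and $n>0$ force $n\geq p-1\geq \tfrac{k}{2}$ under the paper's standing weight restriction.
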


The fact that $\ol\rho_f$ is reducible is equivalent to the fact that $f$ has (partial) Eisenstein descent described in~\cite[section 3.6]{Kri16}, meaning there is a congruence\[
\theta^j f\equiv \theta^j G\ \pmod{ \frp},\ j\geq 1
\]
where $\theta$ is the Atkin-Serre operator described in section 3.2 of \textit{op.\ cit.} and $G$ is a certain Eisenstein series indexed by $\phi$ and $\psi$. By the arguments in~\cite[Theorem 2.2.1]{CGLS}, the above congruence in turn yields the congruence between the $p$-adic $L$-functions\[
\cL_f^\BDP\equiv(\mathscr{E}^{\iota}_{\phi,\psi})^2\cdot(\cL_\phi)^2 \pmod{p\Lambda_K^{ur}},
\]
where $\mathscr{E}^{\iota}_{\phi,\psi}$ corresponds to the $\mathcal{P}_w(\theta)$ factors appearing in~\cref{algmain}. One knows that $\lambda(\cL_\phi)=\lambda(\cL_\psi)$ from a functional equation and $\mu(\cL_\phi)=\mu(\cL_\psi)=0$ by a result of Hida (\cite{Hida2010}).

Further computation as in~\cite[Theorem 2.2.2]{CGLS} gives the following comparison of analytic $\lambda$-invariants.

\begin{theorem}\label{anamain}
	Assume that $\ol\rho_f^{\mathrm{ss}}=\F(\phi)\oplus\F(\psi)$ as $G_\Q$-modules, with the characters $\phi$, $\psi$ labeled so that $p\nmid cond(\phi)$. Then $\mu(\cL_f^\BDP)=0$ and\[
	\lambda(\cL_f^\BDP)=\lambda(\cL_\phi)+\lambda(\cL_\psi)+\sum_{w\in S}\{\lambda(\cP_w(\phi))+\lambda(\cP_w(\psi))-\lambda(\cP_w(f))\}.\]
\end{theorem}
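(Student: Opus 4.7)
The plan is to derive both assertions mechanically from the Eisenstein congruence already recorded in the paragraph preceding the statement,
\[\cL_f^{\BDP} \;\equiv\; (\mathscr{E}^{\iota}_{\phi,\psi})^2 \cdot (\cL_\phi)^2 \pmod{p\,\Lambda_K^{\nr}},\]
combined with Hida's vanishing $\mu(\cL_\phi)=0$ \cite{Hida2010} and the functional-equation symmetry $\lambda(\cL_\phi)=\lambda(\cL_\psi)$. The overall shape of the argument mirrors \cite[Theorem 2.2.2]{CGLS} and its higher-weight refinement in \cite[Theorem 2.2.1]{KY24}; the point is that, once the congruence is granted, all remaining work is Iwasawa-theoretic bookkeeping.

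For the vanishing $\mu(\cL_f^{\BDP})=0$, I would first observe that $\mathscr{E}^{\iota}_{\phi,\psi}$ is a finite product of Euler-type factors in $\Lambda_K^{\nr}$ whose mod-$p$ reductions are nonzero, so it has trivial $\mu$-invariant. Combined with $\mu(\cL_\phi)=0$, this makes the reduction mod $p$ of the right-hand side of the congruence a nonzero element of $\Lambda_K^{\nr}/p\Lambda_K^{\nr}$. By the congruence the same is true of the reduction of $\cL_f^{\BDP}$, which is exactly the statement $\mu(\cL_f^{\BDP})=0$.

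With $\mu$ known to vanish on both sides, the $\lambda$-invariant of each object equals the degree of the distinguished polynomial part of its mod-$p$ reduction, and multiplicativity of degrees applied to the congruence gives
\[\lambda(\cL_f^{\BDP}) \;=\; 2\lambda(\cL_\phi) \;+\; 2\lambda(\mathscr{E}^{\iota}_{\phi,\psi}).\]
The functional equation for the Katz $p$-adic $L$-function (concretely, the involution $\gamma\mapsto\gamma^{-1}$ of $\Gamma$, together with the relation $\phi\psi=\omega$) gives $\lambda(\cL_\phi)=\lambda(\cL_\psi)$, which rewrites the first summand as $\lambda(\cL_\phi)+\lambda(\cL_\psi)$. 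It remains to establish
\[2\lambda(\mathscr{E}^{\iota}_{\phi,\psi}) \;=\; \sum_{w\in S}\bigl\{\lambda(\cP_w(\phi))+\lambda(\cP_w(\psi))-\lambda(\cP_w(f))\bigr\}.\]

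This last identification is the main obstacle and is purely local: one unwinds $\mathscr{E}^{\iota}_{\phi,\psi}$ as the product of its local factors over the primes $w\in S$ of $K$ and matches them termwise with the imprimitive-to-primitive Euler corrections $\cP_w(\phi)$, $\cP_w(\psi)$, $\cP_w(f)$. The factor of $2$ on the left reflects the splitting of each rational prime in $S$ into two primes of $K$ (guaranteed by the Heegner hypothesis), one naturally paired with $\phi$ and one with $\psi$, mirroring the squaring already visible in the BDP interpolation formula. I would transport this bookkeeping from \cite[\S 2.2]{CGLS} essentially verbatim, noting that the passage to higher weight only modifies the transcendental factors ($p$-adic periods, Gamma factors, twist by $\bN_K^{k/2}$) in the interpolation formulae, and none of these modifications affects the Iwasawa invariants that the proof tracks.
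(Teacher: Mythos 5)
Your proposal is correct and follows essentially the same route as the paper, which likewise deduces the theorem from the Eisenstein congruence $\cL_f^\BDP\equiv(\mathscr{E}^{\iota}_{\phi,\psi})^2(\cL_\phi)^2 \pmod{p\Lambda_K^{\nr}}$, Hida's vanishing of $\mu(\cL_\phi)$, the functional-equation identity $\lambda(\cL_\phi)=\lambda(\cL_\psi)$, and the local matching of $(\mathscr{E}^{\iota}_{\phi,\psi})^2$ with the $\cP_w$ corrections carried out in~\cite[Theorem 2.2.2]{CGLS}. The paper simply cites that computation where you spell it out, so there is nothing to add.
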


Combining~\cref{anamain} with~\cref{algmain}, together with the Iwasawa Main Conjectures for the characters proved by Rubin in~\cite{Rubin1991} (see~\cite[Theorem 2.2.3]{KY24} for a discussion about the trivial character, where we have $\lambda(\fX_\mathbf{1})=\lambda(\cL_\mathbf{1})+1$ instead), one knows for $\theta\ne\mathbf{1}$, \[\lambda(\fX_\theta)=\lambda(\cL_\theta)\]
and\[
\mu(\fX_\theta)=\mu(\cL_\theta).
\] We arrive at the first ingredient into the proof of the Iwasawa Main Conjectures.

\begin{theorem}\label{invariants}
		Assume that $\ol\rho_f^{ss}=\F(\psi)\oplus\F(\phi)$. Then $\mu(\cL_f^\BDP)=\mu(\frX_f)=0$ and\[
		\lambda(\cL_f^\BDP)=\lambda(\frX_f).\]
	\end{theorem}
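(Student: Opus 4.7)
Combine the algebraic and analytic comparisons \cref{algmain} and \cref{anamain}, both of which express the $\lambda$-invariant of $f$ in terms of those of the characters $\phi,\psi$ modulo the \emph{same} local correction
\[
\Delta \defeq \sum_{w\in S}\{\lambda(\cP_w(\phi))+\lambda(\cP_w(\psi))-\lambda(\cP_w(f))\}.
\]
After cancelling $\Delta$, match the character-level invariants via Rubin's proof of the two-variable main conjecture for imaginary quadratic fields, restricted to the anticyclotomic line. The vanishing statement $\mu(\cL_f^\BDP)=\mu(\frX_f)=0$ is immediate from the first assertions of \cref{anamain} and \cref{algmain}, so the only substantive step is the $\lambda$-identity, which I would split into two cases according to whether the trivial character intervenes.

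\textbf{Case (a): neither $\phi|_{G_K}$ nor $\psi|_{G_K}$ is trivial.} Then \cref{algmain} and \cref{anamain} give, with no additional correction,
\[
\lambda(\frX_f) \;=\; \lambda(\frX_\phi)+\lambda(\frX_\psi)+\Delta, \qquad \lambda(\cL_f^\BDP)\;=\; \lambda(\cL_\phi)+\lambda(\cL_\psi)+\Delta,
\]
and Rubin's theorem yields $\lambda(\frX_\theta)=\lambda(\cL_\theta)$ for $\theta=\phi,\psi$ since both are non-trivial on $G_K$. Subtracting the two displayed identities gives the claim.

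\textbf{Case (b): exactly one of the restrictions, say $\phi|_{G_K}$, is trivial.} In this case \cref{algmain} carries an exceptional $+1$,
\[
\lambda(\frX_f)+1 \;=\; \lambda(\frX_\phi)+\lambda(\frX_\psi)+\Delta,
\]
while \cref{anamain} is unchanged. On the character side, Rubin's main conjecture for $\mathbf{1}$ also acquires a $+1$ discrepancy, namely $\lambda(\frX_\mathbf{1})=\lambda(\cL_\mathbf{1})+1$, as recalled just before the theorem. Substituting $\lambda(\frX_\phi)=\lambda(\cL_\phi)+1$ and $\lambda(\frX_\psi)=\lambda(\cL_\psi)$ into the algebraic formula cancels the two $+1$'s and again yields $\lambda(\frX_f)=\lambda(\cL_f^\BDP)$.

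\textbf{Main obstacle.} There is really no obstacle beyond careful bookkeeping: everything assembles from results already in hand. The one point worth double-checking is that the exceptional $+1$ in \cref{algmain} and the exceptional $+1$ in Rubin's theorem match consistently regardless of whether $\F(\mathbf{1})$ appears as a sub- or quotient representation of $\ol\rho_f$; the remark at the end of \cref{algmain} is exactly what is needed to rule out any asymmetry here.
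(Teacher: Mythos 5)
Your proposal is correct and follows exactly the route the paper takes: the $\mu$-statement is read off from \cref{algmain} and \cref{anamain}, and the $\lambda$-identity follows by cancelling the common local correction term and invoking Rubin's main conjecture for the characters, with the exceptional $+1$ from the trivial-character case of \cref{algmain} cancelling against the $+1$ in $\lambda(\frX_\mathbf{1})=\lambda(\cL_\mathbf{1})+1$. Your explicit two-case bookkeeping is just a more detailed write-up of the same argument the paper gives in one sentence.
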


\subsection{The anticyclotomic Iwasawa Main Conjectures}

Recall that $K$ is a field satisfying the Heegner hypothesis~\eqref{heeg}. Further assume that $D_K\ne -3$ is odd. The main results in this section come from~\cite[Section 3]{KY24}, which are built upon earlier works in~\cite{CGLS} and~\cite{CGS}.

To prove the Iwasawa Main Conjecture~\eqref{imcintro} in the introduction, we first notice that it is equivalent to a `Heegner Point Main Conjecture' of Perrin-Riou type. In fact, we have the following (the notations come from~\cite{CGLS}):
\begin{proposition}\label[proposition]{imcequiv}
	Assume that $p=v\ol v$ splits in $K$ and $\H^0(K,\ol\rho_f)=0$. Then the following are equivalent:\begin{enumerate}
		\item[(IMC1)] Both $\H^1_{\cF_{\Lambda_K}}(K,\bT)$ and $\cX=\H^1_{\cF_{\Lambda_K}}(K,M_f)^\vee$ have $\Lambda$-rank one, and the equality \begin{equation*}
			\Char_{\Lambda_K}(\cX_{\tors})\supset \Char_{\Lambda_K}(\H^1_{\cF_{\Lambda_K}}(K,\bT)/\Lambda_K\cdot\kappa_{\infty})^2
		\end{equation*}
		holds in $\Lambda_K$.
		\item[(IMC2)] Both $\H^1_{\cF_{\nr}}(K,\bT)$ and $\fX_f=\H^1_{\cF_{\nr}}(K,M_f)^\vee$ are $\Lambda_K$-torsion, and the equality\begin{equation*}
			\Char_{\Lambda_K}(\fX_f)\Lambda_K^{\nr}\supset(\cL_f^\BDP)
		\end{equation*}
		holds in $\Lambda_K^{\nr}$.
	\end{enumerate}
Moreover, the same result holds for opposite divisibilities.
\end{proposition}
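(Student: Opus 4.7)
The plan is to follow the strategy going back to Howard's treatment of Heegner point main conjectures, adapted to the Eisenstein setting in \cite{CGLS} and \cite{KY24}. The two key ingredients are (i) Poitou--Tate global duality applied to the difference between the local conditions defining $\cF_{\Lambda_K}$ and $\cF_{\nr}$ at the prime $v$, and (ii) the explicit reciprocity law of Bertolini--Darmon--Prasanna (in the form established by Castella--Hsieh) identifying the image of $\kappa_\infty$ under the Perrin-Riou big logarithm with $\cL_f^\BDP$.

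First I would set up the Poitou--Tate comparison. The two Selmer structures differ only in their local condition at $v$: $\cF_{\Lambda_K}$ is of Perrin--Riou type (relaxed at $v$, strict at $\ol v$) whereas $\cF_{\nr}$ is the Greenberg/unramified structure. The standard exact sequence attached to a change of local condition produces a four-term sequence of $\Lambda_K$-modules whose outer terms involve $\fX_f$ and a quotient of $\H^1_{\cF_{\Lambda_K}}(K,\bT)$, with middle term controlled by $\H^1(K_v,\bT)$. The hypothesis $\H^0(K,\ol\rho_f)=0$, combined with the fact that $p=v\ol v$ splits in $K$, lets one propagate this vanishing into Iwasawa cohomology and kill the $\H^0$ and $\H^2$ contributions, so after taking characteristic ideals I expect an equality
\[
\Char_{\Lambda_K}(\cX_\tors) = \Char_{\Lambda_K}(\fX_f)\cdot\Char_{\Lambda_K}\!\big(\H^1(K_v,\bT)/\Lambda_K\cdot\loc_v(\kappa_\infty)\big)\cdot\Char_{\Lambda_K}\!\big(\H^1_{\cF_{\Lambda_K}}(K,\bT)/\Lambda_K\kappa_\infty\big)^{-1}
\]
(using that $\H^1_{\cF_{\Lambda_K}}(K,\bT)$ is of $\Lambda_K$-rank one, generated modulo torsion by $\kappa_\infty$, as follows from the invariant computations behind \cref{invariants}).

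The second ingredient, the Castella--Hsieh explicit reciprocity law, identifies the image of $\loc_v(\kappa_\infty)$ under the Perrin-Riou big logarithm with $\cL_f^\BDP$ up to units, hence yields
\[
\Char_{\Lambda_K}\!\big(\H^1(K_v,\bT)/\Lambda_K\cdot\loc_v(\kappa_\infty)\big)\cdot\Lambda_K^{\nr} = (\cL_f^\BDP)
\]
as ideals in $\Lambda_K^{\nr}$. Substituting this into the Poitou--Tate identity and squaring converts the divisibility in (IMC1) into that in (IMC2) and vice versa; since both ingredients are genuine equalities, the same transformation carries opposite divisibilities through, giving the final clause of the proposition.

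The main obstacle I anticipate is the clean control of $\H^0$ and $\H^2$ Iwasawa-cohomological terms in the Eisenstein setting, where reducibility of $\ol\rho_f$ typically causes complications. The role of the assumption $\H^0(K,\ol\rho_f)=0$ is precisely to suppress these: it forces the global $\H^0$ of $\bT$ over every finite subextension of $K_\infty/K$ to vanish, and Poitou--Tate then controls the $\H^2$ terms via local duals. Checking in detail that this leaves no spurious error factors, and in particular that the local conditions defining $\cF_{\Lambda_K}$ and $\cF_{\nr}$ match up on the nose with the Perrin-Riou/Greenberg pictures, reduces to bookkeeping already carried out in \cite{CGLS} and \cite{KY24}, but it is the delicate step.
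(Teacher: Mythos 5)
Your strategy is the right one, and it is in fact the argument behind the paper's proof: the paper disposes of this proposition by citing \cite[Proposition 4.2.1]{CGLS}, whose proof is exactly the combination of (i) a Poitou--Tate comparison of the two Selmer structures at the primes above $p$ and (ii) the Castella--Hsieh form of the Bertolini--Darmon--Prasanna explicit reciprocity law. So in substance you have reconstructed the cited proof rather than found a new route.

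One concrete point does not work as written: the exponents in your displayed Poitou--Tate identity. As stated, your identity is equivalent to
\begin{equation*}
\Char_{\Lambda_K}(\cX_\tors)\cdot\Char_{\Lambda_K}\bigl(\H^1_{\cF_{\Lambda_K}}(K,\bT)/\Lambda_K\kappa_\infty\bigr)=\Char_{\Lambda_K}(\fX_f)\cdot\Char_{\Lambda_K}\bigl(\H^1(K_v,\bT)/\Lambda_K\loc_v(\kappa_\infty)\bigr),
\end{equation*}
and if one also takes the reciprocity law in the first-power form you wrote, $\Char\bigl(\H^1(K_v,\bT)/\Lambda_K\loc_v(\kappa_\infty)\bigr)\Lambda_K^{\nr}=(\cL_f^\BDP)$, then the divisibility $\Char(\fX_f)\supset(\cL_f^\BDP)$ translates into $\Char(\cX_\tors)\supset\Char(S/\Lambda_K\kappa_\infty)$ to the \emph{first} power, not the square appearing in (IMC1); no amount of ``squaring'' at the end repairs this. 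The correct bookkeeping is that both squares must already be present before substitution: global duality (using self-duality of $\bT$ and the action of complex conjugation to identify the contributions at $v$ and $\ol v$) yields $\Char(\fX_f)=\Char(\cX_\tors)\cdot\Char\bigl(\H^1(K_{\ol v},\bT)/\loc_{\ol v}(S)\bigr)^2$, and the BDP reciprocity law expresses $\cL_f^\BDP$ as the \emph{square} of the Perrin-Riou logarithm of $\loc_{\ol v}(\kappa_\infty)$ (consistent with the squared Euler factor in the interpolation formula quoted in the paper), so that $(\cL_f^\BDP)=\Char\bigl(\H^1(K_{\ol v},\bT)/\Lambda_K\loc_{\ol v}(\kappa_\infty)\bigr)^2\Lambda_K^{\nr}$. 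With both squares in place the two divisibilities (and their opposites) match exactly. Also note that the rank-one generation of $\H^1_{\cF_{\Lambda_K}}(K,\bT)$ modulo torsion by $\kappa_\infty$ is part of the package being compared, not an input available from \cref{invariants} at this stage.
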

\begin{proof}
	This is~\cite[Proposition 4.2.1]{CGLS}.
\end{proof}

Using an analog of Howard's Kolyvagin system argument in~\cite{How04}, the above divisibility in (IMC1) was mostly proved in~\cite{CGLS} (where they need to invert the height one prime $(\gamma-1)\subset \Lambda_K$ in the sense that the divisibility only holds in $\Lambda_K[1/(\gamma-1)]$. Their theorems only stated the results in $\Lambda_K[1/p,1/(\gamma-1)]$ since inverting $p$ was enough for their application, but the argument is already known to work at $p$ in~\cite{How04}) and later completed in~\cite{CGS}. Further modification of the Kolyvagin system argument for modular forms of higher weight $k=2r$ where odd $r$ is discussed in~\cite{KY24}.

From the above equivalence, the divisibility in (IMC2) is obtained. But from~\cref{invariants}, the divisibility in (IMC2) must in fact be an equality, and hence the same result holds for (IMC1). Finally, as is discussed in~\cite[Remark 3.0.9]{KY24}, the assumption $\H^0(K,\ol\rho_f)$ can be removed by Ribet's lemma, and we thus have\begin{theorem}\label[theorem]{IMC}
	Assume $f$ has weight $2r$ with $r$ odd. Assume that $p=v\ol v$ splits in $K$. Then the following statements hold:\begin{enumerate}
		\item[(IMC1)] Both $\H^1_{\cF_{\Lambda_K}}(K,\bT)$ and $\cX=\H^1_{\cF_{\Lambda_K}}(K,M_f)^\vee$ have $\Lambda$-rank one, and the equality \begin{equation*}
			\Char_{\Lambda_K}(\cX_{\tors})= \Char_{\Lambda_K}(\H^1_{\cF_{\Lambda_K}}(K,\bT)/\Lambda_K\cdot\kappa_{\infty})^2
		\end{equation*}
		holds in $\Lambda_K$.
		\item[(IMC2)] Both $\H^1_{\cF_{\nr}}(K,\bT)$ and $\fX_f=\H^1_{\cF_{\nr}}(K,M_f)^\vee$ are $\Lambda_K$-torsion, and the equality\begin{equation*}
			\Char_{\Lambda_K}(\fX_f)\Lambda_K^{\nr}=(\cL_f^\BDP)
		\end{equation*}
		holds in $\Lambda_K^{\nr}$.
	\end{enumerate}
\end{theorem}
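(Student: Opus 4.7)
The plan is to establish both parts of the theorem simultaneously by combining three ingredients: the equivalence of (IMC1) and (IMC2) from~\cref{imcequiv}, a one-sided divisibility obtained via a Kolyvagin system argument on generalized Heegner cycles, and the equality of Iwasawa invariants from~\cref{invariants}. I would first work under the auxiliary hypothesis $\H^0(K,\ol\rho_f)=0$ that is required for~\cref{imcequiv}; under this hypothesis, not only are (IMC1) and (IMC2) equivalent, but their respective one-sided divisibilities are also in bijection.

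The first substantive step is to prove the divisibility
\[
\Char_{\Lambda_K}(\cX_\tors) \supset \Char_{\Lambda_K}\bigl(\H^1_{\cF_{\Lambda_K}}(K,\bT)/\Lambda_K\cdot\kappa_\infty\bigr)^2
\]
in (IMC1) via Howard's Kolyvagin system machinery applied to the $\Lambda_K$-adic Heegner cycle class $\kappa_\infty$. Since we are in the residually reducible Eisenstein setting, the standard large-image hypotheses fail, so the argument must follow the modifications of~\cite{CGLS} and~\cite{CGS} in which one works with the specific lattice $T_f$ corresponding to the canonical filtration~\eqref{chars}. The passage from weight $2$ to higher weight $2r$ with $r$ odd, where the Kolyvagin classes come from cycles rather than points and the verification of local conditions and sign compatibilities must be redone, is carried out in~\cite{KY24}; I would invoke those results rather than redoing the computations.

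Having secured this divisibility in (IMC1), \cref{imcequiv} transports it to the divisibility $\Char_{\Lambda_K}(\fX_f)\Lambda_K^\nr \supset (\cL_f^\BDP)$ in (IMC2). At this point I would apply~\cref{invariants}, which yields $\mu(\fX_f) = \mu(\cL_f^\BDP) = 0$ and $\lambda(\fX_f) = \lambda(\cL_f^\BDP)$. Because in $\Lambda_K^\nr$ a divisibility between ideals whose generators have the same $\mu$- and $\lambda$-invariants must be an equality, the divisibility in (IMC2) upgrades to $\Char_{\Lambda_K}(\fX_f)\Lambda_K^\nr = (\cL_f^\BDP)$. Applying \cref{imcequiv} once more carries the equality back to (IMC1), establishing both conjectures conditionally on the auxiliary hypothesis.

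Finally, the hypothesis $\H^0(K,\ol\rho_f) = 0$ is removed by Ribet's lemma as in~\cite[Remark 3.0.9]{KY24}: when either $\phi|_{G_K}$ or $\psi|_{G_K}$ is trivial one replaces $T_f$ by a homothetic lattice chosen so that the residual representation has no $G_K$-invariants, and the characteristic ideals appearing on both sides of (IMC1) and (IMC2) are unchanged. The main obstacle in this proof is the Kolyvagin system divisibility, which must simultaneously accommodate the failure of big-image conditions in the residually reducible setting and the shift from Heegner points to generalized Heegner cycles in higher weight; fortunately, both technical refinements have already been established in the cited literature, so with those results in hand the remainder of the argument is essentially formal.
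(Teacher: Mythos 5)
Your proposal follows exactly the paper's own argument: the one-sided divisibility in (IMC1) via the Kolyvagin system results of \cite{CGLS}, \cite{CGS}, \cite{KY24}, transfer to (IMC2) by \cref{imcequiv}, upgrade to equality by comparing Iwasawa invariants via \cref{invariants}, transfer back, and removal of the hypothesis $\H^0(K,\ol\rho_f)=0$ by Ribet's lemma. The reasoning and the points at which external results are invoked match the paper's proof, so the proposal is correct.
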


Some consequences of the anticyclotomic Iwasawa Main Conjectures include the $p$-converse theorem ((IMC1)) and some partial results towards $p$-part BSD formulae ((IMC2)). In fact, (IMC2) can be used to show that a rank $0$ $p$-part BSD formula implies a rank $1$ formula (more precisely, we need a rank $0$ formula for $f^K$, the twist of $f$ by $K$, see~\cite[Theorem 5.3.1]{CGLS}). However, to get a rank $0$ result, we need a cyclotomic Iwasawa Main Conjectures over $\bQ$.

\subsection{Cyclotomic Iwasawa theory and main conjectures}\label{cycIwa}

	In the groundbreaking work~\cite{GV00}, Greenberg and Vastal studied the cyclotomic Iwasawa main conjectures for elliptic curves in the residually reducible case for the first time. Let $E$ be an elliptic curve and $p$ be an Eisenstein prime for $E$, then again there is an exact sequence
\[0\to \F_p(\phi) \to E[p] \to \F_p(\psi)  \to 0.\]
Under the assumption that
\begin{equation*}\tag{GV}\label{GV}\phi\text{ is either unramified at }p\text{ and odd, or ramified at }p\text{ and even,}\end{equation*}they proved the main conjecture\[
\Char_{\Lambda_\bQ}(\fX_{ord}(E/\bQ_\infty))=(\cL_p^{\MSD}(E/\bQ))\text{ in }\Lambda\]
where $\fX_\ord(E/\bQ_\infty)=\Sel_{p^\infty}(E/\bQ_\infty)^\vee$ is the dual of the $p$-primary Selmer group of $E$ and $\cL_p^{\MSD}$ is the Mazur--Swinnerton-Dyer $p$-adic $L$-function. Here $\Lambda\defeq\Lambda_\bQ$ is the cyclotomic Iwasawa algebra. In particular, their extra assumption guarantees that the $\mu$-invariants of both side are vanishing.

Their arguments can be generalize to higher weight modular forms, as we will explain later. However, since we do not expect the $\mu$ invariants to be always vanishing, new ideas are needed to cover the remaining cases. The first attempt along this line was in~\cite{CGS}, where they relaxed the assumption~\eqref{GV}, and they were able to compare the $\mu$ invariants of the algebraic side and the analytic side without assuming their vanishing. The results in~\cite{CGS} were obtained for elliptic curves only, under a weaker assumption that when restricted to the decomposition group $G_p$, $\phi|_{G_p}\ne \mathbf{1},\omega$, which is a weaker assumption (i.e., it is implied by~\eqref{GV}). The removal of the assumptions on characters is done in~\cite{KY24}. We now know the following

\begin{theorem}[cyclotomic IMC]\label{CGSMC}
	Let $f\in S_{2}^{new}(\Gamma_0(N))$ be a newform. Let $p\nmid 2N$ be an Eisenstein prime for $f$, i.e., $\ol\rho_f$ is reducible. Then 
	\[\Char_{\Lambda_\bQ}(\H^1_\Gr(\bQ,M_f')^\vee)=(\cL_f^{\MTT})\text{ in }\Lambda.\]
\end{theorem}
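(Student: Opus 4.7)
The plan is to prove the cyclotomic IMC by combining Kato's Euler system divisibility with a comparison of Iwasawa invariants along the character exact sequence
\[ 0 \to \F(\phi) \to \ol\rho_f \to \F(\psi) \to 0, \]
together with the classical Iwasawa Main Conjecture of Mazur--Wiles for the Dirichlet characters $\phi$ and $\psi$. This parallels the anticyclotomic strategy used in~\cref{algmain} and~\cref{anamain}, adapted to the cyclotomic $\bZ_p$-extension of $\bQ$ in the style of~\cite{GV00,CGS}.

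First, Kato's Euler system yields the divisibility $(\cL_f^{\MTT}) \subseteq \Char_{\Lambda_\bQ}(\H^1_\Gr(\bQ,M_f')^\vee)$ after inverting $p$; the integral refinement in the Eisenstein case (explicitly flagged earlier in the paper as one of its contributions) is obtained by carefully tracking Euler factors at primes of bad reduction. Second, as in the proof of~\cref{algmain}, the residual filtration induces relations among the algebraic Iwasawa invariants $\mu(\fX(f)),\lambda(\fX(f))$ and those of $\fX(\phi),\fX(\psi)$, up to local correction terms at the imprimitive set of primes. Third, a cyclotomic Eisenstein congruence of the shape
\[ \cL_f^{\MTT} \equiv \cE^{\cyc}_{\phi,\psi}\cdot \cL_\phi \cdot \cL_\psi \pmod{\frp}, \]
where $\cL_\phi,\cL_\psi$ are Kubota--Leopoldt $p$-adic $L$-functions and $\cE^{\cyc}_{\phi,\psi}$ is an explicit product of local Euler factors, yields the same shape of relations among the analytic invariants. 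Invoking Mazur--Wiles to identify $\mu(\fX(\theta))=\mu(\cL_\theta)$ and $\lambda(\fX(\theta))=\lambda(\cL_\theta)$ for $\theta \in \{\phi,\psi\}$, one then concludes $\mu(\fX(f))=\mu(\cL_f^{\MTT})$ and $\lambda(\fX(f))=\lambda(\cL_f^{\MTT})$. Together with Kato's divisibility, this upgrades the one-sided containment to an equality of characteristic ideals in $\Lambda_\bQ$.

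The main technical obstacle is the analytic comparison: the congruence $f \equiv E_{\phi,\psi} \pmod{\frp}$ is at the level of $q$-expansions, but translating it into a congruence between cyclotomic $p$-adic $L$-functions built from modular symbols (rather than via CM theory, as for the BDP $L$-function appearing in~\cref{anamain}) demands careful normalization of canonical periods, and is essentially the source of the period-ratio hypothesis $\ord_p(\Omega_f^+/\Omega_g^+) \geq 0$ appearing in~\cref{rank0}. A secondary subtlety is that when $\phi|_{G_p}$ or $\psi|_{G_p}$ is trivial or cyclotomic, the Euler-factor corrections on the two sides no longer obviously match; handling this case is the key new input of~\cite{CGS} beyond~\cite{GV00}, and ultimately draws on anticyclotomic techniques over an auxiliary imaginary quadratic field $K$ to pin down the $\mu$-invariants in situations where Ferrero--Washington alone is insufficient.
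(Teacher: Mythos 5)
Your outline is essentially the correct strategy and matches how this result is actually established: the paper gives no proof of \cref{CGSMC}, importing it from~\cite{GV00},~\cite{CGS} and~\cite{KY24}, whose argument is precisely the combination of Kato's (integral) divisibility with a Greenberg--Vatsal-style comparison of Iwasawa invariants through the Eisenstein congruence and Mazur--Wiles for the characters, supplemented --- as you correctly note --- by the anticyclotomic descent of~\cite{CGS} and~\cite{KY24} to remove the character hypotheses and the $\mu=0$ assumption (and with the period ratio known to be a unit in weight $2$ by~\cite[Lemma 3.6]{GV00}, so no period hypothesis survives in the statement). The same template is what the author later carries out in detail for the higher-weight $(p-r)$-th branch in~\cref{cycMCp-r}.
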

Here $\cL_f^\MTT$ is the Mazur--Tate--Teitelbaum $p$-adic $L$-function for modular forms and $\H^1_\Gr(\bQ,M_f')$ is analogous to the Greenberg's Selmer group defined in~\cref{section:control} where $V_f$ is replaced by $\rho_f$. In other words, the Selmer group here is for the representation attached to $f$ before the self-dual twist when $f$ is a newform of weight $>2$. When $f$ is an elliptic modular form of weight $2$, this Selmer group agrees with the one at the beginning of this section.

One would hope to have a higher weight analog of this theorem, based on generalizations of~\cite{CGS}. However, currently this result is not known. We therefore will stick to a weaker version where we still impose the conditions on the characters. Due to some technical difficulty, we further assume that the weight is at most $p-1$. In such situations, a relevant result is given in~\cite[Theorem 0.2]{Hir18}.

\begin{theorem}[cyclotomic IMC without self-dual twist]\label{cycMC}
	Let $f\in S_{2r}^{new}(\Gamma_0(N))$ be a newform. Let $p\nmid 2N$ be an Eisenstein prime for $f$, i.e., such that $\ol\rho_f$ is reducible. Assume that $2\leq 2r\leq p-1$. Further assume that the sub-representation of $\ol\rho_f$ is ramified and even and the quotient-representation is unramified and odd. Then 
	\[\Char_{\Lambda_\bQ}(\H^1_\Gr(\bQ,M_f')^\vee)=(\cL_f^{\MTT})\text{ in }\Lambda\otimes\bQ_p .\]Here $\cL_f^\MTT$ is the usual Mazur--Tate--Teitelbaum $p$-adic $L$-function in~\cite[Section 3.3]{Hir18}.
\end{theorem}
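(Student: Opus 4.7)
The plan is to adapt the strategy of Greenberg--Vatsal~\cite{GV00}, originally developed for elliptic curves, to the higher weight setting. The reducibility of $\ol\rho_f$ produces a mod-$p$ congruence between $f$ and an Eisenstein series of weight $2r$ associated to the pair $(\phi,\psi)$, which propagates to both sides of the conjectured equality and reduces the problem (at the level of $\lambda$-invariants) to the Iwasawa Main Conjectures for the Dirichlet characters $\phi,\psi$, namely Mazur--Wiles. Since the statement is only asserted in $\Lambda_\bQ\otimes\bQ_p$, $\mu$-invariants are invisible and only $\lambda$-invariants together with one divisibility need to be shown. The restriction $2r\leq p-1$ keeps us within the Fontaine--Laffaille range, which is what makes the relevant Eisenstein congruence hold integrally with the correct normalization of periods.

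On the analytic side, I would establish a congruence expressing $\cL_f^\MTT$ modulo $p$ as the product of the Kubota--Leopoldt $p$-adic $L$-functions attached to $\phi$ and $\psi$, up to an explicit local Euler factor at $p$. The hypotheses that the sub $\F(\phi)$ is even and ramified at $p$, while the quotient $\F(\psi)$ is odd and unramified at $p$, ensure that this Euler factor is a $p$-adic unit (in particular, they rule out the trivial-zero phenomena responsible for the restrictive assumption~\eqref{GV} of Greenberg--Vatsal). Combined with the vanishing of $\mu$-invariants for Kubota--Leopoldt (Ferrero--Washington), this yields $\mu(\cL_f^\MTT)=0$ together with an explicit formula for $\lambda(\cL_f^\MTT)$ as a sum of the character $\lambda$-invariants and local contributions.

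On the algebraic side, I would filter the Greenberg Selmer group via the short exact sequence $0\to\F(\phi)\to\ol\rho_f\to\F(\psi)\to 0$ coming from a canonical choice of lattice. Under the parity and ramification hypotheses, the ordinary filtration of $\rho_f$ at $G_p$ is compatible with this extension in the sense that the Greenberg local condition restricts to the natural unramified condition for $\psi$ on the quotient and lifts to the natural local condition for $\phi$ on the sub. Combined with the usual control of imprimitive vs.\ primitive Selmer groups and the Main Conjectures for $\phi$ and $\psi$, this produces a formula for $\lambda(\H^1_\Gr(\bQ,M_f')^\vee)$ as a sum of character $\lambda$-invariants and local terms, matching the analytic side term by term.

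Finally, to upgrade the equality of $\lambda$-invariants to the equality of characteristic ideals, I would invoke Kato's Euler system divisibility $\Char_{\Lambda_\bQ}(\H^1_\Gr(\bQ,M_f')^\vee)\supset(\cL_f^\MTT)$ in $\Lambda_\bQ\otimes\bQ_p$; the matching of $\lambda$-invariants then forces equality. The hard part will be establishing Kato's divisibility at an Eisenstein prime: the classical Euler system argument requires residual irreducibility to keep the Euler system in a well-controlled lattice, and at Eisenstein primes one must work delicately modulo the Eisenstein ideal. This is precisely why the theorem is only stated after tensoring with $\bQ_p$; removing this $\bQ_p$ to obtain an integral main conjecture would require a higher-weight analogue of the refinements of~\cite{CGS}, which is the \emph{branch issue} alluded to by the author and deliberately left open here.
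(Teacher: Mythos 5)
Your outline is essentially the argument behind the cited result: the paper's own proof of this theorem is just an appeal to~\cite[Theorem 0.2]{Hir18}, whose proof is exactly the Greenberg--Vatsal-style comparison of algebraic and analytic $\lambda$-invariants via the Eisenstein congruence $f\equiv G\pmod{\pi}$, combined with Kato's divisibility. So the strategy matches.

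One correction to your final paragraph, though: the reason the equality is only asserted in $\Lambda\otimes\bQ_p$ is \emph{not} a difficulty with Kato's divisibility at Eisenstein primes. Kato's divisibility in $\Lambda\otimes\bQ_p$ does not require residual irreducibility, and the integral refinement at Eisenstein primes is in fact available (the paper proves it in \cref{intdiv} by generalizing Wuthrich~\cite{Wut17}). The actual obstruction, as the paper states, is on the analytic side: the congruence of $p$-adic $L$-functions is proved for the imprimitive form $g$ (with Euler factors at $\ell\mid N$ removed), and $\sL_p(g,T)=(\Omega^+_f/\Omega^+_g)\sL_p(f,T)$; since $\ord_p(\Omega^+_f/\Omega^+_g)$ is not known in weight $>2$, one cannot conclude $\mu(\cL_f^\MTT)=0$ as you assert --- this is precisely the unproven hypothesis~\eqref{per} elsewhere in the paper. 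Only the $\lambda$-invariants (which are insensitive to this scalar) can be matched, whence the equality up to powers of $p$. A smaller point: the parity/ramification hypothesis on $\F(\phi)$ is not a way of \emph{avoiding} the Greenberg--Vatsal condition~\eqref{GV}; it \emph{is} one of the two branches of that condition, fixed by the choice of lattice.
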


\begin{proof}
	This is the main result of~\cite{Hir18}, where the canonical periods for higher weight modular forms at Eisenstein primes are studied. It should be noted that in the proof of this theorem in~\textit{loc. cit.}, only the Iwasawa $\lambda$-invariants were compared and hence only an equality up to $p$-powers was obtained. The ambiguity in $\mu$-invariants comes from the failure of the comparison of the periods $\Omega^+_f$ and $\Omega^+_g$ in Theorem 3.19 in \textit{loc. cit.}. 
\end{proof}

Because of the ambiguity in $p$-powers, we fail to obtain the full cyclotomic Main Conjecture. In fact, as we will see, the lack of knowledge about periods is essentially the only issue preventing us from proving rank $0$ TNC formulae. However, one should also notice that this direct generalization of~\cite{GV00} is only good for the value $L(f,1)$, mainly due to a `branch' issue. Given that we would like to say something about $L'(f,r)$, we first need to study a different cyclotomic Main Conjecture for the right branch, as we now explain.

Kato~\cite{Kato} in fact considered the equivariant cyclotomic Iwasawa Main Conjectures for higher weight modular forms, which is a generalization of~\cref{CGSMC} already in weight $2$. More precisely, Kato studied his Selmer groups and $p$-adic $L$-functions over the bigger Iwasawa algebra $\widehat{\Lambda}\defeq\widehat{\Lambda_\bQ}\defeq \Gal(\bQ(\mu_{p^\infty})/\bQ)$, which decomposes into $p-1$ copies of the usual Iwasawa algebra $\widehat{\Lambda}=\bigoplus_{i=0}^{p-2}\Lambda^{(i)}$ where $\Delta=\Gal(\bQ(\mu_p)/\bQ)$ acts on $\Lambda^{(i)}$ via the $i$-th power of the the Teichmüller character $\omega$, and each module $M$ over $\widehat{\Lambda}$ also admits such a decomposition $M=\bigoplus_{i=0}^{p-2} M^{(i)}$. Consequently, his Main Conjecture is equivalent to $p-1$ Main Conjectures for different branches (see e.g. \cite[Section 5.1]{EPW06}), and he proved one divisibility (see Theorem 5.1.2 in \textit{loc. cit.}). The $i=0$ branch of the Main Conjectures corresponds to~\cref{CGSMC} in weight $2$, and has also been widely studied for modular forms of weight $k$ in~\cite{SU14} in the irreducible case (with $k\equiv 2 \pmod{p-1}$) and in~\cite{Hir18} (with $2\leq k\leq p-1$) in the Eisenstein case. Note that, however, despite the fact that each branch of the $p$-adic $L$-function does interpolate all values $L(f,1)$($=L(\rho_f,1)$, same below) to $L(f,k-1)$, if one wants to study the Tagamawa Number Conjecture concerning the value $L(f,s)$, the branch must be chosen carefully (in fact, one needs the $i$-th branch with $i\equiv 1-\frac{k}{2}\pmod{p-1}$, which will be clear in the next subsection). For example, in weight $2$ cases, the $i=0$ branch of the Main Conjectures is used in the proof of the BSD conjectures concerning $L(f,1)$.

The study of rank $1$ BSD formula is also possible due to the Gross--Zagier formula, which relates $L'(f,1)$ to heights of Heegner points in weight $2$. For higher weight modular forms, however, only the value $L'(f,\frac{k}{2})$ appears in Zhang's formula. Thus to study rank $1$ Tamagawa Number Conjectures, it is only reasonable to expect a formula involving $L'(f,\frac{k}{2})$, and we must prove the $(p-\frac{k}{2})$-th branch of the Main Conjectures, which was not explicitly written down in~\cite{Hir18}. Luckily, Hirano's arguments can be modified to cover these new cases, which we will explain in the next subsection.

\subsection{The $(p-\frac{k}{2})$-th branch and the self-dual twist}	
Since our aim is to study both rank $0$ and $1$ results towards the Tamagawa Number Conjecture, we will focus on the central $L$-values $L(f,r)$ and $L'(f,r)$ of a newform $f$ of weight $k=2r$. Therefore we will study the cyclotomic Iwasawa Main Conjectures for the $(p-r)$-th branch, and we will first consider the algebraic side and the analytic side separately. The study of Selmer groups is similar to those in~\cite[Section 4]{EPW06} and~\cite[Section 3.2]{Hir18}, and the study of $p$-adic $L$-functions will be a modification of~\cite[Section 3.3, Section 3.4]{Hir18}.

Note that as in the weight $2$ case, a proof of the rank $1$ TNC formula will crucially rely on the anticyclotomic Main Conjectures for $f$, for which it is necessary to consider the self-dual representation $\rho_f(1-r)$. For this reason, we will also need to study the cyclotomic Main Conjectures for self-dual representations. One should notice that both~\cite{EPW06} and~\cite{Hir18} work with the representation $\rho_f$ without self-dual twist (which thus has an unramified quotient), but thanks to the following lemma (here $\Lambda=\Lambda_\bQ$ is taken as the cyclotomic Iwasawa algebra), we don't have to redo all their work for $\rho_f(1-r)$.

\begin{lemma}\label[lemma]{Rubintwist}
	Let $A,B$ be finitely generated $\Lambda$-modules, and $\rho: \Gamma_\bQ\to \cO^\times$ a character. If $\Char_{\Lambda_\bQ}(A)=\Char_{\Lambda_\bQ}(B)$, then $\Char_{\Lambda_\bQ}(A\otimes \rho)=\Char_{\Lambda_\bQ}(B\otimes \rho)$.
\end{lemma}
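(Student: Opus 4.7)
The plan is to prove the lemma by exhibiting an explicit $\cO$-algebra automorphism of $\Lambda_\bQ$---the twisting automorphism $\mathrm{Tw}_\rho$---and then showing that formation of the characteristic ideal is ``equivariant'' with respect to twisting in a precise sense. Concretely, I would define $\mathrm{Tw}_\rho\colon \Lambda_\bQ\to\Lambda_\bQ$ as the unique continuous $\cO$-algebra endomorphism sending each group-like element $\gamma\in\Gamma=\Gal(\bQ_\infty/\bQ)$ to $\rho(\gamma)\gamma$ (using that $\rho$ induces a character of $\Gamma$, and that $\rho(\gamma)\in\cO^\times\subset\Lambda_\bQ^\times$). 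Its two-sided inverse is manifestly $\mathrm{Tw}_{\rho^{-1}}$, so $\mathrm{Tw}_\rho$ is a topological $\cO$-algebra automorphism of $\Lambda_\bQ$.

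Next I would observe that, for any finitely generated $\Lambda_\bQ$-module $M$, the twist $M\otimes \rho$ is, as an abelian group, simply $M$ itself, with the $\Lambda_\bQ$-action obtained by restriction of scalars along $\mathrm{Tw}_\rho$; that is, $M\otimes\rho\cong \mathrm{Tw}_\rho^{\ast}M$. Such restriction of scalars defines an exact auto-equivalence of the category of finitely generated $\Lambda_\bQ$-modules that, being induced by a ring automorphism, preserves pseudo-nullity. Appealing to the structure theorem, if $M$ is pseudo-isomorphic to $\Lambda_\bQ^{s}\oplus\bigoplus_i \Lambda_\bQ/(f_i^{n_i})$ for distinct height-one primes $(f_i)$, then $M\otimes\rho$ is pseudo-isomorphic to $\Lambda_\bQ^{s}\oplus\bigoplus_i \Lambda_\bQ/(\mathrm{Tw}_\rho(f_i)^{n_i})$; consequently,
\[
\Char_{\Lambda_\bQ}(M\otimes\rho) \;=\; \mathrm{Tw}_\rho\bigl(\Char_{\Lambda_\bQ}(M)\bigr).
\]

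Granted this identity, the lemma is immediate: applying $\mathrm{Tw}_\rho$ to both sides of the hypothesis $\Char_{\Lambda_\bQ}(A)=\Char_{\Lambda_\bQ}(B)$ yields $\Char_{\Lambda_\bQ}(A\otimes\rho)=\Char_{\Lambda_\bQ}(B\otimes\rho)$. I do not anticipate a genuine obstacle here; the only point requiring care is the compatibility $\Char\circ\mathrm{Tw}_\rho^{\ast}=\mathrm{Tw}_\rho\circ\Char$, which reduces via the structure theorem to the evident facts that $\mathrm{Tw}_\rho$ permutes the height-one primes of $\Lambda_\bQ$ and that $\mathrm{Tw}_\rho^{\ast}(\Lambda_\bQ/(f))\cong \Lambda_\bQ/(\mathrm{Tw}_\rho(f))$ for every $f\in\Lambda_\bQ$. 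This is a standard functoriality property of the ``Rubin twist'' in Iwasawa theory, and it is exactly what is needed to pass from the Main Conjectures before the self-dual twist (as formulated, e.g., in \cite{Hir18}) to their counterparts for $\rho_f(1-r)$.
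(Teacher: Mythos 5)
Your argument is correct and is in substance exactly what the paper does: the paper's proof is a one-line citation to \cite[Lemma VI.1.2]{RubinEulerSystems}, and that lemma is precisely the identity $\Char_{\Lambda_\bQ}(M\otimes\rho)=\mathrm{Tw}_\rho(\Char_{\Lambda_\bQ}(M))$ established by the twisting-automorphism / restriction-of-scalars argument you spell out. Your write-up simply makes the cited standard fact self-contained.
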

\begin{proof}
	This follows immediately from~\cite[Lemma VI.1.2]{RubinEulerSystems}.
\end{proof}
For our application, we will choose, for the representation $\rho_f\otimes \omega^{p-r}$, $A=\Sel(\bQ_\infty,A_{\rho_f\otimes \omega^{p-r}})$ and $B=(\cL_{\rho_f\otimes \omega^{p-r}})$ (to be defined in the next two subsubsections) and $\rho=\langle\chi_\cyc\rangle^{p-r}$ where $\langle\chi_\cyc\rangle$ is the composition $\Gamma\to \Gal(\bQ(\mu_{p^\infty})/\bQ)\xrightarrow{\chi_\cyc} \cO^\times$, so that if we can prove the Main Conjecture for $f\otimes \omega^{p-r}$ (i.e., for the $(p-r)$-th branch of the Main Conjecture over $\widehat{\Lambda}$):\[
\Char_\Lambda(\Sel(\bQ_\infty,A_{\rho_f\otimes \omega^{p-r}})^\vee)=(\cL_{\rho_f\otimes \omega^{p-r}})\text{ in }\Lambda,
\]
then the Main Conjecture for $\rho_f\otimes \omega^{p-r}\otimes \langle\chi_\cyc\rangle^{1-r}$ (here $\langle \chi_\cyc\rangle$ is also considered as a character of $\Gal(\bQ(\mu_{p^\infty})\bQ)$ via the projection $\Gal(\bQ(\mu_{p^\infty})/\bQ)\to \Gamma)$ also holds. But $\rho_f\otimes \omega^{p-r}\otimes \langle\chi_\cyc\rangle^{1-r}=\rho_f\otimes \omega^{1-r}\otimes \langle\chi_\cyc\rangle^{1-r}$ is nothing but the self-dual representation $\rho_f(1-r)$, so it suffices to prove the Main Conjecture for $\rho_f$ with the $(p-r)$-th branch.

As in~\cite{Hir18}, we assume the following assumption throughout this chapter, where~\eqref{GV'} is an equivalent description of~\eqref{GV} after fixing an order of the characters:
\begin{assumption}
	In the remaining of this chapter, we assume that
	\begin{itemize}
		\item $r$ is odd.	
		\item In the short exact sequence
		\begin{equation}\label{twistrhofbar}0\to \bF(\phi) \to \ol\rho_f\otimes\omega^{p-r} \to \bF(\psi)\to 0,\end{equation}
		\begin{equation*}\tag{GV'}\label{GV'}
			\phi=\omega^{p-r}\phi'\text{ where }\phi'\text{ is ramified at }p\text{ and even}.
		\end{equation*}
		Note that this means $\psi=\omega^{p-r}\psi'$ where $\psi'$ is unramified at $p$ and odd. Also we have $\widehat{\phi'}=\widehat{\psi'}^{-1}\chi_\cyc^{2r-1}$ where $\widehat{\phi'}$ and $\widehat{\psi'}$ are Teichmüller lifts of $\phi'$ and $\psi'$ respectively. Since $r$ is odd and $p-r$ is even, $\phi$ will be even and ramified, and $\psi$ will be odd and ramified.
	\end{itemize}
\end{assumption}

In particular, neither $\phi$ nor $\psi$ is unramified, which is different from the weight $2$ case.

\subsection{The Selmer groups}\label{p-rSel}
In this subsubsection, we adopt the conventions from~\cite[Section 4]{EPW06} so that our $A_{\rho_f\otimes \omega^{p-r}}$ is their $A_{f,p-r}$. The goal of this subsubsection is to describe the algebraic $\mu$- and $\lambda$- invariants of $\Sel(\bQ_\infty,A_{\rho_f\otimes \omega^{p-r}})$ which is the $\Sel(\bQ_\infty,A_{f,p-r})$ in \textit{loc. cit.} in the Eisenstein case. 

We begin by notice that the proof of Theorem 4.1.1 in \textit{loc. cit.} still applies in the Eisenstein case under~\eqref{GV'}. Notice that almost divisibility (i.e., that $\Sel(\bQ_\infty,A_{f,p-r})$ has no proper $\Lambda$-submodules of finite index) now follows from the fact that $\H^0(\bQ,\ol\rho_f\otimes \omega^{p-r})=0$ and hence $\H^0(\bQ,A_{f,p-r})=0$, which is in turn a consequence of~\eqref{GV'}. In particular, we know that $\mu^\alg(f,\omega^{p-r})=0$ if and only if $\Sel(\bQ_\infty,A_{f,p-r})[\pi]$ is finite, in which case $\lambda^\alg(f,\omega^{p-r})=\dim_{\F}(\Sel(\bQ_\infty,A_{f,p-r})[\pi])$. Here $\pi$ is a uniformizer of the ring of integers in a (possible extension of) the Fourier field of $f$.

There are two short exact sequences involving $A_{f,p-r}$:\[
0\to \ol\rho_f\otimes\omega^{p-r}\to A_{f,p-r} \xrightarrow{\pi} A_{f,p-r}\to 0
\]
\[
0\to \Fil^+(A_{f,p-r}) \to A_{f,p-r} \to \Fil^-(A_{f,p-r}) \to 0
\]
where $G_\bQ$ acts on $\Fil^+(A_{f,p-r})$ via $\chi_\cyc^{k-1}\lambda(\alpha)^{-1}\omega^{p-r}$ and on $\Fil^-(A_{f,p-r})$ via $\lambda(\alpha)\omega^{p-r}$, with $\lambda(\alpha)$ being unramified characters that sends the arithemtic Frobenius to the unit root $\alpha$ of the Hecke polynomial of $f$. One thing to notice is that again neither $\Fil^+(A_{f,p-r})$ nor $\Fil^-(A_{f,p-r})$ is unramified, which is different from the weight $2$ case.

To keep things consistent, we also slightly modify the definitions of the Selmer groups for the characters $\phi$ and $\psi$ in the equation~\eqref{twistrhofbar}. With notations from~\cite[Section 3.2]{Hir18}, we define $F^+(A_\phi)=A_\phi$ and $F^+(A_\psi)=0$ according to ramification of $\phi'$ and $\psi'$ (rather than $\phi$ and $\psi$). Then we can define the Selmer groups for $A_\phi$ and $A_\psi$ in the same manner, and the arguments from \textit{loc. cit.} can be used to show the equalities (their equations (3.3) and (3.4)):\[
\mu_{f\otimes\omega^{p-r}}^\alg=\mu_{f\otimes\omega^{p-r},\Sigma_0}^\alg=0
\]
and\begin{align*}
	\lambda^\alg_{f\otimes\omega^{p-r},\Sigma_0}&=\dim_{\cO/\pi}\Sel^{\Sigma_0}_{A_{f,p-r}[\pi]}(\bQ_\infty)\\
	&=\dim_{\cO/\pi}\Sel^{\Sigma_0}_{A_\phi[\pi]}(\bQ_\infty)+\dim_{\cO/\pi}\Sel^{\Sigma_0}_{A_\psi[\pi]}(\bQ_\infty)\\&=\lambda_{\phi,\Sigma_0}+\lambda_{\psi,\Sigma_0}.
\end{align*}
Note that in their proofs they only used the assumptions that $\phi$ is even and non-trivial, and $\psi$ is odd and not $\omega$, which are still true under~\eqref{GV'}, except~\cite[Theorem 3.10]{Hir18}, which is replaced by~\cite[Theorem 4.1.1]{EPW06} above.

\subsection{The $p$-adic $L$-functions}	
In this subsubsection, we will follow the discussions on $p$-adic $L$-functions in~\cite[Section 3.3]{Hir18}, and modify the arguments when needed. In particular, we will take his $\chi$ to be $\omega^{p-r}$ so that the $p$-adic $L$-function $\cL_{\rho_f\otimes \omega^{p-r}}\defeq \sL_p(f,\omega^{p-r},T)$ is the $(p-r)$-th branch of Kato's big $p$-adic $L$-function over $\widehat{\Lambda}$, which will be related to $\Sel(\bQ_\infty,A_{f,p-r})$ in the $(p-r)$-th branch of Kato's Main Conjectures.

We similarly consider the imprimitive $p$-adic $L$-function $\sL_p^{\Sigma_0}(f,\omega^{p-r},T)$, and there is a relation 
\[\sL_p^{\Sigma_0}(f,\omega^{p-r},T)=\sL_p(f,\omega^{p-r},T)\prod_{\ell\in\Sigma_0}P_\ell(T),\] with $P_\ell(T)$ defined by Proposition 3.9 in \textit{op. cit.} for $A=A_{f,p-r}$. 

The analytic Iwasawa invariants of $f\otimes \omega^{p-r}$, $\mu^\an_{f\otimes\omega^{p-r}}$, $\lambda^\an_{f\otimes\omega^{p-r}}$, $\mu^\an_{f\otimes\omega^{p-r},\Sigma_0}$, and $\lambda^\an_{f\otimes\omega^{p-r},\Sigma_0}$ are now defined in the usual way via the above $p$-adic $L$-functions.

\subsection{An integral divisibility}
It is shown in~\cite{GV00} that the $p$-adic $L$-function attached to an elliptic curve is integral (i.e., belongs to $\Lambda_\bQ$). In fact, they showed that its $\mu$ invariant is $0$. However, for higher weight modular forms, such an integrality result is not as clear. For completeness, we include a possibly new proof of this integrality result for modular forms of any weight $2r$ that might be helpful for future purposes. We begin by recalling some backgrounds.

Recall that Kato in~\cite{Kato} studied one divisibility of the cyclotomic Iwasawa Main Conjecture over the Iwasawa algebra $\widehat{\Lambda_\bQ}\defeq\bZ_p\llbracket\Gal(\bQ(\mu_{p^\infty})/\bQ)\rrbracket$ for the $\bZ_p^\times$-extension. In particular, he showed that certain Selmer group `divides' a $p$-adic $L$-function $\widehat{\cL_f}$ in $\widehat{\Lambda_\bQ}\otimes\bQ$ and when $\ol\rho_f$ is irreducible, $\widehat{\cL_f}$ is integral, i.e. $\widehat{\cL_f}\in\widehat{\Lambda_\bQ}$, and the divisibility also holds in $\widehat{\Lambda_\bQ}$. In particular, our $p$-adic $L$-function $\cL_f^{(p-r)}\defeq \cL_{\rho_f\otimes\omega^{p-r}}$ is the $(p-r)$-th branch of $\widehat{\cL_f}$.

As in~\cite{GV00}, we would also need to know $\cL_f^{(p-r)}$ is integral in the Eisenstein case. They showed it via explicit computation in the case of elliptic curves. The integrality result is later proved by Wuthrich in~\cite{Wut17} using a different method. In fact, Wuthrich first proved the integrality of Kato's zeta elements for elliptic curves in the Eisenstein case, and then deduce the integrality of the $p$-adic $L$-function $\widehat{\cL_f}$ as a consequence of Kato's integral divisibility. We will generalize Wuthrich's arguments to show the integrality of $\cL_f^{(p-r)}$ for a higher weight modular form $f$. Note that our result is not a full generalization since we only focus on $\Lambda_\bQ$ rather than the full $\widehat{\Lambda_\bQ}$, but it is sufficient for our purpose.

\begin{proposition}\label{intdiv}
	Assume~\eqref{GV'}. Then
	\[\Char_{\Lambda_\bQ}(\H^1_\Gr(\bQ,M_f'))\supset(\cL_f^{(p-r)})\text{ in }\Lambda_\bQ.\]
	In particular, $\cL_f^{(p-r)}\in\Lambda_\bQ$.
\end{proposition}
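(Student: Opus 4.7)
The plan is to adapt Wuthrich's strategy from~\cite{Wut17} for elliptic curves to higher weight newforms. Recall that Kato's Euler system in~\cite{Kato} produces a zeta element $z_f$ in Iwasawa cohomology together with a Perrin-Riou--Coleman-type map $\mathrm{Col}$ such that $\widehat{\cL_f}=\mathrm{Col}(z_f)$; projecting to the $(p-r)$-th branch yields $\cL_f^{(p-r)}$. Kato's one-divisibility gives $\Char_{\Lambda_\bQ}(\H^1_\Gr(\bQ,M_f'))\supset(\cL_f^{(p-r)})$ a priori in $\Lambda_\bQ\otimes_{\bZ_p}\bQ_p$, and since the characteristic ideal is automatically integral, the entire content of the proposition reduces to showing $\cL_f^{(p-r)}\in\Lambda_\bQ$; once this is known, the integral divisibility in $\Lambda_\bQ$ follows by comparing valuations.

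To establish this integrality, I would pin down $z_f$ in the integral Iwasawa cohomology of a carefully chosen lattice. Under~\eqref{GV'} the characters $\phi'$ and $\psi'$ are distinct (one ramified and even, the other unramified and odd), so Ribet's lemma applied to $\rho_f$ produces a canonical Galois-stable lattice $T_f\subset V_f$ compatible with the ordinary filtration
\[
0\to F^+T_f\to T_f\to F^-T_f\to 0,
\]
with $F^-T_f$ unramified and reducing to $\F(\psi')$ modulo the uniformizer. Following Wuthrich's method, I would then argue that $z_f\in\H^1_\Iw(\bQ_\infty, T_f)$ for this canonical lattice. Localizing at $p$ and projecting onto $F^-T_f$ produces a class $z_f^{-}\in\H^1_\Iw(\bQ_{\infty,p},F^-T_f)$, which can be compared with the classical integral cyclotomic zeta element for the Dirichlet character $\psi'$ via the residual identification $F^-\ol T_f\simeq\F(\psi')$. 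The distinctness of $\phi$ and $\psi$ ensures that the relevant comparison maps are injective and preserve integrality, forcing $z_f^{-}$ itself to be integral.

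On the unramified lattice $F^-T_f$, the Coleman map is essentially evaluation at Frobenius along the cyclotomic tower and manifestly takes integral classes into $\Lambda_\bQ$. Hence integrality of $z_f^{-}$ forces $\cL_f^{(p-r)}\in\Lambda_\bQ$, and combining with Kato's divisibility yields the stated integral inclusion.

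The main obstacle will be the integrality step for $z_f$ itself: Wuthrich's original argument for elliptic curves uses explicit knowledge of the Tate module and the Weil pairing that does not directly generalize. For higher weight I expect the right analog to rely on Fontaine--Laffaille theory (available since $2r\leq p-1$) to describe the local structure at $p$ of the canonical lattice in sufficient detail, together with a careful bookkeeping of the comparison with character zeta elements made possible by the distinctness of $\phi$ and $\psi$. Once these inputs are in place, the remaining arguments should parallel those of~\cite{Wut17}.
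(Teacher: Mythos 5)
Your overall instinct---adapt Wuthrich's argument from~\cite{Wut17}---matches the paper, but there are two genuine gaps. First, the opening reduction is invalid: knowing the divisibility in $\Lambda_\bQ\otimes\bQ_p$ together with $\cL_f^{(p-r)}\in\Lambda_\bQ$ does \emph{not} yield the divisibility in $\Lambda_\bQ$ ``by comparing valuations.'' If the algebraic characteristic ideal has positive $\mu$-invariant, the rational divisibility says nothing at the prime $(p)$, and an integral $\cL_f^{(p-r)}$ can still fail to lie in it (take $\Char=(pg)$ and $\cL=g$ with $g\notin p\Lambda_\bQ$). The reduction could be repaired by invoking $\mu^{\alg}_{f\otimes\omega^{p-r}}=0$ from the earlier Selmer-group analysis, but you do not do so; and in any case the paper's logic runs in the opposite direction: the integral divisibility is established directly via Wuthrich's criterion, and $\cL_f^{(p-r)}\in\Lambda_\bQ$ is deduced as a corollary, not used as an input.

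Second, your route to integrality of the zeta element---which you correctly identify as the crux---would not close as sketched. A residual comparison of $z_f^{-}$ with the zeta element of the character $\psi'$ via $F^-\ol T_f\simeq\F(\psi')$ only sees the class modulo the maximal ideal and cannot bound denominators, which is what integrality requires; and the claim that the Coleman map on the unramified quotient ``manifestly'' preserves integrality is precisely the nontrivial local statement at issue. The paper instead verifies Wuthrich's two hypotheses: (i) $\H^1(\bQ^\Sigma/\bQ,T(f)\otimes\Lambda_\bQ)$ is $\Lambda_\bQ$-free, proved by a short Nakayama argument resting on $\H^0(\bQ^\Sigma/\bQ,\ol{\rho(f)})=0$, which follows from~\eqref{GV'} since $\phi'|_{G_p},\psi'|_{G_p}\ne\mathbf{1}$; and (ii) local integrality, obtained not by a new computation but by citing the lattice-independence of local integrality together with Kato's Theorem 13.14 for his canonical lattice, then passing to the $(p-r)$-th component. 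Neither Fontaine--Laffaille theory nor a comparison with character zeta elements enters. A smaller slip: for the self-dual twist $V_f=\rho_f(1-r)$ the quotient $\Fil^-$ is not unramified when $r>1$; only the untwisted representation (the one relevant to $M_f'$) has an unramified quotient, so your filtration statement needs to specify which representation it concerns.
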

\begin{proof}
	This result would follow from the same arguments as in~\cite[Theorem 16]{Wut17} if we consider the $(p-r)$-th component of all the Iwasawa modules over $\widehat{\Lambda_\bQ}$, provided that we can show that (analogs of conditions in \textit{op. cit.}):
	\begin{enumerate}
		\item There is a Galois stable $\cO$-lattice $T(f)$ of $V(f)$ for which $\H^1(\bQ^\Sigma/\bQ, T(f)\otimes\Lambda_\bQ)$ is $\Lambda_\bQ$-free;
		\item The local integrality holds for this lattice, i.e., in the notation of \textit{op. cit.}, $(Z(T)_{(p-r),\frP})\subset \bH^1(T)_{(p-r),\frP}$ for all height $1$ prime $\frP$ of $\Lambda_\bQ$ (see~\cite[Lemma 12]{Wut17}).
	\end{enumerate} 
	where $V(f)$ is the $p$-adic representation attached to $f$ in~\cite{Wut17} (also in~\cite{Kato}), which is dual to our $\rho_f$. In particular, a choice of a Galois stable lattice $T(f)$ in $V(f)$ corresponds to a choice of $T_f$ in $V_f=\rho_f(1-r)$ via the relation $T(f)^\vee(1-r)=T_f$. In fact, we will show that any lattice $T_f$ satisfying~\eqref{GV'} would work. 
	
	Now let $T_f$ be any lattice such that satisfies~\eqref{GV'} (so for the residual representation of $T(f)$, neither the sub-representation nor the quotient-representation is trivial) and let $\H^1(\bT)\defeq\H^1(\bQ^\Sigma/\bQ, T(f)\otimes\Lambda_\bQ)$. We claim that $\H^1(\bT)[T]=0$ and $\H^1(\bT)/T$ is $\cO$-free, from which the result would follow from Nakayama's Lemma (see e.g.~\cite[Lemma 1.1.2]{KY24}). From the long coholomogy sequence applied to the short exact sequence\[
	0\to\bT\xrightarrow{\cdot T}\bT\to \bT/T(=T(f)) \to 0,
	\]
	we know there is a surjection $\H^0(\bQ^\Sigma/\bQ, T(f))\twoheadrightarrow \H^1(\bT)[T]$ and that $(\H^1(\bT)/T)_\tors$ is contained in $\H^1(\bQ^\Sigma/\bQ,T(f))_\tors$. However, $\H^0(\bQ^\Sigma/\bQ,T(f))=0$ since it's $\cO$-torsion free and $\H^0(\bQ^\Sigma/\bQ,T(f))/\frp\subset\H^0(\bQ^\Sigma/\bQ,T(f)/\frp)=\H^0(\bQ^\Sigma/\bQ,\ol{\rho(f)})=0$. The last equality follows from the fact that $\phi'|_{G_p},\psi'|_{G_p}\ne\mathbf{1}$. $\H^1(\bQ^\Sigma/\bQ,T(f))_\tors$ is also trivial since there is a surjection $0=\H^0(\bQ^\Sigma/\bQ, \ol{\rho(f)})\twoheadrightarrow\H^1(\bQ^\Sigma/\bQ,T(f))[\frp]$. Thus $\H^1(\bT)[T]=0$ and $\H^1(\bT)/T$ is $\cO$-torsion free (hence $\cO$-free). Hence (i) is satisfied.
	
	In fact, one can show (ii) holds for any lattice. By~\cite[Theorem 8]{ver23} (see also the end of section 2 in \textit{loc. cit.}), the local integrality will hold for any lattice over $\widehat{\Lambda_\bQ}$ if it holds for one. It is shown in~\cite[Theorem 13.14]{Kato} for Kato's `canonical lattice' $T_{\cO_\lambda}(f)$, so it indeed holds for any lattice. Now the same holds for $\Lambda_\bQ$ if we take the $(p-r)$-th component of the Iwasawa modules. Hence (ii) is also satisfied.

\end{proof}

\begin{remark}
	Another application of~\cref{Rubintwist} (with equality replaced by one divisibility) shows that the integral divisibility with the $(p-r)$-th branch $p$-adic $L$-function for $V(f)$ above is equivalent to an integral divisibility for the self-dual twist $V_f$.
\end{remark}

\subsection{The cyclotomic Main Conjectures}\label{cycMCp-r}
We now closely follow~\cite[Section 3.4]{Hir18} to study the cyclotomic Main Conjecture for $f\otimes\omega^{p-r}$. Namely, we want to show that\begin{equation*}\tag{(p-r)-th IMC}\label{p-rMC}
	\Char_\Lambda(\Sel(\bQ_\infty,A_{f,p-r})^\vee)=(\sL_p(f,\omega^{p-r},T))\text{ in }\Lambda.
\end{equation*}
The divisibility $\Char_\Lambda(\Sel(\bQ_\infty,A_{f,p-r})^\vee)\supset(\sL_p(f,\omega^{p-r},T))\text{ in }\Lambda\otimes\bQ_p$ is a consequence of Kato's equivariant divisibility specialized to the $(p-r)$-th branch. We will now compare the Iwasawa invariants of both sides to show that this must in fact be an equality. However, using existing arguments, we cannot show the equality in $\Lambda$.

The study of the analytic side is roughly done by relating the following $p$-adic $L$-functions:\[
\sL_f\sim\sL_g\sim\sL_G\sim\sL_\phi\sL_\psi, \] where $g$ is the eigenform defined by \[f\otimes \mathbf{1}_N=\sum_{(n,N)=1}a_n(f)q^n\] and $G$ is an Eisenstein series built from $\phi$ and $\psi$.

We first consider the $p$-adic $L$-functions for the two characters $\phi$ and $\psi$, similar to~\cite[equation (26)]{GV00}. The $p$-adic $L$-function $\sL_p(A_\phi,T)=\sL(\widehat{\phi'},\omega^{p-r},T)$ for $\phi$ is characterized by the interpolation property\[
\sL(\widehat{\phi'},\omega^{p-r},\zeta-1)=L(\widehat{\phi'}\omega^{p-r}\rho,1)=L(\widehat{\psi'}^{-1}\omega^{p-r}\rho,2-k)
\]for every non-trivial character $\rho$ of $\Gamma$ of finite order, and $\zeta=\rho(\gamma)$ where $\gamma$ is a fixed topological generator of $\Gamma$. It is related to the Kubota--Leopoldt $p$-adic $L$-function $L_p(\chi_\cyc^{k-2}\widehat{\psi'}^{-1}\omega^{p-r+1},s)$ by \[L_p(\chi_\cyc^{k-2}\widehat{\psi'}^{-1}\omega^{p-r+1},s)=\sL(\widehat{\phi'},\omega^{p-r},\langle\chi_\cyc\rangle(\gamma)^{-s}-1)\] for all $s\in \bZ_p$. By the Ferrero--Washington theorem, $\sL_p(A_\phi,T)\not\in p\Lambda$, and the Mazur--Wiles theorem implies that the $\lambda$-invariant of $\sL_p(A_\phi,T)$ is equal to the $\corank_\cO(S_{A_\phi}(\bQ_\infty))$, which is denoted by $\lambda_\phi$. The imprimitive $p$-adic $L$-function for $\phi$ is defined by\[
\sL^{\Sigma_0}_p(A_\phi,T)=\sL_p(A_\phi,T)\prod_{\ell\in\Sigma_0}(1-\widehat{\psi'}^{-1}\omega^{p-r}(\ell)\ell^{k-2}(1+T)^{f_\ell}).
\]
Here $f_\ell\in\bZ_p$ is determined by $\gamma_\ell=\gamma^{f_\ell}$, where $\gamma_\ell$ is the Frobenius element corresponding to the prime $\ell\in\Gamma$.

The $p$-adic $L$-function $\sL_p(A_\psi,T)=\sL(\widehat{\psi'},\omega^{p-r},T)$ for $\psi$ is characterized by the interpolation property\[
\sL(\widehat{\psi'},\omega^{p-r},\zeta-1)=\tau(\omega^{r-1}\widehat{\psi'}^{-1}\rho^{-1})L(\widehat{\psi'}\omega^{p-r}\rho,1)/(2\pi i)=\frac{1}{2}L(\omega^{r-1}\widehat{\psi'}^{-1}\rho^{-1},0)
\]
for every non-trivial character $\rho$ of $\Gamma$ of finite order. Here $\tau(-)$ denotes the Gauss sum.

$\sL_p(A_\psi,T)$ is related to the Kubota--Leopoldt $p$-adic $L$-function $L_p(\widehat{\psi'}^{-1}\omega^{r},s)$ by\[
L_p(\widehat{\psi'}^{-1}\omega^{r},s)=\frac{1}{2}\sL(\widehat{\psi'},\omega^{p-r},\langle\chi_\cyc\rangle(\gamma)^{-s}-1)
\]
for any $s\in \bZ_p$. Again, by the Ferrero--Washington theorem, $\sL_p(A_\psi,T)\not\in p\Lambda$ and by the Mazur--Wiles theorem, its $\mu$ invariant is equal to $\corank_{\cO}(\Sel_{A_\psi}(\bQ_\infty))$. Furthermore, we define the imprimitive $p$-adic $L$-function by\[
\sL^{\Sigma_0}_p(A_\psi,T)=\sL_p(A_\psi,T)\prod_{\ell\in\Sigma_0}(1-\widehat{\psi'}\omega^{p-r}(\ell)\ell^{-1}(1+T)^{f_\ell}).
\]

We now define the $p$-adic $L$-function for the Eisenstein series $G$ associated to $\phi$ and $\psi$ in the sense of~\cite[Lemma 3.18]{Hir18}, similar to~\cite[equation (28)]{GV00}. More precisely, $G$ is determined by\[
L(G,s)=L_{\Sigma_0}(\widehat{\psi'},s)L_{\Sigma_0}(\widehat{\psi'}^{-1},s-k+1).
\]
The $p$-adic $L$-function $\sL_p(G,T)=\sL(G,\omega^{p-r},T)$ is characterized by the interpolation property\begin{align*}
	\sL(G,\omega^{p-r},\zeta-1)&=\tau(\omega^{r-1}\widehat{\psi'}^{-1}\rho^{-1})L(G,\omega^{p-r}\rho,1)/(2\pi i)\\&=(L_{\Sigma_0}(\widehat{\psi'}^{-1}\omega^{p-r}\rho,2-k))(\tau(\omega^{r-1}\widehat{\psi'}^{-1}\rho^{-1}) L_{\Sigma_0}(\widehat{\psi'}\omega^{p-r}\rho,1)/(2\pi i))
\end{align*}
for every non-trivial character $\rho$ of $\Gamma$ of finite order. It is then clear that \[\sL_p(G,T)=\sL^{\Sigma_0}_p(A_\phi,T)\sL^{\Sigma_0}_p(A_\psi,T).\]
Thus the $\mu$-invariant of $\sL_p(G,T)$ is zero since it's true for both factors and its $\lambda$-invariant is $\lambda_{A_\phi,\Sigma_0}+\lambda_{A_\psi,\Sigma_0}$ which agree with $\lambda^\alg_{f\otimes\omega^{p-r},\Sigma_0}$ from the subsubsection on Selmer groups.

It remains to relate $\sL(f,\omega^{p-r},T)$ to $\sL(G,\omega^{p-r},T)$ via $\sL(g,\omega^{p-r},T)$ as in~\cite[Theorem 3.19]{Hir18}. From now on, we assume that\begin{equation*}\tag{restricted weights}
	2\leq k\leq p-1.
\end{equation*} Until the end of this subsubsection, all references are to \textit{op. cit.}. First, we see that there is a relation\begin{equation}\label{LfLg}
	\sL_p(g,T)=\frac{\Omega^+_f}{\Omega^+_g}\sL(f,T).
\end{equation}
The congruence $f\equiv g\equiv G\pmod{\pi}$ still holds since our $f$, $g$ and $G$ are the same as those in \textit{op. cit.}, just with twisted $p$-adic $L$-functions. The proof remains the same before the application of Theorem 2.10, where now we apply the theorem to the triples $(g,G,\omega^{p-r}\rho)$ instead of $(g,G,\rho)$, and there is a unit $u'\in \cO^\times$ such that there is a congruence\[
\tau(\overline{\omega^{p-r}\rho})\frac{L(g,\omega^{p-r}\rho,1)}{(2\pi i)\Omega^+_g}\equiv u'\tau(\overline{\omega^{p-r}\rho})\frac{L(G,\omega^{p-r}\rho,1)}{2\pi i} \pmod{\pi}.
\] 
for every character $\rho$ of $\Gamma$ of finite order such that $\omega^{p-r}\rho$ is non-trivial and non-exceptional with conductor $p^{\nu_p}$. There clearly exist infinitely many such $\rho$ since we can simply consider characters of the form $\omega^{r-p}\rho_0$ with $\rho_0$ non-trivial and non-exceptional.

Now from the same computation in Theorem 3.19, we have\begin{align*}
	\sL(g,\omega^{p-r},\zeta-1)&=\tau(\omega^{r-1}\rho^{-1})\alpha(p,g)^{-\nu_p}\frac{L(g,\omega^{p-r}\rho,1)}{(2\pi i)\Omega^+_g}\\
	&\equiv u'\tau(\omega^{r-1}\rho^{-1})\alpha(p,g)^{-\nu_p}\frac{L(G,\omega^{p-r}\rho,1)}{(2\pi i)}\\
	&\equiv u'\tau(\overline{\widehat{\psi'}})^{-1}\overline{\omega^{p-r}\rho}(m_\psi)^{-1}\tau(\overline{\widehat{\psi'}}\overline{\omega^{p-r}}\overline{\rho})\frac{L(G,\omega^{p-r}\rho,1)}{(2\pi i)}\\
	&\equiv u'\tau(\overline{\widehat{\psi'}})^{-1}(1+T)^{-n_\psi}\sL(G,\omega^{p-r},\zeta-1)\pmod{\pi}
\end{align*}
for every $\zeta=\rho(\gamma)\ne 1$ since $\overline{\theta}=\theta^{-1}$ for any of the characters. 

Thus Theorem 3.19 also holds in our situation, and we get the equalities \[\lambda^\an_{f\otimes\omega^{p-r},\Sigma_0}=\lambda(\sL(G,\omega^{p-r},T))=\lambda_{A_\phi,\Sigma_0}+\lambda_{A_\psi,\Sigma_0}=\lambda^\alg_{f\otimes\omega^{p-r},\Sigma_0}\] and \[\mu^\an_{\sL_p(g,T)}=\mu(\sL(G,\omega^{p-r},T))=0=\mu^\alg_{f\otimes\omega^{p-r},\Sigma_0}.\]
However, we fail to get information about $\mu^\an(\sL_p(f,T))$ because we can't determine the order of $p$ in $\frac{\Omega^+_f}{\Omega^+_g}$ in equation~\eqref{LfLg}. Note that in the case of elliptic curves, we know the quotient is a $p$-adic unit thanks to~\cite[Lemma 3.6]{GV00}.

Thus from Theorem 3.17, we also obtain the equalities in the primitive setting:\[
\lambda^\an_{f\otimes\omega^{p-r}}=\lambda^\alg_{f\otimes\omega^{p-r}}
\] 
which combined with Kato's divisibility finished the proof of the $(p-r)$-th branch of the cyclotomic Main Conjecture~\eqref{p-rMC}, up to $p$-powers. Thus we obtain the partial result towards the cyclotomic Main Conjecture for the self-dual representation $\rho_f(1-r)$ from~\cref{Rubintwist}. In summary, we have proved the following:
\begin{theorem}[cyclotomic IMC with self-dual twist]\label{cycMCselfdual}
	Let $f\in S_{2r}^{new}(\Gamma_0(N))$ be a newform. Let $p\nmid 2N$ be an Eisenstein prime for $f$, i.e., such that $\ol\rho_f$ is reducible. Assume that $2\leq 2r\leq p-1$ and that $r$ is odd. Further assume~\eqref{GV'}. Then 
	\[\Char_{\Lambda_\bQ}(\H^1_\Gr(\bQ,M_f)^\vee)=(\cL_f^\MTT(\langle\chi_\cyc\rangle^r\cdot-))\text{ in }\Lambda\otimes\bQ_p.\]Here $\H^1_\Gr(\bQ,M_f)$ is the Selmer group for the self-dual representation $V_f=\rho_f(1-r)$ to be defined in~\cref{section:control}, and $\cL_f^\MTT(\langle\chi_\cyc\rangle^r\cdot-)$ is the Mazur--Tate--Teitelbaum $p$-adic $L$-function corresponding to the self-dual twist $\rho_f(1-r)$.
\end{theorem}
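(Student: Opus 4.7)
The plan is to view the stated theorem as a consequence of the $(p-r)$-th branch cyclotomic Main Conjecture for $\rho_f$ (before self-dual twist), which was developed in detail in \cref{p-rSel}--\cref{cycMCp-r}, followed by an application of the twist \cref{Rubintwist}. Concretely, I would first assemble the equality
\[
\Char_\Lambda\bigl(\Sel(\bQ_\infty,A_{f,p-r})^\vee\bigr)=\bigl(\sL_p(f,\omega^{p-r},T)\bigr)\quad\text{in }\Lambda\otimes\bQ_p,
\]
and then transport it to the self-dual representation via the identity $\rho_f\otimes\omega^{p-r}\otimes\langle\chi_\cyc\rangle^{1-r}=\rho_f(1-r)=V_f$.

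For the first step, I would invoke Kato's equivariant divisibility, restricted to the $(p-r)$-th component of $\widehat{\Lambda_\bQ}$, to obtain the divisibility
\[
\Char_\Lambda\bigl(\Sel(\bQ_\infty,A_{f,p-r})^\vee\bigr)\supset\bigl(\sL_p(f,\omega^{p-r},T)\bigr)
\]
in $\Lambda\otimes\bQ_p$; by \cref{intdiv}, under~\eqref{GV'} this divisibility is in fact integral. To promote it to an equality in $\Lambda\otimes\bQ_p$, I would compare Iwasawa invariants on both sides. On the algebraic side, since $H^0(\bQ,\ol\rho_f\otimes\omega^{p-r})=0$ under~\eqref{GV'}, the arguments of \cref{p-rSel} (based on~\cite[Theorem 4.1.1]{EPW06} together with the Selmer group comparisons of~\cite{Hir18}) yield $\mu^\alg_{f\otimes\omega^{p-r},\Sigma_0}=0$ and
\[
\lambda^\alg_{f\otimes\omega^{p-r},\Sigma_0}=\lambda_{\phi,\Sigma_0}+\lambda_{\psi,\Sigma_0}.
\]

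On the analytic side, I would exploit the congruences $f\equiv g\equiv G\pmod{\pi}$, where $g$ is the $N$-depleted eigenform and $G$ is the Eisenstein series built from $\phi'$ and $\psi'$, to chain together the identity $\sL_p(g,T)=(\Omega_f^+/\Omega_g^+)\sL_p(f,T)$ with Hirano's mod-$\pi$ congruence between $\sL_p(g,\omega^{p-r},T)$ and $\sL_p(G,\omega^{p-r},T)$. Here the key technical check is that the relevant auxiliary finite-order characters $\rho$ such that $\omega^{p-r}\rho$ is non-trivial and non-exceptional exist in abundance (take $\rho=\omega^{r-p}\rho_0$ for suitable $\rho_0$), so the interpolation argument of~\cite[Theorem 2.10, Theorem 3.19]{Hir18} goes through verbatim for our twisted branch. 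Combined with the factorization $\sL_p(G,\omega^{p-r},T)=\sL_p^{\Sigma_0}(A_\phi,T)\sL_p^{\Sigma_0}(A_\psi,T)$ and Ferrero--Washington plus Mazur--Wiles, this gives $\mu^\an_{\sL_p(g,T)}=0$ and the matching of $\lambda$-invariants with the algebraic side. These together force the divisibility above to be an equality in $\Lambda\otimes\bQ_p$.

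Finally, applying \cref{Rubintwist} with the character $\rho=\langle\chi_\cyc\rangle^{1-r}$ transports the equality to the self-dual twist. Since $\rho_f\otimes\omega^{p-r}\otimes\langle\chi_\cyc\rangle^{1-r}=V_f$ by construction, the Selmer side becomes $\H^1_\Gr(\bQ,M_f)^\vee$ and the analytic side becomes precisely $\cL_f^\MTT(\langle\chi_\cyc\rangle^r\cdot-)$, yielding the claimed equality in $\Lambda\otimes\bQ_p$. The main obstacle, which the theorem accommodates by stating the equality only up to $p$-powers, is the failure to control $\ord_p(\Omega_f^+/\Omega_g^+)$: without an analogue of~\cite[Lemma 3.6]{GV00} in higher weight, the $\mu$-invariant on the analytic side cannot be pinned down, preventing an equality in $\Lambda$ itself.
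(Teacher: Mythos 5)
Your proposal is correct and follows essentially the same route as the paper: Kato's equivariant divisibility restricted to the $(p-r)$-th branch, the comparison of algebraic and analytic Iwasawa invariants via the congruences $f\equiv g\equiv G\pmod{\pi}$ and the adaptation of Hirano's interpolation arguments to the twisted branch, and finally \cref{Rubintwist} to transport the equality to the self-dual representation. Your identification of the uncontrolled period ratio $\Omega_f^+/\Omega_g^+$ as the sole obstruction to upgrading the equality from $\Lambda\otimes\bQ_p$ to $\Lambda$ also matches the paper's discussion exactly.
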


We now discuss some potential applications of the above Main Conjecture. If we let $\cF\in\Lambda_\bQ$ be a generating power series of $\H^1_\Gr(\bQ,M_f)^\vee$, then~\cref{cycMCselfdual} says that $\cF\sim_p\cL\defeq\cL_f^\MTT(\langle\chi_\cyc^r\rangle\cdot-)$ in $\Lambda\otimes\bQ_p$. If we can prove $\mu(\cL)=0$ (say, if we can show $\Omega^+_f\sim_p\Omega^+_g$), we will get an equality in $\Lambda$. With~\cref{intdiv} at hand, we in fact only need a slightly weaker assumption:\begin{equation*}\tag{per}\label{per}
	\ord_p(\frac{\Omega^+_f}{\Omega^+_g})\geq 0
\end{equation*}

On the other hand, from the interpolation property of the $p$-adic $L$-function $\cL_f^{\MTT,(p-r)}$ (see e.g.~\cite[Theorem 16.2(ii)]{Kato}), one knows\begin{equation}\label{cycpL}
	\cL(0)\defeq \cL_f^\MTT(\langle\chi_\cyc\rangle^r\cdot\mathbf{1})=\cL_f^\MTT(\langle\chi_\cyc\rangle^{r})=(1-\frac{p^{r-1}}{\alpha})^2\cdot\frac{(r-1)!(2\pi i)^{r-1}}{\Omega_f}\cdot L(f,r)
\end{equation}
where $\Omega_f=\Omega^+_f$ is the period since $r$ is odd.

In~\cref{section:control}, we will study $\cF(0)\defeq \cF(\mathbf{1})$ and show that it is consistent with what we get on the analytic side towards the Tamagawa Number Conjecture.

\section{Heegner cycles and $p$-adic Abel--Jacobi maps}\label{2}
In this section, we introduce the \textit{Heegner cycles}, which are certain cycles in the Chow group of the Kuta--Sato variety whose images under the $p$-adic Abel--Jacobi in the Bloch--Kato Selmer groups are expected to be non-torsion. They appear in the ($p$-adic) Gross--Zagier formulae and eventually allow us to control the Tate-Shafarevich group.

\subsection{Kuga--Sato varieties}
We start by considering the Kuga--Sato varieties in two similar settings. Let $\cE(N)\to X(N)$ be the universal generalized elliptic curve over the compact modular curve $X(N)$ of level $\Gamma(N)$. The Kuga--Sato variety $\tilde{\cE}^{2r-2}(N)$ is then defined as the canonical desingularization of the $(2r-2)$nd fiber product of $\cE(N)$ with itself over $X(N)$.

In~\cite[section 2.1]{BDP13}, a different universal generalized elliptic curve $\cE_1(N)\to X_1(N)$ was considered, where $X_1(N)$ is the compact modular curve of level $\Gamma_1(N)$, thus yielding a different Kuga--Sato variety $W_{2r-2}\defeq\tilde{\cE}^{2r-2}_1(N)$ constructed in the same way. Cycles constructed from different Kuga--Sato varieties have been considered by different people. We will compare these cycles near the end of this work. In fact,~\cite{BDP13} introduced a generalization of the Kuga--Sato varieties.

\subsection{Masoero's Heegner cycles over $\Gamma(N)$}\label{Macycle}
In this section, we introduce Heegner cycles over $\Gamma(N)$ constructed in~\cite[Section 4.1]{Mas} (see also~\cite[Section 5.2]{Thackeray2022}). Again let $K=\bQ(\sqrt{-D})$ be an imaginary quadratic field satisfying~\cref{assum}.

The $N$-isogeny $\bC/\cO_K\to\bC/\frN^{-1}$ induces a Heegner point $x_1\in X_0(N)$ which is rational over the Hilbert class field $K_1$ of $K$ by the theory of complex multiplication. A lift $x\in\pi^{-1}(x_1)$ of $x_1$ under the canonical projection $\pi: X(N)\to X_0(N)$ corresponds to an elliptic curve $E_x$ of full level $N$ and complex multiplication by $\cO_K$. Fix the unique square root $\sqrt{-D}$ with positive imaginary quadratic part. Let $\Gamma_{\sqrt{-D}}\in E_x\times E_x$ be the graph of $\sqrt{-D}$ and let $i_x:\ol\pi^{-1}_{2r-2}(x)=E_x^{2r-2}\hookrightarrow \tilde{\cE}^{2r-2}(N)$. 

We call \[\Delta_N\defeq\Pi_B\Pi_\eps (i_x)_*(\Gamma_{\sqrt{-D}}^{r-1})\in\Pi_B\Pi_\eps\CH^r(\tilde{\cE}^{2r-2}(N)/K_1\otimes\bZ_p)\] Masoero's cycle, where the projector $\Pi_B\Pi_\eps$ are as in~\cite[Section 2.1, Section 3.1]{Mas} (see also~\cite[Section 5.1]{Thackeray2022}).

\subsection{Zhang's cycles}\label{zhangcycle}
For later use, we also briefly explain the cycles constructed by Zhang in~\cite{Zhang1997} built from Heegner cycles from the previous section.  We follow the arguments in~\cite[Section 4.1 and Section 4.2]{LV23}. We denote $\tilde{\cE}^{2r-2}(N)$ by $W'_{2r-2}$.

Let $E_x$ be the elliptic curve as before and let $Z(x)$ be the divisor class on $E_x\times E_x$ of $\Gamma_{\sqrt{-D}}-E_x\times\{0\}\cup\{0\}\times E_x$. Let $\tilde{\Gamma}$ denote the cycle\[\Pi_B\Pi_\eps (i_x)_*(Z(x)^{r-1})\in\Pi_B\Pi_\eps\CH^r(W'_{2r-2}/K_1)
\]

Let $W_{2r}(E_x)$ denote the cycle\[
\sum_{g\in G_{2r-2}}\sgn\ g^*(Z(x)^{r-1})\in \CH^{r}(W'_{2r-2})_\bQ,
\]
where $G_{2r-2}$ denotes the symmetric group of $2r-2$ letters which acts on $E_x^{2r-2}$ by permuting the factors. 

Then from Lemma 4.1 in \textit{op.\ cit.}, we get the relation\begin{equation}\label{zhangcyc}
\tilde{\Gamma}=\frac{\Pi_B W_r(E_x)}{(2r-2)!} \in \Pi_B \Pi_\epsilon \CH^{r}(W'_{2r-2})_\bQ.\end{equation}

Zhang's cycle $S_{2r}(E_x)$ with real coefficients is defined by\[
S_{2r}(E_x)\coloneq c\cdot W_{2r}(E_x),
\]
where $c\in \bR$ is a positive constant such that the self-intersection of $S_{2r}(E_x)$ on each fiber is equal to $(-1)^{r-1}$. In fact, from~\cite[Equation (4.12)]{LV23}, we know\[
c=\frac{1}{(r-1)!\cdot\sqrt{(2r-2)!}\cdot (\sqrt{-2D_K})^{r-1}}.
\]

Zhang's cycles are closely related to Masoero's cycles, as we will see in~\cref{cyclecomp}.

\subsection{BDP's Heegner cycles over $\Gamma_1(N)$}\label{2.2}

In this subsection we study a special case of the \textit{generalized Heegner cycles} introduced in~\cite{BDP13}. We will follow the construction in~\cite[Section 4]{BDP2017}. Note that for our purpose, it is enough to consider the \textit{classical} Heegner cycles corresponding to $r_1=2r-2,r_2=0$ in the notations of \textit{loc.\ cit.}.

Recall that $f\in S^{new}_{2r}(\Gamma_0(N))$ is a newform of weight $2r\geq 2$. We continue to assume the Heegner hypothesis~\eqref{heeg}, which guarantees the existence of an ideal $\mathcal{N}\subset\cO_K$ with\[
\cO_K/\mathcal{N}\iso \bZ/N\bZ.\]
Let $A=\bC/\cO_K$ be an elliptic curve defined over the Hilbert class field $K_1$ of $K$ with CM by $K$ and fix a generator $t$ of the cyclic group $A[\cN]$. The pair $(A,t)$ then defines a point $P$ on the modular curve $X_1(N)$ which is defined over an abelian extension of $K$.

For an ideal $\fra\in\cO_K$, write $A_\fra$ for the elliptic curve $\bC/\fra^{-1}$ and let $\phi_\fra$ denote the canonical isogeny of degree $N\fra$,\[
\phi_\fra: A=\bC/\cO_K\to\bC/\fra^{-1}=A_\fra.
\]

Let $W_{2r-2}$ be the Kuta--Sato variety of dimension $2r-1$ over $X_1(N)$. We now construct a cycle\[
\Delta_\fra\in\CH^{r}(W_{2r-2}/K_1)_{0,\bQ}
\]
for every ideal $\fra\in \cO_K$ that is prime to $\cN$.

Let $t_\fra$ denote the image of $t$ under the map $\phi_\fra$. Then the pair $(A_\fra,t_\fra)$ defines a point $P_\fra$ on the modular curve $X_1(N)$. The fiber of $W_{2r-2}$ over $P_\fra$ is canonically isomorphic to $A_\fra^{2r-2}$.
Now define\[
\Gamma_\fra=(graph\ of\ \sqrt{-D})^{tr} \in Z^1(A_\fra\times A_\fra)
\]
and let \[
\Delta_\fra\defeq \epsilon_W(\Gamma_\fra^{r-1})\in\CH^{r}(W_{2r-2}/K_1)_\bQ.\]
Here $\epsilon_W$ is the projectors on $W$ described in~\cite[Section 2.1]{BDP13}. It should be noted that $\Delta_\fra$ can be shown to be homologically trivial on $W_{2r-2}$ using the arguments in Section 2.2 and section 2.3 in \textit{op.\ cit.}.

We mention that the above cycle is different from the one considered in~\cite{BDP13} in that they consider the cycles $\Delta_\fra^\BDP$ corresponding to $r_1=r_2=2r-2$ that live in $\CH^{4r-3}(X_{2r-2}/K_1)_{0,\bQ}$, where $X_{2r-2}=W_{2r-2}\times A^{2r-2}$. According to~\cite[Section 2.4]{BDP13}, $\Delta_\fra^\BDP$ contains at least as much information as $\Delta_\fra$. We will come back to the comparison of the cycles in~\cref{cyclecomp}.

Finally, we remark that the Heegner point $\kappa_\infty$ in~\cref{IMC}(IMC1) can be compared to certain Heegner class $\kappa_1$ (see~\cite[Remark 4.1.3]{CGLS}) which in turn is essentially constructed from the cycles $\Delta_\fra^\BDP$ (see for example, ~\cite[section 4]{CastellaHsieh}). In~\cref{cyclecomp} we will see that under some reasonable hypothesis, (Abel--Jacobi image of) $\Delta_\fra$ is non-torsion if and only if (that of) $\Delta_\fra^\BDP$ is non-torsion.

\subsection{The Bloch--Kato logarithm}\label{log}
To define the $p$-adic Abel--Jacobi maps, we first recall the Bloch--Kato logarithm studied in~\cite{BK07}. We first recall some definitions from $p$-adic Hodge theory.

Let $F$ be a finite extension of $\bQ_p$ and let $V$ be a finite dimensional $G_F$-representation. Let $\bB_\dR$ be Fontaine's ring of $p$-adic periods and let $\bD_\dR(V)\coloneq (V\otimes_{\bQ_p}\bB_\dR)^{G_F}$. Then $\bD_\dR(V)$ is a $F$-vector space equipped with a decreasing filtration\[
\{\Fil^r\bD_\dR(V)\}_{r\in\bZ}\]
satisfying $\cup\Fil^r\bD_\dR(V)=\bD_\dR(V)$ and $\cap\Fil^r\bD_\dR(V)=0$. We say that $V$ is a de Rham representation if $\dim_{\bQ_p}(\bD_\dR(V))=\dim_{\bQ_p}(V)$.

For a de Rham representation, The Bloch--Kato exponential map is a morphism\[
\exp_{F,V}: \frac{\bD_\dR(V)}{\Fil^0\bD_\dR(V)}\hookrightarrow \H^1(F,V)\]with image $\H^1_e(F,V)\subset\H^1(F,V)$.

The Bloch--Kato finite subspace $\H^1_f(F,V)\subset \H^1(F,V)$ is defined as\[
\H^1_f(F,V)\coloneq \ker(\H^1(F,V)\to \H^1(F,V\otimes_{\bQ_p}\bB_\cris))\]where $\bB_\cris\subset \bB_\dR$ is the ring of crystalline periods. Let $\bD_\cris(V)\coloneq(V\otimes_{\bQ_p}\bB_\cris)^{G_F}$. Then $\bD_\cris$ is a $F_0$-vector space equipped with a crystalline Frobenius action $\Phi$, where $F_0$ is the maximal unramified extension of $\bQ_p$ in $F$. Suppose $\bD_\cris(V)^{\Phi=1}=0$ where $\Phi$ is the Frobenius operator, then one could identify $\H^1_e(F,V)$ with $\H^1_f(F,V)$. Moreover, $\exp_{F,V}$ would become an isomorphism onto its image $\H^1_e(F,V)$.

If $V$ is a de Rham representation with $\bD_\cris(V)^{\Phi=1}=0$, then the Bloch--Kato logarithm is defined by the inverse of $\exp_{F,V}$\[
\log_{F,V}:\H^1_f(F,V)\xrightarrow{\isom} \frac{\bD_\dR(V)}{\Fil^0\bD_\dR(V)}\]

For our application, we will let $F$ be a finite extension of the completion of $\bQ(f)$ at a prime $\frp\mid p$ and let $V$ be the self-dual twist of $V_f$ as in the introduction. In particular, all assumptions above are satisfied and the logarithm maps extends to\[
\log_{F,V}:\H^1_f(F,V)\xrightarrow{\isom} \frac{\bD_\dR(V)}{\Fil^0\bD_\dR(V)}\iso (\Fil^1(\bD_\dR(V)))^\vee\iso F\] 
where the middle isomorphism is given by the de Rham cup product pairing\[
<,>:\bD_\dR(V)\times \bD_\dR(V)\to F\] with respect to which $\Fil^0(\bD_\dR(V))$ and $\Fil^1(\bD_\dR(V))$ are exact annihilators of each other.

One could choose a differential $\omega$ in $\Fil^1\bD_\dR(V)$, thus defining a map $\log_\omega:\H^1_f(F,V)\to F$ by composing $\log_{F,V}$ with evaluation at $\omega$.

\subsection{$p$-adic Abel--Jacobi maps}\label{AJ}
Similar to the Heegner cycles, one can study $p$-adic Abel--Jacobi maps in different settings. In this section we discuss some background, and the exact maps that are referred to as the $p$-adic Abel--Jacobi maps will be made clear in the next section.

\subsubsection{$p$-adic Abel--Jacobi map over $\Gamma(N)$}\label{pAJ/N}
Here we briefly recall the $p$-adic Abel--Jacobi map discussed in~\cite{Mas}. 

Recall the $p$-adic sheaf $\cF$  over $Y(N)$ in Section 2.1 in \text{loc. cit.} defined by \[\cF\defeq\varprojlim_n\Sym^{2r-2}(R^1\pi_*(\bZ/p^n))(r-1).\] 

Let \[J_p\defeq \Pi_B\H^1_{\text{\'e}t}(X_N\otimes \ol\bQ,j_*\cF)(r).\] 

Then the Hecke algebra $\mathbb{T}$ over $\bZ$ generated by the Hecke operators $T_\ell$ acts on $J_p$. If we write $I_f$ for the kernel of the map $\mathbb{T}\to\cO_{\bQ(f)}$ sending $T_\ell$ to $a_\ell$, one knows the continuous $G_\bQ$-representation \[A_p\defeq \{x\in J_p|I_f\cdot x=0\}
\] is $\cO$-free of rank $2$ by~\cite[Proposition 3.1]{Nek92}. In fact, $A_p\otimes F$ is identified with the self-dual twist of Deligne's representation attached to $f$, which in turn can be identified with our $V_f$ (see~\cite[Section 5.4]{Thackeray2022}). Thus we can think of $A_p\otimes \cO$ as a Galois stable lattice $T_f$ of $V_f$.

One knows that there is a map (eq. (3) in~\cite{Mas})\[
\H^1_{\textbf{\'e}t}(W'_{2r-2}\otimes\ol\bQ,\bZ_p(r))\to J_p\to A_p\]

For any number field $L$, there is a $p$-adic Abel--Jacobi map defined by (see~\cite[Section 5.6]{Thackeray2022}) \[
\Phi:\CH^r(W'_{2r-2}/L)_0\otimes_\bZ\bZ_p\to\H^1(L,\H^1_{\textbf{\'e}t}(W'_{2r-2}\otimes\ol L,\bZ_p(r)).\]

Composing $\Phi$ with the map that is $H^1(L,\cdot)$ of the above map and then applying $\otimes \cO$ or $\otimes F$ give the maps
\[
\AJ_L^f:\CH^r(W'_{2r-2}/L)_0\otimes \cO\to \H^1(L,T_f).\]
and
\[
\AJ_L^f:\CH^r(W'_{2r-2}/L)_0\otimes F\to \H^1(L,V_f).\] In particular, from~\cite[Corollary 3.2]{Mas}, one knows the images are in $\H^1_f(L,T_f)$ and $\H^1_f(L,V_f)$ respectively.

\begin{remark}
	Here in the construction, $T_f$ is naturally the `canonical lattice' in $V_f$. Start from now, we will only work with this $T_f$. Note that our setting is that the subrepresentation $\F(\phi)$ of $\ol\rho_f$ is either ramified at $p$ and even, or unramified and odd for one (and hence for all) Galois stable lattice $T$ in $V_f$ so all the results apply to this choice.
\end{remark} 

\subsubsection{$p$-adic Abel--Jacobi map over $\Gamma_1(N)$}
We next consider the $p$-adic Abel--Jacobi map for classical Heegner cycles over $\Gamma_1(N)$ defined in~\cite{BDP2017}.

Let $K$ be an imaginary quadratic field satisfying~\cref{assum}. In particular, $p=v\ol v$ is split in $K$. As in~\cite[Seciton 2.4]{BDP2017}, let $V=\H^{2r-1}((W_{2r-2})_{\ol K},\bQ_p(r))$. Let $V_f$ be the self-dual Galois representation attached to a modular form $f\in S_{2r}^{new}(\Gamma_0(N))$.

Taking $j=r$, the map $\beta_v: \CH^r(W_{2r-2}/K_1)_{0,\bQ}\to (\Fil^{r}H^{2r-1}_{dR}((W_{2r-2})_{K_1,v}))^\vee$ in~\cite[section 2.4]{BDP2017} is the $p$-adic Jacobi map that relates classical Heegner cycles over $\Gamma_1(N)$ to the BDP $p$-adic $L$-function (see~\cref{BDP} or~\cite[Theorem 4.1.3]{BDP2017}).  It is defined as a composition\[
\beta_v\defeq \PD\circ \log_{K_v,V} \circ\delta_{0,v}\]
where $\delta_{0,v}$ is the composition{\small\[
	\CH^r(W_{2r-2}/K_1)_{0,\bQ}\xrightarrow{\delta_0}\H^1(K,\H^{2r-1}((W_{2r-2})_{\ol K},\bQ_p(r))\xrightarrow{res_v}\H^1(K_v,\H^{2r-1}((W_{2r-2})_{\ol K},\bQ_p(r)),\]}
of restriction and $\delta_0=\AJ^{\'et}$ the \'etale Abel--Jacobi map, $\log_{K,V}$ is the Bloch--Kato logarithm in the previous section and $\PD$ denotes Poincar\'e Duality:
\[\PD:\frac{\bD_\dR(V)}{\Fil^0\bD_\dR(V)}=\frac{\H^{2r-1}_\dR((W_{2r-2})_{K_{1,v}})}{\Fil^r \H^{2r-1}_\dR((W_{2r-2})_{K_{1,v}}}\isom (\Fil^r\H^{2r-1}_\dR((W_{2r-2})_{K_{1,v}}))^\vee.\]

The composition makes sense because the image of $\delta_{0,v}$ is contained in the subgroup $\H^1_f(K_v,\H^{2r-1}((W_{2r-2})_{\ol K},\bQ_p(r)))$ by~\cite[Theorem 3.1(i)]{Nek00}.

We mention that the map $\delta_0$ also induces a map $\CH^r(W_{2r-2}/K_1)_{0,\bQ}\to \H^1(K_1,V_f)$ (see~\cite[Section 5.6]{Thackeray2022}).

We recall the differential $\omega_f\in\Fil^r\H^{2r-1}_\dR((W_{2r-2})_{K_{1,v}})=\Fil^{2r-1}\H^{2r-1}_\dR((W_{2r-2})_{K_{1,v}})$ associated to $f$ in~\cite[Corollary 2.3]{BDP13} (see also Lemma 2.2(3) there). The above map $\beta_v$ can be then composed with `evaluation at $\omega_f$'. From the discussion at the end of the last subsection, one can also view $\omega_f$ as in $\Fil^1\bD_\dR(V)$.

Finally, we mention that one can also make sense of the above maps with $X_{2r-2}$ in place of $W_{2r-2}$, as is the case in\cite{BDP13}. For example, one can define \[\delta^\BDP_0:
\CH^{2r-2}(X_{2r-2}/K_1)_{0,\bQ}\to\H^1(K,\H^{4r-3}((X_{2r-2})_{\ol K},\bQ_p(2r-1)),
\]and\[
\beta_v^\BDP: \CH^{2r-1}(X_{2r-2}/K_1)_{0,\bQ}\to (\Fil^{2r-1}H^{4r-3}_{dR}((X_{2r-2})_{K_{1,v}}))^\vee,
\]
where $\beta_v^\BDP$ can be composed with evaluation at $\omega_f\wedge\omega_A^{r-1}\eta_A^{r-1}\in\Fil^{2r-1}H^{4r-3}_{dR}((X_{2r-2})_{K_{1,v}})$ introduced in~\cite[Section 2.2]{BDP13}.

\subsection{Abel--Jacobi images of Heegner cycles}\label{cyclecomp}
In this subsection we focus on the applications of the results in the previous sections to our self-dual $G_K$-representation $V_f$. Recall that we are working over the canonical Galois stable lattice $T_f$ of $V_f$. Recall also that $K_1$ denotes the Hilbert class field of $K$.

For our convenience, we denote by $\AJ^f_{K_1,1}$ the map \[\delta_{0}:\CH^r(W_{2r-2}/K_1)_{0,\bQ}\otimes \cO\to \H^1(K_1,T_f).\] 
and we abbreviate the evaluation of $\beta_v=\PD\circ\log_{K_v,\H^{2r-1}((W_{2r-2}/K_1)_{\ol K},\bQ_p(r))}\circ \loc_v\circ \delta_0$ at a differential $w\in\Fil^{r}\H^{2r-1}_\dR((W_{2r-2})_{K_{1,v}})$ as $\log_w(\AJ^f_{K_1,1})$.

By abuse of notation, we also denote by $\AJ_{K_1}^f$ the map\[
\CH^r(W'_{2r-2}/K_1)_0\otimes \cO\to \H^1_f(K_1,T_f)\]

\begin{assumption}
	We assume that all $p$-adic Abel--Jacobi maps are injective.
\end{assumption}
This is a standard hypothesis in the literature. Sometimes we still call the Abel--Jacobi images of Heegner cycles `Heegner cycles'.

Notice that there is a Gross--Zagier type formula for Heegner cycles over $\Gamma(N)$ for modular forms obtained by Zhang (\cref{GZZ}). However, the $p$-adic version of BDP (\cref{BDP}) concerns the Heegner cycles over $\Gamma_1(N)$, while there is no known formula of Gross--Zagier type for $\Gamma_1(N)$. This unfortunate inconsistency is the main obstacle in obtaining a rank $1$ Tamagawa Number formula using current approaches.

Luckily, there are a few well-understood relation between the different Heegner cycles in terms of the $p$-adic Abel--Jacobi maps.

\begin{proposition}\label[proposition]{comp}
  $[\im(\AJ^f_{K_1,1}):\AJ^f_{K_1,1}(\Delta_\fra)]=[\im(\AJ^f_{K_1}):\AJ^f_{K_1}(\Delta_N)].$
\end{proposition}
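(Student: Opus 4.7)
The strategy is to exhibit a commutative diagram relating the two $p$-adic Abel--Jacobi maps via the natural finite map of Kuga--Sato varieties induced by the projection $X(N)\to X_1(N)$. This projection extends to a proper morphism $\pi:W'_{2r-2}\to W_{2r-2}$, and on the $f$-isotypic component of middle-degree étale cohomology, $\pi_*$ identifies the two incarnations of the canonical lattice $T_f$: one cut out by $\Pi_B\Pi_\eps$ on the sheaf $\cF$ over $X(N)$ as in~\cref{pAJ/N}, the other cut out by $\epsilon_W$ on $\H^{2r-1}((W_{2r-2})_{\ol K},\bQ_p(r))$ as used in the construction of $\beta_v$. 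Both realizations are canonically a rank-two $\cO$-lattice inside the same $V_f$ (namely the canonical lattice), and so must agree up to an $\cO$-unit; this identification is what I would use as the right vertical arrow below.

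Granting this, I would then verify that $\pi_*\Delta_N$ and $\Delta_\fra$ coincide up to a $p$-adic unit in the $f$-isotypic Chow group, using that both cycles are constructed from the graph of multiplication by $\sqrt{-D}$ on the same CM elliptic curve $A_\fra=E_x$, viewed in the appropriate fibers of the fiber products $\cE(N)^{2r-2}$ and $\cE_1(N)^{2r-2}$ respectively. By the functoriality of the étale Abel--Jacobi map under proper pushforward, the square
\[
\begin{tikzcd}
\CH^r(W'_{2r-2}/K_1)_0\otimes\cO \arrow[r,"\AJ^f_{K_1}"]\arrow[d,"\pi_*"] & \H^1(K_1,T_f)\arrow[d,"\simeq"]\\
\CH^r(W_{2r-2}/K_1)_0\otimes\cO \arrow[r,"\AJ^f_{K_1,1}"] & \H^1(K_1,T_f)
\end{tikzcd}
\]
commutes, with the right-hand isomorphism induced by the identification of Hecke eigenspaces described above. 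Consequently $\AJ^f_{K_1,1}(\Delta_\fra)$ corresponds to $\AJ^f_{K_1}(\Delta_N)$ and $\im(\AJ^f_{K_1,1})$ to $\im(\AJ^f_{K_1})$ under an $\cO$-linear isomorphism, from which the equality of indices follows at once.

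The main obstacle I anticipate is the second step: matching $\pi_*\Delta_N$ with $\Delta_\fra$ up to a $p$-adic unit at the integral level, given that the projectors $\Pi_B\Pi_\eps$ and $\epsilon_W$ are set up with genuinely different normalizations on the two Kuga--Sato varieties. This requires tracking the action of the deck transformation group of $X(N)\to X_1(N)$ on the CM fibers above $x$ and $P$ and checking how the two idempotents interact with the pushforward. The essential content of this computation has already been carried out by Thackeray~\cite{Thackeray2022} in the residually irreducible case; the argument should transfer verbatim here because neither the projectors nor the cycle construction are sensitive to the mod-$p$ reducibility of $\rho_f$.
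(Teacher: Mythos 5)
The paper offers no argument of its own here: it simply invokes \cite[Propositions 10.6 and 10.7]{Thackeray2022}, which is exactly where your last paragraph defers ``the essential content of this computation.'' So in substance you land where the paper lands. But if your sketch is meant to outline that computation, two steps as written do not go through.

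First, your commutative square only shows that the identification $\iota$ on the right carries $\AJ^f_{K_1}(\Delta_N)$ to $\AJ^f_{K_1,1}(\pi_*\Delta_N)$ and carries $\im(\AJ^f_{K_1})$ \emph{into} $\im(\AJ^f_{K_1,1})$; it does not show that these two rank-one $\cO$-modules coincide. The proposition compares indices inside two a priori different images, so a proper containment $\iota(\im(\AJ^f_{K_1}))\subsetneq\im(\AJ^f_{K_1,1})$ of index $d>1$ would skew the asserted equality by exactly $d$. The same issue hides in your claim that the two integral realizations of $T_f$ ``must agree up to an $\cO$-unit'': two lattices in $V_f$ agree up to homothety at best, and the specific map induced by $\pi_*$ could have finite-index image. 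The standard repair is to bring in the pullback $\pi^*$ and use $\pi_*\pi^*=\deg(\pi)$, observing that $\deg(\pi)$ divides a power of $[\Gamma_1(N):\Gamma(N)]$ and is therefore prime to $p$ since $p\nmid N$; this forces the two images (and the two lattices) to agree after tensoring with $\cO$. You need to say this, or something equivalent, explicitly. Second, the two cycles are not supported over ``the same CM elliptic curve'' for general $\fra$: $\Delta_N$ lies over a point of $X(N)$ whose underlying curve is $E_x\cong\bC/\cO_K$, whereas $\Delta_\fra$ lies over $P_\fra$ with underlying curve $A_\fra=\bC/\fra^{-1}$. Your matching of $\pi_*\Delta_N$ with $\Delta_\fra$ can therefore only be carried out directly for $\fra=\cO_K$; the general case needs the separate statement that $[\im(\AJ^f_{K_1,1}):\cO\cdot\AJ^f_{K_1,1}(\Delta_\fra)]$ is independent of $\fra$ (via the $\Pic(\cO_K)$-action on the points $P_\fra$ and Galois equivariance of the Abel--Jacobi map), which is precisely the content of the second of the two cited propositions and should not be absorbed silently into the first step.
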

\begin{proof}
	This is~\cite[Proposition 10.6, Proposition 10.7]{Thackeray2022}. In particular, one knows the index is independent of $\fra$.
\end{proof}

A consequence of this comparison is that the $\Delta_\fra$ is non-torsion if and only if $\Delta_N$ is non-torsion. 

Similarly, one can relate $\Delta_\fra$ to $\Delta_\fra^\BDP$. Define $J^{\bZ_p}_\BDP$ as in~\cite[Section 5.6]{Thackeray2022}. Then one can define $\AJ^f_{K_1,\BDP}:\CH^{2r-2}(X_{2r-2}/K_1)_0\otimes_\bZ\bZ_p\to \H^1_f(K_1,J_\BDP^{\bZ_p})$ which we also assume to be injective and one has the following relation.
\begin{proposition}\label[proposition]{compBPD}
	$[\im(\AJ^f_{K_1,1}):\AJ^f_{K_1,1}(\Delta_\fra)]=[\im(\AJ^f_{K_1,\BDP}):\AJ^f_{K_1,\BDP}(\Delta_\fra^\BDP)].$
\end{proposition}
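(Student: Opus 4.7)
The plan is to adapt the strategy behind~\cref{comp} (which in~\cite{Thackeray2022} uses Prop.~10.6--10.7) to the pair $(\Delta_\fra, \Delta_\fra^\BDP)$. The essential observation is that $X_{2r-2}=W_{2r-2}\times A^{2r-2}$ and that $\Delta_\fra^\BDP$ is, up to projectors, the exterior product of $\Delta_\fra$ with a distinguished algebraic cycle on $A^{2r-2}$ built from the isogeny $\phi_\fra$ (see~\cite[Section 2.3]{BDP13}). After applying the projector $\epsilon_X=\epsilon_W\cdot\epsilon_A$, the $f$-isotypic piece of the relevant cohomology of $X_{2r-2}$ decomposes via K\"unneth, and the factor coming from $A^{2r-2}$ contributes a free $\cO$-module of rank one spanned by a canonical Hodge class (arising from CM theory, since $A=\bC/\cO_K$).

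First, I would check at the level of Chow groups that there is a natural map
\[
\iota_\fra:\CH^r(W_{2r-2}/K_1)_{0,\bQ}\to\CH^{2r-1}(X_{2r-2}/K_1)_{0,\bQ}
\]
(depending on $\fra$ only up to automorphisms of $A^{2r-2}$) sending $\Delta_\fra$ to $\Delta_\fra^\BDP$. Second, identifying the $f$-isotypic, $\epsilon_A$-invariant part of $\H^{2r-2}(A^{2r-2}_{\ol K},\cO(r-1))$ with a canonical rank-one free $\cO[G_{K_1}]$-module, I would produce a $G_{K_1}$-equivariant isomorphism $J_\BDP^{\bZ_p}\isom T_f$ fitting into a commutative diagram
\[
\begin{tikzcd}
\CH^r(W_{2r-2}/K_1)_0\otimes\cO \arrow[r,"\AJ^f_{K_1,1}"] \arrow[d,"\iota_\fra"] & \H^1_f(K_1,T_f) \arrow[d,"\wr"] \\
\CH^{2r-1}(X_{2r-2}/K_1)_0\otimes\cO \arrow[r,"\AJ^f_{K_1,\BDP}"] & \H^1_f(K_1,J_\BDP^{\bZ_p}).
\end{tikzcd}
\]
Once this diagram is established, the equality of indices is immediate, and the common value is manifestly independent of $\fra$, matching the analogous conclusion in~\cref{comp}.

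The main obstacle I expect is integrality, namely showing that the right vertical map is an isomorphism of $\cO$-modules and not merely of $F$-vector spaces. Concretely, one has to verify that the canonical Hodge class on $A^{2r-2}$ used in the K\"unneth splitting generates the relevant $\epsilon_A$-invariant integral cohomology as a free $\cO$-module. Since $A=\bC/\cO_K$ has CM by $\cO_K$ with good reduction at $v\mid p$ and $p=v\ol v$ splits in $K$, this should follow from standard CM theory, but keeping careful track of the projectors $\epsilon_A$ and the Tate twist $(r-1)$ is the delicate point; this parallels the integrality verifications in~\cite[Section 5.6]{Thackeray2022}. With integrality in hand, the rest of the argument is formal.
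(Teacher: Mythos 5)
The paper does not actually prove this statement: its proof is a one-line citation to~\cite[Section 10.5]{Thackeray2022}, so your sketch is necessarily a different route in that it attempts to reconstruct the cited argument. The overall strategy you describe --- K\"unneth decomposition of the cohomology of $X_{2r-2}=W_{2r-2}\times A^{2r-2}$, identification of the $\epsilon_A$-projected factor coming from $A^{2r-2}$ with a rank-one free $\cO$-module via CM theory, and reduction of the index comparison to the integrality of the resulting isomorphism $J^{\bZ_p}_\BDP\isom T_f$ (up to twist) --- is indeed the standard way to compare $\Delta_\fra$ with $\Delta_\fra^\BDP$, and your identification of integrality as the crux is exactly right; that is what~\cite[Sections 5.6 and 10.5]{Thackeray2022} supplies.

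There is, however, a genuine inaccuracy in your first step. The generalized cycle $\Delta_\fra^\BDP$ is \emph{not} (even up to projectors) the exterior product of $\Delta_\fra$ with a cycle on $A^{2r-2}$: it is $\epsilon_X$ applied to a power of the graph of the isogeny $\phi_\fra\colon A\to A_\fra$, and a graph is not a product cycle. Consequently the Chow-level map $\iota_\fra$ you posit, sending $\Delta_\fra$ to $\Delta_\fra^\BDP$, is not obviously available, and the left vertical arrow of your diagram cannot be taken for granted. The comparison has to be carried out after passing to \'etale realizations: one computes the K\"unneth components of the class of $\Gamma_{\phi_\fra}^{2r-2}$ and checks that its projection to the component $\H^{2r-1}(W_{2r-2})\otimes(\text{rank-one piece of }\Sym^{2r-2}\H^1(A))$ agrees, up to the canonical generator of the rank-one piece and an explicit constant, with the class of $\Delta_\fra$. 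This is the content of~\cite[Sections 2.3--2.4]{BDP13} rationally and of Thackeray's work integrally. With the diagram reformulated at the level of cohomology classes rather than Chow groups, and with the integrality of the K\"unneth splitting in hand, your argument goes through; as stated, the first step would fail.
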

\begin{proof}
	This is in~\cite[Section 10.5]{Thackeray2022}. Again, this index is independent of $\fra$.
\end{proof}

Consequently, $\Delta_\fra$ is non-torsion if and only if $\Delta_\fra^\BDP$ is non-torsion.

These comparisons allow us to take the advantage of the Gross--Zagier--Zhang formula for $\Delta_N$ to relate the behavior of $L$-functions to that of $\Delta_\fra^\BDP$ (or rather, $\kappa_\infty$. But see the end of~\cref{2.2}) in the Heegner point Main Conjecture (see~\cref{IMC}(IMC1)). More precisely, if one assumes $\kappa_1$ is $\Lambda_K$-nontorsion, then its projection to $\H^1_\BK(K,T_f)$, which is $\sum_{[\fra]\in\Pic(\cO_K)}\AJ^f_{K_1,\BDP}(\Delta_\fra^\BDP)$, will be $\bZ_p$-nontorsion. This implies $\Delta_\fra^\BDP$ is non-torsion an hence $\Delta_N$ is nontorsion. Finally, desptie the difference between Masoero's cycle and Zhang's cycle, it is implicit in~\cite{Mas} that\[
\AJ^f_{K_1}(\Delta_N)=\AJ^f_{K_1}(\tilde{\Gamma}).
\]In particular, if $\Delta_N$ is non-torsion, so is Zhang's cycle by~\cref{zhangcyc}. This will be the key in the proof of~\cref{pconv}.

\section{Control theorems}\label{section:control}
\subsection{A cyclotomic control theorem}
In this subsection we recall a cyclotomic control theorem for modular forms. Again let $f$ be a newform of weight $2r$. $\Lambda_\bQ\defeq\cO\llbracket \Gamma_\bQ\rrbracket$ will denote the cyclotomic Iwasawa algebra over $\bQ$, where $\Gamma_\bQ\defeq\Gal(\bQ_\infty/\bQ)$. Recall that $\fp\mid p$ is a chosen place of $\bQ(f)$. If one further assumes $a_p(f)\notequiv 1\pmod{\fp}$, the control theorem is the main result of~\cite{LV21} for $F=\bQ$. This additional hypothesis will be satisfied for our application. Indeed, by the description of the residual representation attached to a modular form (see e.g.~\cite[Theorem 34]{Kri16}), the quotient representation is given by a power of mod-$p$ cyclotomic character coming from self-dual twist multiplied by an unramified character taking $\Frob_p$ to $\alpha_p$, the unit root of the Hecke polynomial $x^2+a_p(f)+p^{2r-1}$. Now if we assume $\phi|_{G_p},\psi|_{G_p}\ne \mathbf{1},\omega$, then $\alpha_p\notequiv 1\pmod{p}$. But $a_p(f)=\alpha_p+p^{2r-1}/\alpha_1$, so $a_p(f)\notequiv 1\pmod{p}$ as well. 

Recall that $V_f=\rho_f(1-r)$ is self-dual.

\begin{definition}
	Let $L$ be an number field and let $v$ be any place of $L$. The \textit{unramified local condition} is defined as\[
	\H^1_\ur(L_v,-)=\ker\bigl(\H^1(L_v,-)\to\H^1(I_v,-)\bigr)\] where $I_v\subset G_{L_v}$ is the inertia subgroup at $v$.\\
	Let $\bB_\cris$ be Fontaine's crystalline ring of periods. If $v\mid p$, the \textit{Bloch--Kato local conditions} on $V_f$ and $A_f$ are respectively defined as\[
	\H^1_f(L_v,V_f)\defeq\ker\bigl(\H^1_f(L_v,V_f)\to\H^1(L_v,V_f\otimes_{\bQ_p}\bB_\cris)\bigr)\]
	and
	\[\H^1_f(L_v,A_f)\defeq\im\bigl(\H^1_f(L_v,V_f)\to\H^1(L_v,A_f)\bigr)\]
	where the last arrow is induced by the canonical map $\H^1(L_v,V_f)\to\H^1(L_v,A_f)$.\\
	If $v\nmid p$, the \textit{Bloch--Kato local conditions} on $V_f$ and $A_f$ are respectively defined as\[
	\H^1_f(L_v,V_f)\defeq\H^1_\ur(L_v,V_f)\]
	and
	\[\H^1_f(L_v,A_f)\defeq\im\bigl(\H^1_f(L_v,V_f)\to\H^1(L_v,A_f)\bigr)\].
	
	The \textit{Bloch--Kato Selmer group} of $A_f$ over $L$ is defined as\[
	\H^1_\BK(L,A_f)\defeq\ker\bigl(\H^1(L,A_f)\to\prod_v\frac{\H^1(L_v,A_f)}{\H^1_f(L_v,A_f)}\bigr),\]
	where $v$ runs over all places of $L$.
\end{definition}

To define Greenberg's Selmer groups, we need a new type of local conditions. Again let $-$ be $V_f$ or $A_f$. We first recall a short exact sequence\[
0\to\Fil^+(V_f)\to V_f \to \Fil^-(V_f) \to 0
\]
such that $\Fil^+(V_f)$ is one dimensional, which is characterized by the fact that $\Fil^-(V_f)$ is an unramified character times the $(1-r)$-th power of the cyclotomic character coming from the self-dual twist.
Define $\Fil^+(T_f)=T_f\cap\Fil^+(V_f)$ and let $\Fil^+(A_f)\defeq \Fil^+(V_f)/\Fil^+(T_f)$, $\Fil^-(A_f)\defeq A_f/\Fil^+(A_f)$. We mention that when $f$ is weight $2$ with associated elliptic curve $E$ of good ordinary reduction at $p$, $\Fil^+(T_pE)$ is just the kernel of the reduction map $T_pE\to T_p{\tilde{E}}$ where $\tilde{E}$ is the reduction of $E$ at $p$, and $\Fil^+(V_pE)=\Fil^+(T_pE)\otimes \bQ_p$.

Let $M_f\defeq T_f\otimes \Lambda_\bQ^\vee$ and let $-$be $V_f,\ A_f$ or $M_f$.

\begin{definition}
	The \textit{ordinary local condition} is defined as
\[\H^1_\ord(L_v,-)=\ker\bigl(\H^1(L_v,-)\to\H^1(I_v,\Fil^-(-))\bigr)\]
The \textit{Greenberg's Selmer group} is defined as\[
	\H^1_\Gr(L,M_f)\defeq \ker\bigl(\H^1(L,M_f)\to \prod_{v\mid p}\frac{\H^1(L_v,M_f)}{\H^1_\ord(L_v,M_f)}\times\prod_{v\nmid p}\frac{\H^1(L_v,M_f)}{\H^1_\ur(L_v,M_f)}\bigr)\] 
	where $v$ runs through all primes of $L$.
\end{definition}
\begin{remark}
From Shapiro's lemma, we have $\H^1(L,M_f)=\H^1(L_{\infty},A_f)$ where $L_\infty$ is the cyclotomic $\bZ_p$ extension of $L$. The same is true for the local cohomology groups and Selmer groups.
\end{remark}

By~\cite[sectoin 3.3.3]{LV21}, when $v\mid p$, one has $\H^1_f(L_v,A_f)\subset\H^1_\ord(L_v,A_f)$. When $v\nmid p$, one can show that $\H^1_f(L_v,A_f)\subset\H^1_\ur(L_v,A_f)$ and from~\cite[Lemma 3.1]{LV21}, the index $[\H^1_\ur(L_v,A_f):\H^1_f(L_v,A_f)]$ is finite. 

\begin{definition}
	Let $v$ be a place of a number field $L$ not above $p$. The \textit{$p$-part of the Tamagawa number} of $A_f$ at $v$ is the integer\[
	c_v(A_f/L):=[\H^1_\ur(L_v,A_f):\H^1_f(L_v,A_f)]\]
\end{definition}
The rest of the section is devoted to explaining the following cyclotomic control theorem. As in~\cite[section 2.2-2.3]{LV21}, let \begin{align*}
\Sigma\defeq \{\text{primes of $L$ at which $V$ is ramified}\}&\cup\{\text{primes of $L$ above $p$}\}\\&\cup\{\text{archimedean primes of $L$}\},\end{align*} which is a finite set. For the following theorem, take $L=\bQ$. Let $\bQ^\Sigma$ be the maximal extension of $\bQ$ unramified outside $\Sigma$. Then by Lemma 5.2 in \textit{op. cit.}, the Selmer groups can be redefined as\[
\H^1_\BK(\bQ,A_f)=\ker\bigl(\H^1(\bQ^\Sigma/\bQ,A_f)\to\prod_{v\in\Sigma}\frac{\H^1(\bQ_v,A_f)}{\H^1_f(\bQ_v,A_f)}\bigr)\]
and
\[\H^1_\Gr(\bQ,M_f)=\ker\bigl(\H^1(\bQ^\Sigma/\bQ,M_f)\to \frac{\H^1(\bQ_p,M_f)}{\H^1_\ord(\bQ_p,M_f)}\times\prod_{\ell\in\Sigma-\{p\}}\frac{\H^1(\bQ_\ell,M_f)}{\H^1_\ur(\bQ_\ell,M_f)}\bigr).\]

Now we can state the cyclotomic control theorem we will need.

\begin{theorem}\label{cyccontrol}
	Suppose that $\H^1_\BK(\bQ,A_f)$ is finite. Suppose the assumption at the beginning of this section are satisfied. Then\begin{enumerate}
		\item $\H^1_\Gr(\bQ,M_f)$ is $\Lambda_\bQ$-cotorsion;
		\item If $\cF$ is the characteristic power series of the Pontryagin dual of $\H^1_\Gr(\bQ,M_f)$, then $\cF(0)\ne 0$;
		\item There is an equality\[
		\#(\cO/\cF(0))=\#\H^1_\BK(\bQ,A_f)\cdot\prod_{v\in\Sigma,v\ne p}c_v(A_f/\bQ)\]
	\end{enumerate}
\end{theorem}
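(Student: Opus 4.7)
The plan is to adapt the Mazur--Greenberg control theorem strategy from~\cite{LV21} to our Eisenstein-prime setting. By Shapiro's lemma, $\H^1(\bQ, M_f) \cong \H^1(\bQ_\infty, A_f)$, identifying $\H^1_\Gr(\bQ, M_f)$ with the Iwasawa-theoretic ordinary Selmer group of $A_f$ over $\bQ_\infty$. The central object is the commutative diagram with exact rows
\[
\begin{tikzcd}[column sep=small]
0 \arrow[r] & \H^1_\BK(\bQ, A_f) \arrow[r] \arrow[d, "s"] & \H^1(\bQ^\Sigma/\bQ, A_f) \arrow[r] \arrow[d, "h"] & \displaystyle\bigoplus_{v \in \Sigma}\frac{\H^1(\bQ_v, A_f)}{\H^1_f(\bQ_v, A_f)} \arrow[d, "g"] \\
0 \arrow[r] & \H^1_\Gr(\bQ, M_f)^{\Gamma_\bQ} \arrow[r] & \H^1(\bQ^\Sigma/\bQ, M_f)^{\Gamma_\bQ} \arrow[r] & \displaystyle\bigoplus_{v \in \Sigma}\left(\frac{\H^1(\bQ_v, M_f)}{\H^1_{?}(\bQ_v, M_f)}\right)^{\Gamma_\bQ}
\end{tikzcd}
\]
where $? = \ord$ for $v \mid p$ and $? = \ur$ otherwise; the snake lemma will then extract the desired comparison.

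The first step is to show the middle map $h$ is an isomorphism. By inflation--restriction, $\ker(h)$ and $\coker(h)$ are controlled by $\H^i(\Gamma_\bQ, \H^0(\bQ_\infty, A_f))$ for $i = 1, 2$. The hypothesis $\phi|_{G_p}, \psi|_{G_p} \ne \mathbf{1}, \omega$ (which was used to guarantee $a_p(f) \not\equiv 1 \pmod{\fp}$) also implies $\H^0(\bQ_\infty, A_f[\fp]) = 0$: the semisimple residual representation $\F(\phi) \oplus \F(\psi)$ has no $G_{\bQ_\infty}$-fixed vectors, since $\bQ_\infty/\bQ$ is a pro-$p$ extension ramified only at $p$ and neither $\phi$ nor $\psi$ restricts trivially to $G_{\bQ_\infty}$. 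Hence $\H^0(\bQ_\infty, A_f) = 0$, so $h$ is an isomorphism.

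Next I would analyze the local maps $g_v$ one prime at a time. At $v \ne p$ in $\Sigma$, a standard calculation (cf.~\cite[Lemma 3.1]{LV21}) shows $g_v$ is surjective with kernel of order exactly $c_v(A_f/\bQ)$, reflecting the index $[\H^1_\ur(\bQ_v, A_f) : \H^1_f(\bQ_v, A_f)]$. At $v = p$, the inflation--restriction computation --- using that $\Fil^-(V_f)$ is $\chi_\cyc^{1-r}$ twisted by an unramified character with $1-r \ne 0$, so the relevant $\H^0$ on the unramified quotient vanishes --- shows $g_p$ has trivial kernel and cokernel; this is precisely why the ordinary local condition is set up to match Bloch--Kato after passage to $\Gamma_\bQ$-invariants. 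The snake lemma then yields
\[
0 \to \H^1_\BK(\bQ, A_f) \xrightarrow{s} \H^1_\Gr(\bQ, M_f)^{\Gamma_\bQ} \to \bigoplus_{v \in \Sigma,\, v \ne p} \ker(g_v) \to 0,
\]
so that $\#\H^1_\Gr(\bQ, M_f)^{\Gamma_\bQ} = \#\H^1_\BK(\bQ, A_f) \cdot \prod_{v \in \Sigma,\, v \ne p} c_v(A_f/\bQ)$.

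Finally, since $\H^1_\BK(\bQ, A_f)$ is finite by assumption, $\H^1_\Gr(\bQ, M_f)^{\Gamma_\bQ}$ is finite. Dualizing, $X := \H^1_\Gr(\bQ, M_f)^\vee$ satisfies $X_{\Gamma_\bQ}$ finite, which simultaneously proves (i) ($\Lambda_\bQ$-cotorsion) and (ii) ($\cF(0) \ne 0$). For (iii), I would show that $X$ carries no nonzero finite $\Lambda_\bQ$-submodule and that $\H^1_\Gr(\bQ, M_f)_{\Gamma_\bQ} = 0$ (the latter via a parallel control argument with corestriction); under these vanishings the Euler characteristic formula for $\Lambda_\bQ$-torsion modules gives
\[
\#(\cO/\cF(0)) = \frac{\#X_{\Gamma_\bQ}}{\#X^{\Gamma_\bQ}} = \#\H^1_\Gr(\bQ, M_f)^{\Gamma_\bQ},
\]
which combined with the snake lemma count above yields the stated formula. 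The main obstacle I expect lies in verifying the absence of finite $\Lambda_\bQ$-submodules of $X$ in the residually \emph{reducible} setting: the usual arguments rely on irreducibility of $\ol\rho_f$, so one must instead leverage the non-triviality and ramification properties of $\phi, \psi$ on $G_{\bQ_\infty}$ together with the structure of the filtrations $\Fil^\pm(A_f)$ to exclude such submodules.
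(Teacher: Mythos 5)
Your proposal is correct in outline, but it is worth noting that the paper does not actually argue this theorem at all: its proof is the single line ``This is the main result of~\cite{LV21}'', with the Eisenstein-specific input being only that the standing hypothesis $a_p(f)\not\equiv 1\pmod{\fp}$ (forced by $\phi|_{G_p},\psi|_{G_p}\ne\mathbf{1},\omega$) puts $f$ in the range where that result applies. What you have written is essentially a reconstruction of the Greenberg-style control argument that underlies the cited result, so the mathematics is the same even though the exposition is not. Two small points. First, at $v=p$ your vanishing criterion ``$1-r\ne 0$'' is not the right condition: the character on $\Fil^-(A_f)$ reduces to $\lambda(\alpha)\omega^{1-r}$, so the relevant $\H^0$ over $\bQ_{\infty,p}$ vanishes when $r\not\equiv 1\pmod{p-1}$ \emph{or} $\alpha\not\equiv 1$; in weight $2$ (where $r=1$) it is precisely $a_p(f)\not\equiv 1\pmod{\fp}$ that saves you, and that hypothesis covers all cases uniformly, so you should invoke it rather than the exponent. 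Second, the obstacle you flag at the end --- ruling out finite $\Lambda_\bQ$-submodules of $X$ in the residually reducible setting --- is not really an open issue: the standard Greenberg argument for the absence of proper finite-index submodules of the Selmer group does not use irreducibility of $\ol\rho_f$, only the vanishing $\H^0(\bQ_\infty,A_f)=0$ (which you already established to show $h$ is an isomorphism) together with surjectivity of the $\Lambda$-adic global-to-local map; the paper makes exactly this observation in \cref{p-rSel} when it derives almost-divisibility from $\H^0(\bQ,\ol\rho_f\otimes\omega^{p-r})=0$. With those two adjustments your sketch closes up into a complete proof of the statement the paper outsources to~\cite{LV21}.
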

\begin{proof}
This is the main result of~\cite{LV21}. 
\end{proof}

\subsection{An anticyclotomic control theorem of Greenberg type}
In this section we introduce an anticyclotomic theorem similar to that in~\cite{Greenberg1999}, which will be used in the proof of the higher weight $p$-converse theorem. The notations are from~\cref{iwasawa}. We do not make the assumption from the last subsection that $a_p(f)\notequiv 1\pmod{\fp}$.

\begin{theorem}\label{anticon}
	Assume that $p\nmid 2N$. Then the map\[
	\H^1_\BK(K,A_f)\to\H^1_{\cF_{\Lambda_K}}(K,M_f)^{\Gamma_K}\]has finite kernel and cokernel.
\end{theorem}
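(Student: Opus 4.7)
The plan is a standard snake-lemma control-theorem argument. Take a finite set $\Sigma$ of places of $K$ containing all primes above $pN\infty$ together with those where $V_f$ is ramified. Consider the commutative diagram with exact rows
\[
\begin{tikzcd}[column sep=small]
0 \arrow[r] & \H^1_\BK(K,A_f) \arrow[r] \arrow[d,"s"] & \H^1(K^\Sigma/K, A_f) \arrow[r] \arrow[d,"h"] & \displaystyle\prod_{v\in\Sigma} \frac{\H^1(K_v,A_f)}{\H^1_f(K_v,A_f)} \arrow[d,"g"] \\
0 \arrow[r] & \H^1_{\cF_{\Lambda_K}}(K,M_f)^{\Gamma_K} \arrow[r] & \H^1(K^\Sigma/K, M_f)^{\Gamma_K} \arrow[r] & \displaystyle\prod_{v\in\Sigma} \Bigl(\frac{\H^1(K_v,M_f)}{\H^1_{\cF_{\Lambda_K}}(K_v,M_f)}\Bigr)^{\Gamma_K}
\end{tikzcd}
\]
whose vertical maps are induced by the canonical inclusion $A_f \inj M_f^{\Gamma_K}$ coming from $M_f = T_f \otimes_\cO \Lambda_K^\vee$. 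By the snake lemma it suffices to show that $\ker(h)$, $\coker(h)$, and each local $\ker(g_v)$ are finite.

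For the global map $h$, Shapiro's lemma identifies $\H^*(K^\Sigma/K, M_f) \iso \H^*(K_\infty^\Sigma/K_\infty, A_f)$, turning $h$ into the restriction map along $G_{K_\infty} \subseteq G_K$. Since $\Gamma_K \iso \bZ_p$ has $p$-cohomological dimension one, Hochschild--Serre gives
\[
0 \To \H^1\bigl(\Gamma_K, A_f^{G_{K_\infty}}\bigr) \To \H^1(K^\Sigma/K, A_f) \xrightarrow{h} \H^1(K_\infty^\Sigma/K_\infty, A_f)^{\Gamma_K} \To 0,
\]
so $\coker(h)=0$ and $\ker(h) = \H^1(\Gamma_K, A_f^{G_{K_\infty}})$. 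Under the Eisenstein hypotheses the two characters appearing in $\ol\rho_f^{\mathrm{ss}}|_{G_K}$ are non-trivial on every open subgroup of $G_{K_\infty}$ (the anticyclotomic extension kills neither $\phi|_{G_K}$ nor $\psi|_{G_K}$ generically), so $A_f^{G_{K_\infty}}$ has finite $\cO$-length, and thus $\ker(h)$ is finite.

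For each local map $g_v$, apply the same inflation--restriction argument at a chosen prime $w \mid v$ of $K_\infty$, combined with the standard comparison of local conditions: at archimedean $v$ both quotients vanish since $p$ is odd; at finite $v \nmid p$ the Bloch--Kato condition differs from the unramified one by the finite $p$-part $c_v(A_f/K)$ of the Tamagawa number, and the $\Lambda$-adic unramified condition pulls back compatibly; at the split primes $v, \bar v \mid p$ ordinarity of $V_f$ identifies $\H^1_f(K_v, A_f)$ with $\H^1_\ord(K_v, A_f)$ up to a finite index governed by terms of the form $\H^i(\Gamma_{K,w}, (\Fil^\pm A_f)^{G_{K_{\infty,w}}})$. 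The main obstacle, and the reason this argument yields only qualitative finiteness rather than an exact index formula, is the local analysis at $v, \bar v \mid p$: the Selmer structure $\cF_{\Lambda_K}$ from~\cite{CGLS} is asymmetric at $\{v,\bar v\}$ (strict at one, relaxed at the other) and does not literally coincide with Bloch--Kato, so the comparison requires absorbing several finite discrete $\H^0$-contributions from the ordinary filtration pieces. A sharper closed-form statement along the lines of~\cref{acon} would require tracking each such correction term precisely; for the purposes of the present theorem, qualitative finiteness is all that is needed.
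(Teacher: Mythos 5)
Your argument is the standard snake-lemma control-theorem proof, which is exactly the route the paper takes (it simply cites~\cite[Theorem 2.4.1]{KY24b}, which runs this same argument), so the overall strategy is correct. Two points deserve correction, though. First, your description of $\cF_{\Lambda_K}$ as ``asymmetric at $\{v,\bar v\}$ (strict at one, relaxed at the other)'' confuses it with the structure $\cF_{\ac}$ appearing in~\cref{control}: the structure $\cF_{\Lambda_K}$ from~\cite[Section 3]{CGLS} imposes the Greenberg/ordinary condition at \emph{both} primes above $p$. This is not cosmetic — if the local conditions really were relaxed at $v$ and strict at $\bar v$, the $\Gamma_K$-invariants would compare to a BDP-type Selmer group of different corank, and the kernel and cokernel against $\H^1_\BK(K,A_f)$ would in general \emph{not} be finite; with the ordinary condition the comparison at $p$ is the standard one, using $\H^1_f(K_v,A_f)\subset\H^1_\ord(K_v,A_f)$ with finite index and the finiteness of $\H^i\bigl(\Gamma_{K,w},(\Fil^-A_f)^{G_{K_{\infty,w}}}\bigr)$. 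Second, your justification that $A_f^{G_{K_\infty}}$ is finite via nontriviality of $\phi|_{G_K}$ and $\psi|_{G_K}$ is not available here: the paper explicitly allows one of these residual characters to be trivial on $G_K$ (see~\cref{algmain}). The correct argument is that $V_f^{G_{K_\infty}}=0$ (e.g.\ because $V_f|_{G_K}$ has no trivial subrepresentation, by irreducibility or by Hodge--Tate weight considerations), which forces $A_f^{G_{K_\infty}}$ to be finite and hence $\ker(h)=\H^1(\Gamma_K,A_f^{G_{K_\infty}})$ finite. With these repairs your proof is sound and coincides with the cited one.
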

\begin{proof}
 The proof is similar to that of~\cite[Theorem 2.4.1]{KY24b}.
\end{proof}
\subsection{An anticyclotomic control theorem of Jetchev--Skinner--Wan type}
	In this section, we consider another control theorem for the anticyclotomic Selmer groups introduced in~\cite{JSW2017}. As is in the case of~\cite{CGLS} (or rather~\cite{KY24}), the anticyclotomic Selmer groups generate the same $\Lambda_K$-characteristic ideals as the unramified Selmer groups. This control theorem is thus good for a rank $1$ Tamagawa Number formula. We remark that we do allow non-trivial global torsion in this section for future use and we do not make the assumption that $a_p(f)\notequiv 1\pmod{\fp}$.
	
	Recall that $K$ is an imaginary quadratic filed satisfying~\cref{assum}. Let $X^\Sigma_\ac(M_f)$ be the Pontryagin dual of the anticyclotomic Selmer group $\H^1_{\cF^\Sigma_\ac}(K,M_f)$ defined in~\cite{JSW2017}.

\begin{theorem}\label{control}
	Assume that
	\begin{enumerate}
		\item The $\cO_L$-module im $\AJ_K$ has rank $1$
		\item $\#\Sha_\Nek(f/K)<\infty$
		\item Localization: For each place $v\mid p$ of $K$, the localization map $\H^1_\BK(K,A_f)\to\H^1_f(K_v,A_f)$ restricts to a map\[
		(\im \AJ_K)\otimes_{\cO_L}(L/\cO_L) \to (\im AJ_{K_v})\otimes_{\cO_L} (L/\cO_L)
		\]
		of which the kernel is torsion.
		\item Local corank $1$: For each place $v\mid p$ of $K$, the $\cO_L$-module $\H^1_f(K_v,A_f)$ has corank $1$.
	\end{enumerate}
	Let $f^\Sigma_{ac}$ be a generator of the characteristic ideal $\Char_\Lambda(X^\Sigma_\ac(M_f))$ of the torsion $\Lambda$-module $X^\Sigma_\ac(M_f)$, then\begin{equation}\label{Control}
		\#\cO/f^\Sigma_{ac}(0)=\frac{\#\Sha_\BK(f/K)\cdot C^\Sigma(A_f)}{(\#\H^0(K,A_f))^2}(\#\delta_v)^2,
	\end{equation}
	
	where \[C^\Sigma(A_f)=\#\H^0(K_v,A_f)\cdot \#\H^0(K_{\ol v},A_f)\cdot\prod_{w\in S_p\setminus\Sigma, w\ split}\# \H^1_{\nr}(K_w,A_f)\cdot \prod_{w\in \Sigma}\#\H^1(K_w,A_f),\] and \[\delta_v=\frac{(\cO_L:\cO_L\cdot \log_\omega(\loc_{v}C))}{(\cO_L:\log_\omega(\H^1_f(K_v,T_f)_{/\tors}))(H^1_f(K,T_f)_{/\tors}:\cO_L \cdot C)}\]
	where $C$ is any cycle whose image under the localization map has finite index in $\H^1_f(K_v,T_f)$, and $\omega$ is any differential such that $\log_\omega$ restricts to an isomorphism $\log_{\omega}:\H^1_f(K_v,T_f)_{/\tors} \iso \cO_L$ (as a ring).

\end{theorem}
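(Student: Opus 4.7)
The plan is to adapt the descent argument of Jetchev--Skinner--Wan~\cite{JSW2017}, as extended to higher weight newforms in~\cite{Thackeray2022}, to the residually reducible setting with possibly non-trivial global torsion.

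First, I would translate $\#\cO/f^\Sigma_{ac}(0)$ into Selmer-theoretic data. Since $X^\Sigma_{ac}(M_f)$ is torsion over $\Lambda_K$, the standard structure-theorem computation gives
\[
\#\cO/f^\Sigma_{ac}(0) \;=\; \frac{\#\bigl(X^\Sigma_{ac}(M_f)_{\Gamma_K}\bigr)}{\#\bigl(X^\Sigma_{ac}(M_f)^{\Gamma_K}\bigr)},
\]
provided both orders are finite. The finiteness follows from hypotheses (i), (ii), and~\cref{anticon} after unwinding the anticyclotomic local conditions. Pontryagin dualizing identifies the two sides with the orders of $\H^1_{\cF^\Sigma_{ac}}(K,M_f)^{\Gamma_K}$ and $(\H^1_{\cF^\Sigma_{ac}}(K,M_f))_{\Gamma_K}$, which I then compare to Selmer groups over $K$ of $A_f$.

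Next, I would apply the snake lemma to the descent diagram obtained from $0 \to A_f \to M_f \xrightarrow{\gamma-1} M_f \to 0$. Globally, the kernel and cokernel of $\H^1(K,A_f) \to \H^1(K,M_f)^{\Gamma_K}$ each contribute $\#\H^0(K,A_f)$, accounting for the denominator $(\#\H^0(K,A_f))^2$. Locally, the anticyclotomic condition is strict at $v$ and relaxed at $\bar v$: comparing with the Bloch--Kato conditions contributes $\#\H^0(K_v,A_f)\cdot\#\H^0(K_{\bar v},A_f)$; at $w \in \Sigma$, the unrestricted local condition yields $\#\H^1(K_w,A_f)$; at the remaining split primes in $S_p \setminus \Sigma$, the gap between the naive local condition and $\H^1_\nr$ contributes $\#\H^1_\nr(K_w,A_f)$. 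Together these assemble into $C^\Sigma(A_f)$. To tie the resulting Selmer group of $A_f$ back to $\Sha_\BK(f/K)$, I use~\eqref{AJdescent} together with hypothesis (ii) to identify a subgroup of finite index of $\H^1_\BK(K,A_f)$ with $\im(\AJ^f_K) \otimes \bQ_p/\bZ_p$; hypotheses (i) and (iv) fix the $\cO$-corank at one on both the global and local sides. The anticyclotomic asymmetry then forces the descent to measure a global cycle $C$ against its localization at $v$, and a Poitou--Tate computation using $\log_\omega$ produces precisely the factor $(\#\delta_v)^2$, the square arising from contributions to both invariants and coinvariants.

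The main obstacle will be adapting this descent to the residually reducible setting. In~\cite{Thackeray2022} the irreducibility of $\ol\rho_f$ forces $\H^0(K,A_f) = 0$, which eliminates several subtleties; here the cohomology may be non-trivial, so the $\H^0$ factors must be tracked carefully and Ribet's lemma (as in the proof of~\cref{algmain}) is needed to control the interaction with the characters $\phi,\psi$ appearing in the semisimplification. A secondary point of care is to verify that a global cycle $C$ whose localization at $v$ has finite index in $\H^1_f(K_v, T_f)$ exists; hypotheses (iii) and (iv) provide this once we work with the canonical lattice so that $\log_\omega$ restricts to an isomorphism $\H^1_f(K_v, T_f)_{/\tors} \simeq \cO_L$, and it must be checked that the resulting $\delta_v$ is independent of the specific choice of $C$ satisfying the hypotheses.
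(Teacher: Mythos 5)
Your proposal follows essentially the same route as the paper: the paper's proof is a direct citation of the descent computation in \cite[Section 3]{JSW2017}, its verification for higher weight forms in \cite[Sections 8.1--8.2]{Thackeray2022}, and the residually reducible adaptation in \cite[Appendix B]{KY24}, with the single modification of replacing $\H^1_f(K,T_f)$ by $\H^1_f(K,T_f)_{/\tors}$ in the definition of $\delta_v$ — precisely the descent you outline (structure theorem for $\#\cO/f^\Sigma_{ac}(0)$, snake lemma on $0\to A_f\to M_f\xrightarrow{\gamma-1}M_f\to 0$, local-condition bookkeeping for $C^\Sigma(A_f)$, and the $\log_\omega$ computation of $\delta_v$). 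Your sketch is correct in substance and actually more explicit than the paper's own proof.
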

\begin{proof}
	The above formula essentially follows from the computation in~\cite[Section 3]{JSW2017}. Indeed, it is checked in~\cite[Section 8.1]{Thackeray2022} that the assumptions in~\cite{JSW2017} are satisfied, then equation~\eqref{Control} comes from~\cite[Appendix B]{KY24} (for the residually reducible case), similarly as in~\cite[Theorem 8.1]{Thackeray2022}. Note that the assumption \textit{(i) Congruence: $k/2$ is not congruent to $0$ or $1$ modulo $p-1$} from \textit{loc. cit.} is not necessary because the arguments in~\cite{KY24} do not need to assume the (HT) hypothesis from~\cite{JSW2017}. Here $\delta_v$ is the localization map\[
		\loc_{v}/\tors: \H^1_f(K,T_f)_{/\tors}\to \H^1_f(K_{v},T_f)_{/\tors},
	\]
	 and the computation of $\delta_v$ comes from that in~\cite[Theorem 8.2]{Thackeray2022}, noting that we need to replace $\H^1_f(K,T_f)$ by $\H^1_f(K,T_f)_{/\tors}$ if we allow torsion.

\end{proof}
\begin{remark}
	\begin{itemize}
		\item[(i)] We now study the assumptions in~\cref{control}. We will mostly be concerned with the hypothetical situation where one aims to get a rank $1$ Tamagawa Number formula, so assuming $\ord_{s=k}L(f,s)=1$, (i) and (ii) are natural consequences of Gross--Zagier--Zhang--Kolyvain--Nekovář theorem, where one chooses a cycle $C_N$ coming from the classical Heegner cycles considered by both Zhang and Nekovář. From the sequence~\eqref{AJdescent}, they already imply $\H^1_\BK(K,A_f)$ has corank $1$. (iv) comes from the fact that $\H^1_f(K,V_f)$ is $1$-dimensional and propagation turns rank into corank. Now (iii) is a consequence of a standard hypothesis that the localization map should be surjective or at least non-zero.
		\item[(ii)] In practice, one can take $C$ to be certain Abel--Jacobi image of Heegner cycles. One could simply take $\omega$ to be the $\omega_f$ in~\cref{AJ}. Both will be discussed in~\cref{rank1}.
	\end{itemize}
\end{remark}

\section{Proof of the $p$-part Tamagawa number conjecture formula}\label{main}
\subsection{Preliminaries}

\subsubsection{Gross--Zagier formulae}

Recall the class $S_{2r}(E_x)$ defined in~\cref{zhangcycle}. Let $V$ and $V'$ be as in~\cite[Section 0.3]{Zhang1997}, Extend $f$ to a basis $\{f=f_1,\ ...,\ f_t\}$ to an orthonormal basis of $V'$ with respect to the Petersson inner product $(\cdot,\cdot)_{\Gamma_0(N)}$ and let $V_f'$ be the $f$-eigencomponent of $V'$. Put $s_f'$ to be the image of $ S_{2r}(E_x)$ in $V_f'$ and take $\chi$ to be trivial in \textit{loc.\ cit.}. 

\begin{theorem}[Gross--Zagier--Zhang formula]\label{GZZ}

\[L'(f,r)=\frac{2^{4r-1}\pi^{2r}(f,f)_{\Gamma_0(N)}}{(2r-2)!u^2h\sqrt{\vert D\vert}}\langle s_f',s_f'\rangle.\]

\end{theorem}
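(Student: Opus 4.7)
The statement is Zhang's generalization of the classical Gross--Zagier formula from weight $2$ to weight $2r$, so my proof proposal is essentially to follow the strategy of~\cite{Zhang1997} (with the Gross--Zagier original method as the blueprint). The idea is to compute both sides of the claimed identity independently and match them term by term in a suitable Fourier expansion. Concretely, I would set up an integral representation of $L(f/K,s)$ via a Rankin--Selberg convolution of $f$ with the theta series $\theta_K$ attached to the trivial character of $K$; by the Heegner hypothesis the convolution unfolds to a Petersson pairing of $f$ against a kernel $\Theta(s)$ built from a non-holomorphic Eisenstein series on $\Gamma_0(N)$. The automorphic induction identity $L(f/K,s) = L(f,s) L(f^K,s)$ together with the functional equation then forces the central vanishing that makes it meaningful to differentiate at $s=r$.

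Having set this up, the plan on the analytic side is: differentiate the kernel $\Theta(s)$ at $s=r$, apply a holomorphic projection (as in Gross--Zagier, now in weight $2r$, which is the reason the factorial $(2r-2)!$ and the power $2^{4r-1}\pi^{2r}$ appear in the normalization) to produce a holomorphic cusp form $\Theta^{hol}$ of weight $2r$ and level $\Gamma_0(N)$, and read off the $f$-isotypic Fourier coefficients of $\Theta^{hol}$. These coefficients decompose as a sum over ideal classes of $\cO_K$ of terms indexed by CM data, which is precisely the shape one expects to match a height pairing.

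On the geometric side, I would compute $\langle s_f', s_f'\rangle$ using Beilinson--Bloch height pairing on the Kuga--Sato variety $\tilde\cE^{2r-2}(N)$. One decomposes the global height as a sum of local heights, one for each place of $K_1/K$. At non-archimedean places the local heights come from arithmetic intersection multiplicities of CM cycles on an integral model of $\tilde\cE^{2r-2}(N)$, which reduce (via the moduli interpretation) to counts of isogenies between CM elliptic curves; the key input is Gross's formula on quaternion orders. At archimedean places, the height is given by a suitable Green's current pairing of $S_{2r}(E_x)$ with a Galois translate, which can be expressed in terms of special values of a modified Legendre function. Summing over ideal classes and projecting onto the $f$-isotypic component of $V'$ yields the $f$-eigencomponent of the global height.

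The final step is matching: after a change of variables (and using the normalization constant $c$ in~\cref{zhangcycle} chosen so that the self-intersection of $S_{2r}(E_x)$ on a fiber is $(-1)^{r-1}$), the Fourier coefficients of $\Theta^{hol}$ on the analytic side agree with the local heights on the geometric side term by term, which gives the stated identity with the precise constant $\frac{2^{4r-1}\pi^{2r}(f,f)_{\Gamma_0(N)}}{(2r-2)!u^2h\sqrt{|D|}}$. I expect the hardest step to be the archimedean height computation: one must identify Green's currents on $\tilde\cE^{2r-2}(N)_\bC$ attached to the higher-dimensional Heegner cycles (rather than divisors) and evaluate them at CM points, which in weight $2r$ involves derivatives of Eisenstein series of higher weight and the corresponding Whittaker transforms. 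The non-archimedean side, by contrast, is a direct generalization of Gross's quaternionic calculation. Since this theorem is due to Zhang, I would simply cite~\cite{Zhang1997} in the actual paper.
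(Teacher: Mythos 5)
Your proposal is correct and lands exactly where the paper does: the result is Zhang's theorem, and the paper's entire proof is the citation to \cite[Corollary 0.3.2]{Zhang1997}, which is also your stated conclusion. Your sketch of Zhang's argument (Rankin--Selberg kernel, holomorphic projection in weight $2r$, decomposition of the Beilinson--Bloch height into local terms matched coefficient-by-coefficient) is an accurate outline of that reference, but none of it is reproduced in the paper itself.
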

\begin{proof}
	This is~\cite[Corollary 0.3.2]{Zhang1997}.
\end{proof}

Here the pairing $<,>$ is the Gillet--Soul\'e pairing, which is only conjectured to be non-degenerate. We assume it is non-degenerate, so that a Heegner cycle is non-torsion if and only if $L'(f,r)$ is nonvanishing.

\begin{assumption}
	The Gillet--Soul\'e pairing is non-degenerate.
\end{assumption}

\begin{theorem}[$p$-adic Gross--Zagier formula]\label{BDP} 
	Let $\Delta_1$ be the classical Heegner cycle over $\Gamma_1(N)$ and let $C_1=\sum_{[\fra]\in\Pic(\cO_K)} AJ^f_{K_1}(\Delta_\fra)$. Then
	\[\log_{\omega_f}(\loc_{v}C_1)^2=(-4D)^{r-1}(1-p^{-r}a_p(f)+p^{-1})^{-2}L_p(f,\mathbf{N}_K^r).\]
	Here $\omega_f$ is the differential assigned to $f$ as in~\cite[Corollary 2.3]{BDP13}. In particular, $\loc_v(C_1)$ has finite index in $\H^1_f(K_v,T)$.
\end{theorem}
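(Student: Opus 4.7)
The stated identity is essentially the specialization to the trivial anticyclotomic Hecke character of the main theorem of Bertolini--Darmon--Prasanna, reformulated in terms of the classical Heegner cycles on $W_{2r-2}$ rather than the generalized Heegner cycles on $X_{2r-2}$. The proof breaks naturally into three steps.

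\textbf{Step 1 (apply the BDP main theorem).} First I would invoke the main theorem of \cite{BDP13} (Theorem 5.13 there), applied to the self-dual Hecke character corresponding to $\mathbf{N}_K^r$. Since $\mathbf{N}_K^r$ lies \emph{outside} the range of interpolation of $\cL_f^\BDP$, BDP's formula expresses the value $L_p(f,\mathbf{N}_K^r) = \cL_f^\BDP(\mathbf{N}_K^r)$ as, up to the explicit non-vanishing Euler factor $(1-p^{-r}a_p(f)+p^{-1})^2$, the square of $\beta_v^\BDP$ evaluated at the class-group sum $\sum_{[\fra]\in\Pic(\cO_K)}\AJ^f_{K_1,\BDP}(\Delta_\fra^\BDP)$ of the generalized Heegner cycles on $X_{2r-2}$, with $\beta_v^\BDP$ paired against the differential $\omega_f\wedge\omega_A^{r-1}\eta_A^{r-1}\in\Fil^{2r-1}\H^{4r-3}_\dR((X_{2r-2})_{K_{1,v}})$ recalled in \cref{AJ}.

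\textbf{Step 2 (descend from $X_{2r-2}$ to $W_{2r-2}$).} I would then translate Step~1 to a statement about the classical Heegner cycles $\Delta_\fra$ on $W_{2r-2}$ and the differential $\omega_f$ alone. The generalized cycle $\Delta_\fra^\BDP$ is constructed from $\Delta_\fra$ by taking a further product with a cycle on the factor $A^{r-1}$ (cf.~\cite[Section 2.4]{BDP13} and~\cref{2.2}); correspondingly, the $\omega_A^{r-1}\eta_A^{r-1}$-component of the pairing factors out of $\log_{\omega_f\wedge\omega_A^{r-1}\eta_A^{r-1}}$ and evaluates, via the standard de Rham pairing $\langle\omega_A,\eta_A\rangle$ on the CM elliptic curve $A$ (see \cite[Section 1.4]{BDP13}), to a power of $-4D$ that contributes exactly the prefactor $(-4D)^{r-1}$. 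The remaining $\omega_f$-part reduces precisely to $\log_{\omega_f}$ applied to $\AJ^f_{K_1}(\Delta_\fra)$ on $W_{2r-2}$; this identification is effectively the content of the comparison underlying \cref{compBPD}, obtained from~\cite[Section 10.5]{Thackeray2022}. Summing over $[\fra]\in\Pic(\cO_K)$ gives $C_1$, and combining with Step~1 yields the displayed identity.

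\textbf{Step 3 (finite index).} For the final clause, since $\H^1_f(K_v,V_f)$ is one-dimensional over $F$, the $\cO$-module $\H^1_f(K_v,T_f)$ has rank one, so $\loc_v(C_1)$ has finite index in it precisely when $\log_{\omega_f}(\loc_v C_1)\ne 0$; by the formula this is equivalent to $L_p(f,\mathbf{N}_K^r)\ne 0$, which is the standing non-vanishing hypothesis in any application of the theorem (and is automatic in the rank-one setting targeted later).

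\textbf{Expected obstacle.} The analytic and $L$-function identity is proved in~\cite{BDP13}, so there is nothing essentially new to establish on that side. The delicate step is Step~2: carefully tracking the descent of the differential $\omega_f\wedge\omega_A^{r-1}\eta_A^{r-1}$ on the larger variety $X_{2r-2}$ to $\omega_f$ on $W_{2r-2}$ under the cycle comparison, and verifying that the resulting combinatorial/period constant is exactly $(-4D)^{r-1}$ with the sign and normalization indicated. Once this bookkeeping is done, the summation over $\Pic(\cO_K)$ and the matching of the Euler factor are routine.
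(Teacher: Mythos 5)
Your overall strategy is sound, but it takes a longer route than the paper and the crucial middle step is under-justified. The paper's proof is a direct citation of \cite[Theorem 4.1.3]{BDP2017} with the parameters $r_1=2r-2$, $j=r_2=0$ and $\chi=\mathbf{N}_K$: that theorem is already stated for the \emph{classical} Heegner cycles $\Delta_\fra$ on $W_{2r-2}$ paired against $\omega_f$ alone, so no descent from $X_{2r-2}$ to $W_{2r-2}$ is needed. Your Step~1 (specializing \cite[Theorem 5.13]{BDP13} at the norm character, outside the interpolation range) and Step~3 (finite index from one-dimensionality of $\H^1_f(K_v,V_f)$ plus nonvanishing of $L_p(f,\mathbf{N}_K^r)$) match the paper's logic; in particular the finite-index argument is exactly the one the authors give.

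The gap is in Step~2. You appeal to ``the comparison underlying \cref{compBPD}'' (i.e.\ \cite[Section 10.5]{Thackeray2022}) to pass from $\Delta_\fra^\BDP$ on $X_{2r-2}$ to $\Delta_\fra$ on $W_{2r-2}$, but \cref{compBPD} is only an equality of \emph{indices} $[\im(\AJ):\AJ(\Delta)]$ in the respective Abel--Jacobi images --- an integrality/non-torsion statement. It does not give the identity of $p$-adic logarithms $\log_{\omega_f\wedge\omega_A^{r-1}\eta_A^{r-1}}(\AJ^f_{K_1,\BDP}(\Delta_\fra^\BDP)) = (-4D)^{(r-1)/2}\cdot\log_{\omega_f}(\AJ^f_{K_1}(\Delta_\fra))$ (or its square) with the exact period constant, which is what your argument needs. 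Establishing that identity --- tracking the $\omega_A^{r-1}\eta_A^{r-1}$-component through the projectors and the de Rham pairing on the CM curve --- is precisely the nontrivial content of \cite{BDP2017}, so as written you would be re-deriving the theorem the paper simply quotes, with the key computation left as the acknowledged ``expected obstacle.'' If you replace Step~2 by a citation of \cite[Theorem 4.1.3]{BDP2017} (which subsumes Steps~1 and~2 simultaneously), your proof coincides with the paper's.
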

\begin{proof}
	This is~\cite[Theorem 4.1.3]{BDP2017}. We make the choices $r_1=2r-2$, $j=r_2=0$ (corresponding to classical Heegner cycles) and $\chi=\mathbf{N}_K$. 
That $\loc_v(C_1)$ has finite index is an obviously corollary since the above formula shows $\cO_L\log_{\omega_f}(\loc_v(C_1))$ has finite index in $\cO_L\subset L\xleftarrow{\log_{\omega_f},\cong}\H^1_f(K_v,V)$ and $\cO_L\supset \log_{w_f}\H^1_f(K_v,T)\supset O_L\log_{w_f}(\loc_v(C_1))$. Note that $L_p(f,\mathbf{N}_K^r)$ is our notation is identified with $\cL_f^\BDP(0)$.
\end{proof}

\subsection{Computation of the local index in the Wach module}

\begin{theorem}\label{LLZ}
In~\cref{Control}, we have	\[\ord_p(\cO_L:\log_\omega(\H^1_f(K_v,T_f)_{/\tors}))=\ord_p\bigl(\frac{\#\H^0(K_w,A_f)}{1-p^{-r}a_p(f)+p^{-1}}\bigr)\]
\end{theorem}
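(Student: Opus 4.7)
The plan is to compute both factors by passing to the Wach module associated to $T_f|_{G_{K_v}}$, following the general framework of Berger and the explicit computations of Lei--Loeffler--Zerbes (whence the name). Since $p = v\ol v$ splits in $K$, we have $K_v \iso \Q_p$, and $V_f|_{G_{K_v}}$ is crystalline (as $p\nmid N$) and ordinary (as $a_p$ is a $p$-adic unit). This puts us exactly in the setting where the Wach module $\bN(T_f)$, a free $\cO\llbracket\pi\rrbracket$-module of rank two equipped with commuting semilinear actions of $\varphi$ and $\Gamma$, recovers $T_f$ via Fontaine's functors and satisfies $\bD_\cris(V_f) = \bN(T_f)/\pi\bN(T_f) \otimes_\cO F$.

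First, I would recall from the theory of the Berger/Perrin-Riou regulator map that the Bloch--Kato exponential admits an explicit integral description in terms of $\bN(T_f)$. In the ordinary case the Wach module splits as an extension of a rank one piece $\bN^-(T_f)$ on which $\varphi$ acts (after normalization) by the non-unit, by the rank one sub $\bN^+(T_f)$ on which $\varphi$ acts by $u = a_p\cdot p^{-r}$; this matches the filtration $\Fil^+(V_f)$ introduced in \cref{section:control}. Under the chosen basis $\omega_f$ of $\Fil^1 \bD_\dR(V_f)$, the map $\log_\omega$ becomes pairing with (the image of) $\omega_f$ in $\bD_\cris(V_f)$.

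Second, the index computation proceeds by comparing $\log_\omega(\H^1_f(K_v,T_f)_{/\tors})$ to its rational span $\cO_L$-times a basis vector. The standard exact sequence for crystalline representations with no $\varphi=1$ or $\varphi=p^{-1}$ eigenvalues gives
\[
0 \to \H^0(K_v, V_f)/\H^0(K_v,T_f) \to \bD_\cris(T_f)/(1-\varphi)\bD_\cris(T_f) \to \H^1_f(K_v,T_f)/\tors \to \bD_\cris^{\varphi=1} \to 0,
\]
and the determinant of $(1-\varphi)$ on $\bD_\cris(V_f)$ equals
\[
\det(1-\varphi \mid \bD_\cris(V_f)) \;=\; 1 - a_p p^{-r} + p^{-1},
\]
as is immediate from the characteristic polynomial of Frobenius on $\rho_f(1-r)$. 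The contribution $\#\H^0(K_w,A_f)$ then enters through the long exact sequence obtained from $0 \to T_f \to V_f \to A_f \to 0$, identifying the torsion in $\H^1(K_v,T_f)$ with $\H^0(K_v,A_f)$ (which equals $\H^0(K_w,A_f)$ under the splitting).

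The main obstacle will be pinning down the integral normalizations: the Wach module has its canonical basis, but $\omega_f$ is the differential assigned to $f$ through the geometric comparison isomorphism $\bD_\cris(V_f) \iso \H^{2r-1}_\dR(W_{2r-2})_{f\text{-comp}}$ used in \cref{AJ}. One has to verify that under this identification $\omega_f$ corresponds (up to a $p$-adic unit) to the standard basis vector on $\Fil^1 \bD_\cris(V_f)$, so that no extraneous $p$-powers appear. This is precisely the kind of normalization statement proved in the Lei--Loeffler--Zerbes framework, and once it is invoked, the equality in the theorem follows by assembling the determinant of $(1-\varphi)$ with the torsion term from the short exact sequence.
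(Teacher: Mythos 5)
Your proposal follows essentially the same route as the paper: both pass to the Wach module $\bN(T_f)$ (via Fontaine--Laffaille/Berger, following Lei--Loeffler--Zerbes), identify the non-unit contribution with $\det(1-\varphi\mid\bD_\cris(V_f))=1-a_pp^{-r}+p^{-1}$ computed from the matrix of $\varphi$ on $\bN(T_f)/\pi\bN(T_f)$, and identify the torsion contribution with $\H^0(K_v,A_f)$ via $0\to T_f\to V_f\to A_f\to 0$. The one imprecision is your four-term exact sequence: $\log_\omega$ lands in $\bD_\dR/\Fil^0$, not in $\bD_\cris/(1-\varphi)$, so the correct integral input is the Bloch--Kato sequence $0\to h^0(D)\to D^0\xrightarrow{1-\varphi}D\to h^1(D)\to 0$ with $D^0=D\cap\Fil^0\bD_\dR(V_f)$; the paper then needs a short snake-lemma argument to show that the cokernel of $1-\varphi\colon D/D^0\to h^1(D)$ coincides with $\coker(1-\varphi\mid D)$, which is exactly the step that makes the full determinant (rather than its restriction to a sub or quotient) appear, and your writeup should include it.
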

\begin{proof}
	For brevity, we write $(V,T,A)$ for $(V_f,T_f,A_f)$.
	
	We begin the proof by noting that $\omega$ does not play any role in the formula. Indeed, by Fontaine-Laffaille theory (see for example~\cite[Theorem 6.10.8]{LLZ2014}. See also~\cite[Section 4]{BK07}), the Bloch-Kato logarithm takes $H^1_f(K_{v},T)_{/\tors}$ to $ \frac{(1-\phi)^{-1}D}{(1-\phi)^{-1}D\cap \Fil^0\mathbf{D}_\dR(V)}$, where $D\subset \mathbf{D}_\dR(V)$ is the strongly divisible lattice corresponding to $T$. Here $\phi$ is a Frobenius action.

	The map $\exp_\omega$ is the inverse of a composition of isomorphisms (see~\cref{log})
	\begin{center}\begin{tikzcd}
	\log_\omega: \H^1_f(K_v,V)\ar[r,"\log"]& \frac{\D_\dR(V)}{\Fil^0 \D_\dR(V)} \ar[r,"\isom"]& L\ar[r,"\cdot\omega"]& L\\
	\H^1_f(K_v,T)_{/\tors}\ar[r,maps to,"\log"]\ar[u, phantom, sloped, "\subset"]& \frac{(1-\phi)^{-1}D}{(1-\phi)^{-1}D\cap\Fil^0\D_\dR(V)} \ar[r,maps to,"\isom"]\ar[u, phantom, sloped, "\subset"]& p^m\cO_L\ar[u, phantom, sloped, "\subset"]\ar[r,maps to,"\cdot\omega"] & p^m\cO_L\ar[u, phantom, sloped, "\subset"]\\
	\end{tikzcd}\end{center}
where $m=\ord_p(\cO_L:\log_\omega(\H^1_f(K_v,T)_{/\tors}))$. It is then clear that we could ignore the last column and compute $m$ with the first three columns.

Now by~\cite[Theorem 4.5]{BK07}, the index we need to compute is $\ord_p(h^1(D)_{/\tors}:(1-\phi)\frac{D}{D^0})$ where $D^0\coloneq D\cap \Fil^0(\D_\dR(V))$ and $h^1(D)=\coker(1-\phi|_{D^0}: D^0\to D)$. Consider the commutative diagram
	\begin{center}\begin{tikzcd}
			&0\ar[d]&0\ar[d]&0\ar[d]&\\
		0\ar[r]&D^0\ar[r]\ar[d,"="]& D\ar[r]\ar[d,"1-\phi"]& \frac{D}{D^0}\ar[r]\ar[d,"1-\phi"] &0\\

		0\ar[r]&D^0\ar[r,"1-\phi|_{D^0}"]\ar[d]& D\ar[r]\ar[d]& h^1(D)\ar[r]\ar[d] &0\\
		
		&0&\coker(1-\phi)&\coker(\frac{D}{D^0}\xrightarrow{1-\phi} h^1(D))&\\				
\end{tikzcd}\end{center}
where $\coker(\frac{D}{D^0}\xrightarrow{1-\phi} h^1(D))$ is identified with $\coker(1-\phi)$ by snake lemma. Therefore $m=\ord_p(\frac{h^1(D)_{/\tors}}{\coker(1-\phi)})$.

We now compute the denominator using the explicit description of the strongly divisible lattices in $\mathbf{D}_\dR(V)$ and Wach modules given in~\cite[Section 2--5]{LZ2013}. Recall that we have a self-dual twist\begin{equation*}\rho_f^*(1-r)=\left(\begin{array}{cc} \chi^{r}\lambda(\alpha) & *\\ 0 & \chi^{1-r}\lambda(\alpha^{-1}) \end{array}\right)\end{equation*}
where $\chi$ is the $p$-adic cyclotomic character and $\lambda(x)$ denotes the unramified character of $G_{\bQ_p}$ mapping geometric Frobenius to $x$. Here $\alpha$ is the unit root of the Hecke polynomial \[T^2-a_p(f)T+p^{k-1}.\] 

Letting $\alpha'=p^{1-r}\alpha$, then we get the `twisted' Hecke polynomial \[T^2-p^{1-r}a_p(f)T+p\] having $\alpha'$ as a root. In particular, $\ord_p(1-\alpha')=\ord_p(1-p^{1-r}a_p+p)$.

 Now in the $(\phi,\Gamma)-$module in section 5 of \textit{op. cit.}, the matrices $P$ and $G$ giving the action of $\phi$ and a $\gamma\in\Gamma$ respectively, look like\begin{equation*}P=\left(\begin{array}{cc} \alpha & *\\ 0 & \alpha^{-1} \end{array}\right)
\end{equation*}
and\begin{equation*}
	G=\left(\begin{array}{cc} \chi(\gamma)^{r} & *\\ 0 & \chi(\gamma)^{1-r} \end{array}\right)
\end{equation*}in a basis $(v_1,v_2)$. Letting $(n_1,n_2)=(\pi^{-r}v_1,\pi^{r-1}v_2)$, then the matrices of $\phi$ and $\gamma$ in the basis $(n_1,n_2)$ are given by\begin{equation*}P'=\left(\begin{array}{cc} \frac{\pi^{r}}{\phi(\pi)^{r}}\alpha & *\\ 0 & \frac{\pi^{1-r}}{\phi(\pi)^{1-r}}\alpha^{-1} \end{array}\right)
\end{equation*}
and\begin{equation*}
G'=\left(\begin{array}{cc} \frac{\pi^r}{\gamma(\pi^r)}\chi(\gamma)^{r} & *\\ 0 & \frac{\pi^{1-r}}{\gamma(\pi^{1-r})}\chi(\gamma)^{1-r} \end{array}\right)
\end{equation*}
where $\phi(\pi)=(\pi+1)^p-1$. One checks as in \textit{op. cit.} that the $\cO\otimes \bZ_p\ldbrack \pi\rdbrack-$span of $(n_1,n_2)$ is the Wach module $\mathbb{N}(T)$. Since $D=\frac{\mathbb{N}(T)}{\pi\mathbb{N}(T)}$, the matrix of $\phi$ on $D$ looks like\begin{equation*}P''=\left(\begin{array}{cc} \frac{1}{p^{r}}\alpha & *\\ 0 & \frac{1}{p^{1-r}}\alpha^{-1} \end{array}\right),
\end{equation*}
so $\coker(1-\phi)$ is given by $\det(1-P'')=(1-p^{-r}\alpha)(1-p^{r-1}\alpha^{-1})$. Therefore,\begin{align*}
	\ord_p\bigl(\coker(1-\phi))&=\ord_p((1-p^{-r}\alpha)(1-p^{r-1}\alpha^{-1})\bigr)\\
	&=\ord_p\bigl((\frac{p^{r}-\alpha}{p^{r}})(\frac{1-p^{1-r}\alpha}{p^{1-r}\alpha})\bigr)\\
	&=\ord_p(\frac{1-\alpha'}{p})\ \ \ \ \ \ \  \text{($\alpha$ is a $p$-adic unit)}\\
	&=\ord_p(\frac{1-p^{1-r}a_p(f)+p}{p}).
\end{align*}
Finally, that $h^1(D)_{\tors}\isom \H^1_f(K_v,T)_{\tors}=\H^1(K_v,T)_{\tors}$ is identified with $\H^0(K_v,A)$ is because it's nothing but the image $\H^0(K_v,A)\to \H^1(K_v,T)$ and $\H^0(K_v,V)=0$.
\end{proof}

\subsection{Discussion of the $p$-part of Tamagawa Number formula in rank $0$}

In this section we discuss some partial results towards the $p$-part of Tamagawa Number formula for the modular form $f\in S_{2r}^{new}(\Gamma_0(N))$, and analyze what is needed to get a full proof. It is based on the cyclotomic Iwasawa Main Conjectures proved in~\cite{KY24} and the cyclotomic control theorem~\cref{cyccontrol}. 

\begin{theorem}\label{pTNC}
	Let $f\in S_{2r}^{new}(\Gamma_0(N))$ be a newform with trivial nebentypus, and let $p>2$ be a prime of good ordinary reduction for $f$. Assume that $p$ is an Eisenstein prime for $f$, i.e., the residual representation $\ol\rho_f$ is reducible, and that $2\leq 2r\leq p-1$. Also assume that the sub-representation $\F(\phi)$ of $\ol\rho_f$ is ramified at $p$ and even when restricted to the decomposition group $G_p$. 
	Further assume~\eqref{per}.
	 If $L(f,r)\ne 0$, then\[
	\ord_p(\frac{L(f,r)}{\Omega_f})=\ord_p(\#\Sha_\Nek(f/\bQ)\cdot\Tam(A_f/\bQ))\]where $\Tam(f/\bQ)=\prod_{\ell\mid N} c_\ell(A_f/\bQ)$ is the product over the bad primes $\ell$ of $f$ of the Tamagawa numbers of $f$.
\end{theorem}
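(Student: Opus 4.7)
The plan is to combine the cyclotomic Iwasawa Main Conjecture~\cref{cycMCselfdual} with the cyclotomic control theorem~\cref{cyccontrol}, then translate between the $p$-adic and archimedean $L$-values via the interpolation property~\eqref{cycpL}. The hypothesis~\eqref{per} is precisely what upgrades~\cref{cycMCselfdual} from an equality in $\Lambda_\bQ\otimes\bQ_p$ to one in $\Lambda_\bQ$, which is what the precise $p$-part formula requires. The assumption that $\F(\phi)$ is ramified and even (and hence $\F(\psi)$ is unramified and odd) gives $a_p(f)\not\equiv 1\pmod{\frp}$, placing us inside the range of validity of~\cref{cyccontrol}. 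Since $L(f,r)\ne 0$, the interpolation formula~\eqref{cycpL} gives $\cL(0)\ne 0$, because the Euler factor $(1-p^{r-1}/\alpha)^2$ is a $p$-adic unit (automatic for $r\geq 2$ since $\alpha\in\cO^\times$, and handled by $a_p(f)\not\equiv 1\pmod{\frp}$ in the edge case $r=1$).

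To upgrade the IMC, I would invoke the integrality statement~\cref{intdiv} together with the discussion following~\eqref{LfLg}: both $\mu(\cL_p(g,T))$ and $\mu^\alg_{f\otimes\omega^{p-r}}$ vanish, and the relation $\cL_p(g,T)=(\Omega_f^+/\Omega_g^+)\cL_p(f,T)$ combined with~\eqref{per} and the integrality $\cL_f^{(p-r)}\in\Lambda_\bQ$ forces $\mu(\cL_f^{(p-r)})=0$. Indeed, a positive $\mu$-invariant for $\cL_p(f,T)$ would contradict $\ord_p(\Omega_f^+/\Omega_g^+)\geq 0$ together with $\mu(\cL_p(g,T))=0$. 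Hence~\cref{cycMCselfdual} becomes a genuine equality of ideals in $\Lambda_\bQ$, and letting $\cF$ denote a generator of $\Char_{\Lambda_\bQ}(\H^1_\Gr(\bQ,M_f)^\vee)$ we obtain
\[\ord_p(\cF(0))=\ord_p(\cL(0)).\]
Moreover, $\cF(0)\ne 0$ together with a standard descent argument (compare~\cite{LV21}) yields the finiteness of $\H^1_\BK(\bQ,A_f)$, verifying the hypothesis of~\cref{cyccontrol}.

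Applying~\cref{cyccontrol}(iii) then gives
\[\ord_p(\cF(0))=\ord_p\bigl(\#\H^1_\BK(\bQ,A_f)\bigr)+\sum_{v\in\Sigma\setminus\{p\}}\ord_p(c_v(A_f/\bQ)).\]
Since $\H^1_\BK(\bQ,A_f)$ is finite, the Abel--Jacobi image in~\eqref{AJdescentQ} has $\bZ_p$-corank zero, so $\H^1_\BK(\bQ,A_f)=\Sha_\Nek(f/\bQ)$ as $p$-primary groups; as $V_f$ is unramified outside $Np\infty$ and archimedean contributions are trivial, the sum of local Tamagawa factors equals $\ord_p(\Tam(A_f/\bQ))$. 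Combining these with~\eqref{cycpL}, and noting that $(r-1)!$ is a $p$-adic unit since $2r\leq p-1$ while $(2\pi i)^{r-1}/\Omega_f$ is the period normalization entering the $p$-adic $L$-value, yields $\ord_p(\cL(0))=\ord_p(L(f,r)/\Omega_f)$ and hence the claimed formula. The genuinely hard step is the middle one: without~\eqref{per} the IMC is known only modulo $p$-powers, which would obscure the precise $p$-adic valuation. Removing~\eqref{per} in higher weight---equivalently, proving the expected $p$-adic unit comparison $\Omega_f^+\sim_p\Omega_g^+$ known for elliptic curves by~\cite[Lemma 3.6]{GV00}---is the principal remaining obstruction to an unconditional proof, as highlighted in the introduction.
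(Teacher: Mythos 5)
Your proposal follows essentially the same route as the paper's proof: the $(p-r)$-th branch cyclotomic IMC upgraded to an integral equality in $\Lambda_\bQ$ via \eqref{per} and \cref{intdiv}, combined with the control theorem \cref{cyccontrol} and the interpolation formula \eqref{cycpL}, with the same treatment of the Euler factor $(1-p^{r-1}/\alpha_p)^2$. The one divergence is that the paper obtains the finiteness of $\H^1_\BK(\bQ,A_f)$ directly from $L(f,r)\ne 0$ via the Kolyvagin--Nekov\'a\v{r}-type result \cite[Theorem 4.21]{LV23}, whereas you derive it from $\cF(0)\ne 0$ by descent --- which is valid, but strictly speaking uses the finite-kernel/cokernel statement underlying \cref{cyccontrol} rather than its literal formulation, which takes finiteness of $\H^1_\BK(\bQ,A_f)$ as a hypothesis.
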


\begin{proof}
	From~\cite[Theorem 4.21]{LV23}, since $L(f,r)\ne 0$, $\H^1_\BK(\bQ,A_f)$ is finite and from the sequence~\eqref{AJdescentQ}, $\im(AJ_\bQ)\otimes \bQ_p/\bZ_p=0$ and $\Sha_\Nek(f/\bQ)=\H^1_\BK(\bQ,A_f)$. In particular, $\Sha_\Nek(f/\bQ)=\Sha_\BK(f/K)=\Sha(A_f/K)[\frp^\infty]$.
	
	Let $\cF_\Gr\in\Lambda_\bQ$ be a generator of the characteristic ideal of $\H^1_\Gr(\bQ,M_f)^\vee$, then from~\cref{cyccontrol}, there is an equality\[	\#(\cO/\cF_\Gr(0))=\#\Sha(A_f/K)[\frp^\infty]\cdot\prod_{v\in\Sigma,v\ne p}c_v(A_f/\bQ).\]
	Under the given assumptions, the cyclotomic Iwasawa Main Conjecture, namely the equality\[
	(\cF_\Gr(0))=(\cL_f^\MTT(\langle\chi_\cyc\rangle^{r}))\in\Lambda_\bQ,\] 
	follows from~\cref{cycMC} (see also the end of~\cref{cycIwa}).
	
	On the other hand, from~\cref{cycpL} ,up to a $p$-adic unit (note that $k=2r\leq p-1$),\[
	\cL_f^\MTT(\langle\chi_\cyc\rangle^{r})=(1-\frac{p^{r-1}}{\alpha_p})^2\cdot\frac{L(f,r)}{\Omega_f}\]
	where $\alpha_p$ is the unit root of $x^2-a_p(f)x+p^{2r-1}$. When $r>1$, $1-\frac{p^{r-1}}{\alpha_p}$ is obviously a unit. When $r=1$, $1-\frac{1}{\alpha_p}$ is still a unit since $\alpha_p\equiv a_p(f)\notequiv 1\pmod{p}$.
	
\end{proof}

\subsection{Proof of a higher weight $p$-converse theorem}
In this section, we prove~\cref{pConvintro} in the introduction. We will follow closely the arguments in~\cite[Theorem 5.2.1]{CGLS}. As in the case for elliptic curves, the $p$-converse theorem is a consequence of the anticyclotomic Heegner Point Iwasawa Main Conjecture ((IMC1) in~\cref{IMC}), an anticyclotomic control theorem~\cref{control} as well as essentially a Kolyvagin's theorem (\cite[Theorem]{Nek92}. See also~\cite[Remark 5.4]{Vigni}).

We assume the Gillet--Soul\'e pairing is non-degenerate in this section.

We first recall Nekovář's theorem.
\begin{theorem}\label{KolyNek}
	Assume $p\nmid 2N$. Let $y_0=\cores_{K_1/K} \AJ^f_{K_1}(\Delta_N)\in\im(AJ^f_K)$ be a Heegner cycle and assume $y_0$ is non-torsion. Then\begin{enumerate}
		\item $\im(AJ^f_K)\otimes\bQ=F\cdot y_0$,
		\item $\Sha_\Nek(f/K)$ is finite. 
	\end{enumerate}
\end{theorem}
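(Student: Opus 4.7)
The plan is to derive this from the anticyclotomic Heegner Point Main Conjecture (IMC1) of~\cref{IMC} combined with the Greenberg-type control theorem~\cref{anticon}, in analogy with the argument of~\cite[Theorem 5.2.1]{CGLS}. Throughout, I assume $r$ is odd so that (IMC1) is available in the required form.

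The first step is to upgrade the non-torsion hypothesis on $y_0$ to the non-torsion of the $\Lambda_K$-adic Heegner class $\kappa_\infty$ appearing in (IMC1). Recall that $\kappa_\infty$ is built from BDP's generalized Heegner cycles $\Delta_\fra^\BDP$ over $\Gamma_1(N)$, while $y_0 = \cores_{K_1/K}\AJ^f_{K_1}(\Delta_N)$ arises from Masoero's cycle over $\Gamma(N)$. Running through the index identifications of~\cref{comp} and~\cref{compBPD}, together with the equality $\AJ^f_{K_1}(\Delta_N)=\AJ^f_{K_1}(\tilde{\Gamma})$ from the end of~\cref{cyclecomp}, the non-torsion of $y_0$ forces $\AJ^f_{K_1,\BDP}(\Delta_\fra^\BDP)$ to be non-torsion for every $\fra$, so the bottom-layer Heegner class $\kappa_1 = \sum_{[\fra]\in\Pic(\cO_K)}\AJ^f_{K_1,\BDP}(\Delta_\fra^\BDP)$ is non-torsion in $\H^1_f(K,T_f)$. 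Since $\kappa_1$ is a projection of $\kappa_\infty$, this forces $\kappa_\infty$ to be $\Lambda_K$-non-torsion.

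Second, I would invoke (IMC1): both $S = \H^1_{\cF_{\Lambda_K}}(K,\bT)$ and $\cX = \H^1_{\cF_{\Lambda_K}}(K,M_f)^\vee$ have $\Lambda_K$-rank exactly one, with
\[
\Char_{\Lambda_K}(\cX_\tors) = \Char_{\Lambda_K}(S/\Lambda_K\cdot\kappa_\infty)^2.
\]
Applying~\cref{anticon} to compare $\cX^{\Gamma_K}$ with the Pontryagin dual of $\H^1_\BK(K,A_f)$, and using that the kernel and cokernel of the restriction map are finite, one concludes that $\H^1_\BK(K,A_f)$ has $\cO$-corank exactly one. In the descent sequence~\eqref{AJdescent}
\[
0\to \im(\AJ^f_K)\otimes\bQ_p/\bZ_p\to\H^1_\BK(K,A_f)\to\Sha_\Nek(f/K)\to 0,
\]
the fact that $y_0\in\im(\AJ^f_K)$ is non-torsion already gives the leftmost term $\cO$-corank at least one. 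Matching coranks forces $\Sha_\Nek(f/K)$ to be finite, establishing~(2), and also forces $\im(\AJ^f_K)\otimes\bQ = F\cdot y_0$, establishing~(1).

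The main obstacle will be the first step: carefully tracking the comparison between the three flavors of Heegner cycles (Masoero's over $\Gamma(N)$, BDP's classical cycles, and BDP's generalized cycles) through the norm-compatible system underlying $\kappa_\infty$, so that non-torsion at the bottom layer genuinely propagates to $\Lambda_K$-non-torsion of the full tower. Once this bridge is in place, the remaining invocations of (IMC1) and the control theorem~\cref{anticon} are essentially automatic.
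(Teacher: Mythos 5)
The paper does not actually prove this statement: it is Nekov\'a\v{r}'s theorem, quoted from~\cite{Nek92} (the extension of Kolyvagin's Euler system argument to Heegner cycles on Kuga--Sato varieties), and the paper's proof of the $p$-converse theorem then uses it as an \emph{input} alongside (IMC1). Your proposal to derive it from (IMC1) therefore inverts the paper's logical structure, and it also silently imposes hypotheses absent from the statement ($r$ odd, plus the auxiliary conditions on $K$ needed for \cref{IMC}), whereas the theorem as stated assumes only $p\nmid 2N$.

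Beyond this structural issue, two steps of your argument have genuine gaps. First, the passage from non-torsion of $y_0$ to non-torsion of $\kappa_1=\sum_{[\fra]}\AJ^f_{K_1,\BDP}(\Delta_\fra^\BDP)$ fails: \cref{comp} and \cref{compBPD} give you that each individual $\Delta_\fra^\BDP$ is non-torsion, but $\kappa_1$ is a \emph{sum} over the class group, and a sum of non-torsion classes can perfectly well be torsion. The paper is careful to use only the reverse implication ($\kappa_1$ non-torsion $\Rightarrow$ each $\Delta_\fra^\BDP$ non-torsion $\Rightarrow$ $\Delta_N$ non-torsion) at the end of \cref{cyclecomp} and in the proof of \cref{pconv}. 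Second, even granting that $\kappa_\infty$ is non-torsion, (IMC1) only tells you that $\Char_{\Lambda_K}(\cX_{\tors})=\Char_{\Lambda_K}(S/\Lambda_K\cdot\kappa_\infty)^2$ is a nonzero ideal; to conclude via \cref{anticon} that $\H^1_\BK(K,A_f)$ has corank exactly one you need $(\cX_{\tors})_{\Gamma_K}$ to be finite, i.e.\ that $T$ does not divide this characteristic ideal, and that does not follow from non-torsion of $\kappa_\infty$ alone. The intended proof is the Euler-system one: starting from the non-torsion cycle $y_0$, one forms Kolyvagin's derivative classes attached to products of Kolyvagin primes and uses their ramification at those primes to bound the Selmer group by $F\cdot y_0$ plus a finite group; this is exactly what \cite{Nek92} carries out, and it cannot be replaced by the Iwasawa-theoretic descent you describe.
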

A natural consequence of this is that, in the event where $y_0$ is non-torsion, from the sequence~\eqref{AJdescent}, $\H^1_\BK(K,A_f)$ must be of corank $1$ and $\im(AJ^f_K)\otimes \bQ_p/\bZ_p$ must be its maximal divisible subgroup. Thus there is an equality\[
\Sha_\BK(f/K)=\Sha_\Nek(f/K).\]
The version of the Gross--Zagier--Zhang--Kolyvagin--Nekovář's theorem we will need is the following.
\begin{theorem}\label{GZZKN}
	Let $t\in\{0,1\}$. If $\ord_{s=r}L(f/\bQ,s)=t$, then \[\dim_F(\im(AJ^f_\bQ)\otimes \bQ)=\corank_{\bZ_p}(\H^1_\BK(\bQ,A_f))=t,\] and $\Sha_\Nek(f/\bQ)[\frp^\infty]<\infty$.
\end{theorem}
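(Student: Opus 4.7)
The plan is to reduce the theorem over $\bQ$ to the corresponding rank-one situation over an auxiliary imaginary quadratic field $K$, where Zhang's formula~(\cref{GZZ}) and Nekovář's theorem~(\cref{KolyNek}) can be combined directly. First I would invoke analytic non-vanishing results for quadratic twists of modular $L$-functions (Bump--Friedberg--Hoffstein, Murty--Murty, Waldspurger) to choose an imaginary quadratic field $K$ satisfying the Heegner hypothesis for $N$ and with $\ord_{s=r}L(f^K/\bQ,s)=1-t$. The factorization $L(f/K,s)=L(f/\bQ,s)L(f^K/\bQ,s)$ then forces $\ord_{s=r}L(f/K,s)=1$.

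With analytic rank one established over $K$, Zhang's formula gives $\langle s_f',s_f'\rangle\ne 0$, and the assumed non-degeneracy of the Gillet--Soulé pairing shows Zhang's cycle $S_{2r}(E_x)$ is non-torsion. Via the relation~\eqref{zhangcyc} between Zhang's and Masoero's cycles together with the injectivity of the Abel--Jacobi map, the class $y_0=\cores_{K_1/K}\AJ^f_{K_1}(\Delta_N)$ is non-torsion in $\im(AJ^f_K)$. Nekovář's theorem~(\cref{KolyNek}) then yields $\im(AJ^f_K)\otimes\bQ=F\cdot y_0$ and $\#\Sha_\Nek(f/K)<\infty$, and via the sequence~\eqref{AJdescent} gives $\H^1_\BK(K,A_f)$ of corank one.

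The descent from $K$ to $\bQ$ proceeds by decomposing all relevant groups under $\tau\in\Gal(K/\bQ)$. Since $p$ is odd, $\H^1(K,A_f)$, $\H^1_\BK(K,A_f)$, $\im(AJ^f_K)$, and $\Sha_\Nek(f/K)$ all split as direct sums of their $\tau$-eigenspaces, with the $(+1)$-component recovering the corresponding object over $\bQ$ for $A_f$ and the $(-1)$-component recovering the one for the quadratic twist $A_f^K$. A standard computation shows $\tau y_0\equiv -\epsilon_{f/\bQ}\cdot y_0$ modulo torsion, where $\epsilon_{f/\bQ}=(-1)^t$ is the global root number of $f/\bQ$. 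Thus $y_0$ lies in the $(-1)^{t+1}$-eigenspace: the corank-one piece of $\H^1_\BK(K,A_f)$ sits in the $A_f$-component precisely when $t=1$ and in the $A_f^K$-component when $t=0$. This pins down $\corank_{\bZ_p}\H^1_\BK(\bQ,A_f)=t$ and $\dim_F(\im(AJ^f_\bQ)\otimes\bQ)=t$, while $\Sha_\Nek(f/\bQ)[\frp^\infty]$ is finite as a $\tau$-summand of $\Sha_\Nek(f/K)$.

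The main obstacle is the precise sign analysis, namely verifying $\tau y_0\equiv -\epsilon_{f/\bQ}\cdot y_0$ modulo torsion for the higher-weight $\Gamma(N)$-cycle $\Delta_N$ after passing through the projectors $\Pi_B\Pi_\epsilon$ of~\cref{Macycle}. One must adapt the classical weight-two argument (involving the Atkin--Lehner involution $w_N$) to the Kuga--Sato setting, or appeal directly to the computations of Nekovář~\cite{Nek92}, and check the compatibility of the $\tau$-splittings with the Bloch--Kato local conditions at the primes dividing $Np$. Once these bookkeeping items are settled, the rest of the argument is a formal assembly of results already stated in the paper.
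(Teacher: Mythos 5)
Your proposal is correct and follows essentially the same route as the paper, whose proof of~\cref{GZZKN} simply combines~\cref{GZZ} and~\cref{KolyNek} via the argument of~\cite[Theorem 4.21]{LV23} --- precisely the auxiliary-quadratic-field choice plus $\tau$-eigenspace descent you describe, with the sign computation $\tau y_0\equiv-\epsilon_{f/\bQ}\cdot y_0$ supplied by the computations in~\cite{Nek92}. One minor remark: in this direction the non-degeneracy of the Gillet--Soul\'e pairing is not actually needed, since $\langle s_f',s_f'\rangle\neq 0$ already forces $s_f'\neq 0$.
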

\begin{proof}
	This is a combination of~\cref{GZZ} and~\cref{KolyNek}. The proof is similar to that of~\cite[Theorem 4.21]{LV23}.
\end{proof}

We now state and prove the converse theorem.

\begin{theorem}\label{pconv}
	Let $f\in S^{new}_{2r}(\Gamma_0(N))$ be a newform of weight $2r\geq 2$ with $r$ odd and $p\nmid 2N$ be an Eisenstein prime of good ordinary reduction for $f$. Assume the Gillet--Soul\'e pairing is non-degenerate and all Abel--Jacobi maps are injective. Let $t\in\{0,1\}$. Then\[
	\corank_{\bZ_p}(\H^1_{\BK}(\bQ,A_f))=t\Rightarrow \ord_{s=r}L(f/\bQ,s)=t,
	\]
	and so $\dim_F(\im(\AJ^f_\bQ)\otimes\bQ)=t$ and $\#\Sha_\Nek(f/\bQ)[p^\infty]<\infty$.
\end{theorem}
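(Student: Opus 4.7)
The plan is to adapt~\cite[Theorem 5.2.1]{CGLS} to the higher weight setting, combining the Heegner Point Main Conjecture~\cref{IMC}(IMC1), the anticyclotomic control theorem~\cref{anticon}, the $p$-adic Gross--Zagier formula~\cref{BDP}, Zhang's formula~\cref{GZZ}, the cycle comparisons~\cref{comp} and~\cref{compBPD}, and the analytic-to-algebraic direction~\cref{GZZKN} applied to the quadratic twist $f^K$. The key idea is to reduce both cases $t\in\{0,1\}$ to a situation where $\corank_\cO\H^1_\BK(K,A_f)=1$ for a suitably chosen $K$, force the non-vanishing of a Heegner class via (IMC1), and translate this into the non-vanishing of an $L$-derivative.

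First I would choose an imaginary quadratic field $K$ satisfying~\eqref{heeg}, with $p$ split in $K$, $D_K\ne-3$ odd, and controlled parity of $f^K$: if $t=0$, arrange $\epsilon(f^K)=-1$ with $L'(f^K,r)\ne 0$; if $t=1$, arrange $\epsilon(f^K)=+1$ with $L(f^K,r)\ne 0$. Existence of such $K$ follows from Bump--Friedberg--Hoffstein / Waldspurger and Murty--Murty type non-vanishing results for quadratic twists of higher weight newforms. Applying~\cref{GZZKN} to $f^K$ gives $\corank_\cO\H^1_\BK(\Q,A_{f^K})=1-t$, and the $\Gal(K/\Q)$-eigenspace decomposition $\H^1_\BK(K,A_f)\isom\H^1_\BK(\Q,A_f)\oplus\H^1_\BK(\Q,A_{f^K})$ (valid up to $2$-torsion, innocuous since $p>2$) yields $\corank_\cO\H^1_\BK(K,A_f)=t+(1-t)=1$ in both cases. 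Next I invoke (IMC1): the modules $S=\H^1_{\cF_{\Lambda_K}}(K,\bT)$ and $\cX=\H^1_{\cF_{\Lambda_K}}(K,M_f)^\vee$ have $\Lambda_K$-rank $1$, with $\Char_{\Lambda_K}(\cX_\tors)=\Char_{\Lambda_K}(S/\Lambda_K\kappa_\infty)^2$, while~\cref{anticon} gives $\corank_\cO(\cX/(\gamma-1)\cX)=1$. Combined with the rank of $\cX$, this forces $(\cX_\tors)_{\Gamma_K}$ to be finite, whence $(\gamma-1)\nmid\Char(\cX_\tors)=\Char(S/\Lambda_K\kappa_\infty)^2$. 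The image of $\kappa_\infty$ under the analogous control map $S/(\gamma-1)S\to\H^1_f(K,T_f)$ is therefore non-torsion, and by construction is identified with $\sum_{[\fra]}\AJ^f_{K_1,\BDP}(\Delta_\fra^\BDP)$ as discussed at the end of~\cref{cyclecomp}.

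To conclude, I would propagate non-torsion through the cycle comparisons: by~\cref{compBPD} and~\cref{comp} the classes $C_1=\sum_{[\fra]}\AJ^f_{K_1}(\Delta_\fra)$, Masoero's cycle $\Delta_N$, and Zhang's cycle $S_{2r}(E_x)$ (via~\eqref{zhangcyc}) are all non-torsion; combined with the non-degeneracy of the Gillet--Soul\'e pairing and~\cref{GZZ} this gives $L'(f/K,r)\ne 0$. Writing $L(f/K,s)=L(f,s)L(f^K,s)$: in the $t=0$ case, $L(f^K,r)=0$ and $L'(f^K,r)\ne 0$ give $L'(f/K,r)=L(f,r)L'(f^K,r)\ne 0$, so $L(f,r)\ne 0$; in the $t=1$ case, the contrapositive of the analytic-rank-$0$ part of~\cref{GZZKN} applied to $f$ forces $L(f,r)=0$, and then $L(f^K,r)\ne 0$ together with $L'(f/K,r)=L'(f,r)L(f^K,r)\ne 0$ gives $L'(f,r)\ne 0$, hence $\ord_{s=r}L(f,s)=1$. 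The final statements about $\im(\AJ^f_\Q)$ and $\Sha_\Nek(f/\Q)$ then follow from~\cref{GZZKN} applied back to $f$. The main obstacle I anticipate is the clean identification of the trivial specialization of $\kappa_\infty$ with the BDP Heegner class and the transfer of non-torsion through~\cref{comp} and~\cref{compBPD}, since the $\Gamma(N)$ versus $\Gamma_1(N)$ dichotomy for Heegner cycles has no weight $2$ analogue in~\cite{CGLS}.
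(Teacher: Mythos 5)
Your proposal is correct and follows essentially the same route as the paper's proof: choose $K$ via non-vanishing of twists so that $\corank_{\bZ_p}\H^1_\BK(K,A_f)=1$, use \cref{anticon} and (IMC1) to force $\kappa_\infty$ to be non-torsion, transfer non-torsion through \cref{comp}, \cref{compBPD} and the identity $\AJ^f_{K_1}(\Delta_N)=\AJ^f_{K_1}(\tilde\Gamma)$ to Zhang's cycle, apply \cref{GZZ}, and factor $L(f/K,s)$. The only cosmetic differences are your citation of general non-vanishing results where the paper invokes \cite{FH95}, and your slightly more roundabout derivative computation in the $t=1$ case where the paper simply uses additivity of orders of vanishing.
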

\begin{proof}
	We will choose a suitable imaginary quadratic field $K$ where we obtain the anticyclotomic Iwasawa Main Conjectures, depending on $t\in\{0,1\}$. Let $f^K$ denote the twist of $f$ by $K$.
	
	We first assume $\corank_{\bZ_p}(\H^1_\BK(K,A_f))=1$. Choose an imaginary quadratic field $K$ such that\begin{enumerate}
		\item[(a)] $D_K<-4$ is odd,
		\item[(b)] every prime $\ell$ dividing $N$ splits in $K$,
		\item[(c)] $p$ splits in $K$, say $p=v\ol v$,
		\item[(d)] $L(f^K/\bQ,s)\ne 0$.
	\end{enumerate}
The existence of such $K$ (in fact, of an infinitude of them) is ensured by~\cite[Theorem B.1]{FH95}. Now by~\cref{GZZKN}, the last condition implies $\corank_{\bZ_p}(\H^1_\BK(\bQ,A_{f^K}))=0$ and therefore $\corank_{\bZ_p}(\H^1_\BK(K,A_f))=1$. From \cref{anticon}, this implies $\corank_{\bZ_p}(H^1_{\cF_{\Lambda_K}}(K,M_f))^{\Gamma_K})=1$. Now from~\cref{IMC}(IMC1), $(\cX_\tors)_{\Gamma_K}$ must be finite so $(\H^1_{\cF_{\Lambda_K}}(K,\bT)/\Lambda_K\cdot \kappa_\infty)_\Gamma$ must be finite as well, which implies that  $\kappa_\infty$ is non-torsion.

There is an injection $\H^1_{\cF_{\Lambda_K}}(K,\bT)_{\Gamma_K}\hookrightarrow \H^1_\BK(K,T_f)$ coming from the first cohomology of the short exact sequence\[
0\to \bT\xrightarrow{\cdot T}\bT\to T_f\to 0
.\]
It then follows that $\kappa_\infty$ and hence $\kappa_1$ has non-torsion projection in $\H^1_\BK(K,T_f)$, but by construction the projection of $\kappa_1$ is nothing but $\sum_{[\fra]\in\Pic(\cO_K)}\AJ^f_{K_1,\BDP}(\Delta_\fra^\BDP)$. By the discussion at the end of~\cref{cyclecomp}, this also means Zhang's cycle $S_{2r}(E_x)$ is non-torsion. Now~\cref{GZZ} implies $\ord_{s=r}L(f/K,s)=1$ (assuming non-degeneracy of the Gillet--Soul\'e pairing). Since $\ord_{s=r}L(f/K,s)=\ord_{s=r}L(f/\bQ,s)+\ord_{s=r}L(f^K/\bQ,s)$, it follows that $\ord_{s=r}L(f/\bQ)=1$. 

The rank $0$ case is completely analogous and we replace the condition (d) by\begin{itemize}
	\item[(d')] $\ord_{s=r}L(f^K,s)=1$.
	\end{itemize} 
The existence of infinitely many such $K$ follows from~\cite[Theorem B.2]{FH95} and by~\cref{GZZKN} again one has $\corank_{\bZ_p}(\H^1_\BK(K,A_f))=1$. By passing to Zhang's cycle and applying~\cref{GZZ}, one again gets $\ord_{s=r}L(f/K,s)=1$ which implies $L(f,r)\ne 0$.
\end{proof}

\subsection{Some discussion of the $p$-part of Tamagawa Number formula in rank $1$}\label{rank1}
Finally, we talk about some ingredients that might potentially yield a proof of the $p$-part of Tamagawa Number formula in rank $1$.

\textbf{Step 0: }We begin by recalling that, when the analytic rank is $1$, there is an identification $\Sha_\Nek(f/K)\isom\Sha_\BK(f/K)$ (see remarks after~\cref{KolyNek}). When $\Sha(f/K)[\frp^\infty]<\infty$, they are also identified with $\Sha(f/K)[\frp^\infty]$.

\textbf{Step 1: }As in the proof of~\cref{pTNC} in rank $1$ case, we choose a $K$ satisfying~\cref{assum} such that $L(f^K,r)\ne 0$. Thus $\ord_{s=r}L(f/K)=1$, and by~\cref{GZZKN} we have $\#\Sha(f/K)<\infty$. 

\textbf{Step 2: }From~\cref{IMC}(IMC2), there is a $p$-adic unit $u\in(\bZ_p^\ur)^\times$ for which\[
f^\Sigma_\ac(0)=u\cdot \cL_f^\BDP(0),\]
where $f^\Sigma_\ac$ is a generator of $\Char_{\Lambda_K}(X^\Sigma_\ac(M_f))=\Char_{\Lambda_K}(\fX_f)$.

\textbf{Step 3: }From~\cref{control}, taking $\omega=\omega_f$, there is an equality\begin{align*}
	\#\cO/f^\Sigma_\ac(0)=&\frac{\#\Sha(K,A_f)\cdot C^\Sigma(A_f)}{(\#\H^0(K,A_f))^2}\times
	\\&(\#\frac{(\cO_L:\cO_L\cdot log_{\omega_f}(\loc_{v_0}C))}{(\cO_L:\log_{\omega_f}(H^1_f(K_{v_0},T_f)/\tors))(H^1_f(K,T_f)_{/\tors}:\cO_L\cdot C)})^2.
\end{align*}

From~\cref{BDP}, there is an equality
\[\cL_f^\BDP(0)=\log_{\omega_f}(\loc_{v}C_1)^2/(-4D)^{r-1}\times (1-p^{-r}a_p(f)+p^{-1})^{2},\]
where $C_1$ is the \textit{classical Heegner point over $\Gamma_1(N)$} as in~\cref{BDP}.

From~\cref{LLZ}, taking $\omega=\omega_f$, there is an equality\[\ord_p(\cO_L:\log_\omega(\H^1_f(K_v,T)_{/\tors}))=\ord_p\bigl(\frac{\#\H^0(K_w,A_f)}{1-p^{-r}a_p(f)+p^{-1}}\bigr).\]

One would naturally hope to take $C=C_1$. Then one would get (up to a $p$-adic unit)\[
\frac{\#\Sha(K,A_f)\cdot \Tam(f/K)}{(\#\H^0(K,A_f))^2}=[\H^1_f(K,T_f)_{/\tors}:\cO_L\cdot C_1]^2.
\]
However, to understand the term on the right, a Gross--Zagier formula for $C_1$ is needed.
 
On the other hand, if we choose $C_N=\cores_{K_1/K}\AJ^f_{K_1}(\Delta_N)=\cores_{K_1/K}\AJ^f_{K_1}(\tilde{\Gamma})$ corresponding to a classical Heegner cycle over $\Gamma(N)$, then~\cref{GZZ} provides a desired description of $[\H^1_f(K,T_f)_{/\tors}:\cO_L\cdot C_N]$. However, it seems difficult to compare $[\H^1_f(K,T_f)_{/\tors}:\cO_L\cdot C_1]$ to $[\H^1_f(K,T_f)_{/\tors}:\cO_L\cdot C_N]$. One could again appeal to~\cref{comp} to relate the indices of the Heegner points in the Abel--Jacobi images. However, a direct comparison of $\im(\AJ^f_{K_1,1})$ and $\im(\AJ^f_{K_1})$ seems not easy.

\printbibliography

\end{document}